\numberwithin{equation}{section}
\newtheorem{thm}{Theorem}[section]
\newtheorem{lem}[thm]{Lemma}
\newtheorem{remark}[thm]{Remark}
\newtheorem{prop}[thm]{Proposition}
\newcommand{\real}{{\mathbb R}}
\newcommand{\ent}{{\mathbb Z}}
\newcommand{\com}{{\mathbb C}}
\newcommand{\norm}[1]{\left\Vert#1\right\Vert}
\newcommand{\abs}[1]{\left\vert#1\right\vert}
\renewcommand{\H}{{\mathcal H}}
\newcommand{\M}{{\mathcal M}}
\renewcommand{\P}{{\mathcal P}}
\renewcommand{\a}{\alpha}
\newcommand{\e}{\varepsilon}
\newcommand{\f}{\varphi}
\newcommand{\8}{\infty}
\newcommand{\red}[1]{\textcolor{red}{#1}}
\begin{document}

\title[Differential transforms for parabolic operators]
{Boundedness of differential transforms for fractional Poisson type operators generated by parabolic operators}

\thanks{{\it 2020 Mathematics Subject Classification:} Primary: 42B20, 42B25.}
\thanks{{\it Key words:} differential transforms, fractional Poisson operators, maximal operator.}
\thanks{Supported by the National Natural Science Foundation of China(Grant No. 11971431, 11401525), the Natural Science Foundation of Zhejiang Province (Grant No. LY18A010006) and the first Class Discipline of Zhejiang-A(Zhejiang Gongshang University-Statistics).}

\author{Chao Zhang}

 \address{School of Statistics and Mathematics \\
             Zhejiang Gongshang University \\
             Hangzhou 310018, People's Republic of China}
 \email{zaoyangzhangchao@163.com}

%
%\author{Jos\'e Luis Torrea}
%\address{Departamento de Matem\'aticas and ICMAT-CSIC-UAM-UCM-UC3M, Facultad de Ciencias, Universidad
%Aut\'onoma de Madrid, 28049 Madrid, Spain }
%\email{joseluis.torrea@uam.es}
%%\thanks{??}
%

\date{}
\maketitle

\begin{abstract}
In this paper we analyze the convergence of the following type of series
\begin{equation*}  T_N^\a f(x,t)=\sum_{j=N_1}^{N_2} v_j(\P_{a_{j+1}}^\a f(x,t)-\P_{a_j}^\a f(x,t)),\quad (x,t)\in \mathbb R^{n+1}, \ N=(N_1, N_2)\in \mathbb Z^2,\ \alpha>0,
\end{equation*}
where  $\{\P_{\tau}^\a \}_{\tau>0}$ is the fractional Poisson-type operators generated by the parabolic operator  $L=\partial_t-\Delta$ with $\Delta$ being the classical Laplacian,  $\{v_j\}_{j\in \mathbb Z}$  a bounded real sequences  and $\{a_j\}_{j\in \mathbb Z}$  an increasing real sequence.

Our analysis will consist  {of} the boundedness, in  $L^p(\mathbb{R}^n)$ and in  $BMO(\mathbb{R}^n)$,  of the operators
$T^{\alpha}_N$ and its maximal operator $ T^*f(x)= \sup_{N\in \mathbb Z^2} \abs{T^{\alpha}_N f(x)}.$

It is also shown that the local size of the maximal differential transform operators  is the same with the order  of a singular integral for functions $f$ having local support. Moreover, if $\{v_j\}_{j\in \mathbb Z}\in \ell^p(\mathbb Z)$, we get an intermediate size between the local size of singular integrals and Hardy-Littlewood maximal operator.
 \end{abstract}

\bigskip
%%%%%%%%%%%%%%%%%%%%%%%%%%%%%%%%%%%%%%%%%%%%%%%%%%%%%%%%%%%%%%%%%%%
%%%%%%%%%%%%%%%%%%%%%%%%%%%%%%%%%%%%%%%%%%%%%%%%%%%%%%%%%%%%%%%%%%%
\section{Introduction}
%%%%%%%%%%%%%%%%%%%%%%%%%%%%%%%%%%%%%%%%%%%%%%%%%%%%%%%%%%%%%%%%%%%
%%%%%%%%%%%%%%%%%%%%%%%%%%%%%%%%%%%%%%%%%%%%%%%%%%%%%%%%%%%%%%%%%%%

In this paper, we would like to develop the differential transform theory of some operators related with the operators which  is generated by the  parabolic operator $L=\partial_t -\Delta$. In this theory, the central role is the heat diffusion semigroup generated by $L$ that we denote by   $$e^{-\tau L}= e^{-\tau (\partial_t-\Delta)}= e^{-\tau\partial_t}\circ  e^{\tau \Delta},\quad\text{with } \tau>0.$$
Clearly, this semigroup can be given by an integral with a concrete kernel. That is, for any function ~$\f$ in Schwartz class $\mathcal S(\real^{n+1})$,
 \begin{align}\label{Formu:HeatPointwise}
 e^{-\tau (\partial_t-\Delta)}\varphi(x,t)=e^{\tau \Delta}\varphi(x,t-\tau)
                                             =\int_{\real^n} W(y,\tau)\varphi(x-y,t-\tau)dy,~(x,t)\in \real^{n+1},
\end{align}
where $W$ is the Gauss-Weierstrass kernel
  $$W(y,\tau)=\frac{1}{(4\pi \tau)^{n/2}} e^{-\frac{|y|^2}{4\tau}}.$$

\medskip

Let us  describe a kind of fractional Poisson formula, which will play a central role in our study. For $0<\a<1$, the \textit{fractional Poisson formula} of $f$ is given by
\begin{align}\label{Formu:GenPoisson}
\P_\tau^\a f(x)&=\frac{\tau^{2\alpha}}{4^\alpha\Gamma(\alpha)}
\int_0^\infty e^{-\tau^2/(4s)}e^{-sL} f(x)\,\frac{ds}{s^{1+\alpha}}\\
\nonumber &=\frac{1}{\Gamma(\alpha)}
\int_0^\infty e^{-r} e^{-\frac{\tau^2}{4r} L} f(x)\,\frac{d r}{r^{1-\alpha}}, \quad x\in\real^n, \ \tau>0.
\end{align}
This means that the fractional Poisson formula can be obtained via the heat semigroup $\displaystyle \{e^{-sL}\}_{s>0}$. The formula above can be  {deduced from} the theory of the spectral analysis. When $L=-\Delta$, Carffarelli and Silvestre \cite{CaffarelliSil} studied the fractional Poisson formula to solve an extension problems. Stinga and Torrea \cite{StingaTorreaExten} defined  this kind of Poisson formula for Hermite operator $L=-\Delta+|x|^2$. In the case $\a=1/2$, $\P_t^{1/2}$ is the Bochner subordinated Poisson semigroup, see \cite{SteinTopic}.

We will mainly study some properties of the differential transforms  associated to the operators generated by the parabolic operator. Let $\{a_j\}_{j\in \ent}$ be an increasing sequence of positive real numbers, and  $\{v_j\}_{j\in \ent}$ be a bounded sequence of real numbers. With this two sequences, we consider the differential transform
\begin{equation*}
 T  f =\sum_{j\in \ent} v_j(T_{a_{j+1}} f -T_{a_j} f),
\end{equation*}for   operators $\{T_s\}_{s>0}$.
In  this paper, we will study the properties of the differential transforms related to the fractional Poisson type operators  $\{\P^\a_{\tau} \}_{\tau>0}$ defined as (\ref{Formu:GenPoisson}). Moreover, combining (\ref{Formu:HeatPointwise}) and (\ref{Formu:GenPoisson}),  we have the pointwise formula as follows:
\begin{align*}%\label{Formu:subordination}
 \P_\tau^\a f(x,t) =\frac{\tau^{2\alpha}}{4^\alpha\Gamma(\alpha)}  \int_0^\infty \int_{\real^n}
           \frac{e^{-(\tau^2+|y|^2)/(4 s)}}{(4\pi s)^{n/2}} f (x-y,t-s)dy \frac{ds}{s^{1+\a}},
\end{align*}

\begin{remark}\label{Rem:FourierPoisson}
The Fourier transform $(\P_\tau^\a f)^{\wedge}$ can be characterized by the Bessel function or Macdonald function,  which is defined for arbitrary $\nu$ and $z\in\com$, see \cite{Lebedev}.  But in order to avoid complicated notations, whenever we have to deal with Fourier transform  {of} this kind fractional functions, we would compute directly the integrals by using the complex analysis.
\end{remark}

\medskip

Jones and Rosenblatt \cite{JR} studied the behavior of the series of the differences of ergodic averages and the differentials of differentiation operators along lacunary sequences in the context of the~$L^p$ spaces. In  \cite{BLMMDT}, the authors solved these problems with a different approach, which relied heavily on the method of Calder\'on-Zygmund singular integrals (see \cite{RubioRuTo}). In \cite{ZMT}, the authors proved the boundedness of the above operators related with the one-sided fractional Poisson type operator sequence. And in \cite{ZT}, the authors proved the boundedness of the differential transforms related to the heat semigroups generated by the Laplacian and Schr\"odinger operators.   We will focus on this kind of problems with the fractional Poisson type operators generated by the parabolic operator $L=\partial_t -\Delta.$

Let $\{a_j\}_{j\in \ent}$ be an increasing sequence of positive real numbers, and $\{v_j\}_{j\in \ent}$ be a bounded sequence of real  numbers. We consider the series
\begin{equation*}\label{Formu:SquareFun}
 \sum_{j\in \ent} v_j(\P_{a_{j+1}}^\a f(x,t)-\P_{a_j}^\a f(x,t)).
\end{equation*}
For each $N=(N_1,N_2)\in \ent^2$ with $N_1<N_2,$ we define the sum
\begin{equation}\label{Formu:FinSquareFun}
 T_N^\a f(x,t)=\sum_{j=N_1}^{N_2} v_j(\P_{a_{j+1}}^\a f(x,t)-\P_{a_j}^\a f(x,t)).
\end{equation}
Then, we have the following formula
\begin{align*}
&T_N^{\a} f(x,t) =\sum_{j=N_1}^{N_2} v_j(\P_{a_{j+1}}^\a f(x,t)-\P_{a_j}^\a f(x,t)) \\
     &= \frac{1}{4^\alpha\Gamma(\alpha)}  \sum_{j=N_1}^{N_2}v_j \int_{\real^n}\int_0^\8 \frac{ a_{j+1}^{2\a}e^{-a_{j+1}^2/(4 s)}-a_j^{2\a} e^{-a_j^2/(4 s)}        }{s^{1+\a}} \frac{e^{-|y|^2/(4s)}}{(4\pi s)^{n/2}} f(x-y,t-s)~ds dy \\
     &= \frac{1}{4^\alpha\Gamma(\alpha)} \int_{\real^{n+1}_+} \sum_{j=N_1}^{N_2}v_j \frac{ a_{j+1}^{2\a}e^{-a_{j+1}^2/(4 s)}-a_j^{2\a} e^{-a_j^2/(4 s)}        }{s^{1+\a}} \frac{e^{-|y|^2/(4s)}}{(4\pi s)^{n/2}} f(x-y,t-s)~ds dy.
\end{align*}
We denote the kernel of $T_N^\alpha$ by
$$K_N^\a(y,s)=\frac{1}{4^\alpha\Gamma(\alpha)} \sum_{j=N_1}^{N_2}v_j \frac{ a_{j+1}^{2\a}e^{-a_{j+1}^2/(4 s)}-a_j^{2\a} e^{-a_j^2/(4 s)}}{s^{1+\a}} \frac{e^{-|y|^2/(4s)}}{(4\pi s)^{n/2}}.$$
Note that the above function $K_N^\a(y,s)$ is defined for  $(y,s)\in \real^{n+1}_+$. But we can extend  $K_N^\a$ to the whole space $\real^{n+1}$. Indeed, we  observe that for any $y\in \real^n$, $K_N^\a(y,s)\to 0$ as $s\to 0^+$. Thus we can define $K_N^\a(y,s)=0$, for $s\leq 0$ and $y\in \real^n$.

\medskip
 In order to get the almost everywhere convergence of the series (\ref{Formu:FinSquareFun}), it is natural to study the corresponding maximal operator
\begin{equation*}%\label{Formu:MaxSquareFun}
 T^*f(x,t)=\sup_N \abs{T_N^\alpha f(x,t)}, \quad (x,t)\in\real^{n+1},
\end{equation*}
where the supremum is taken over all $N=(N_1,N_2)\in \ent^2$ with $N_1< N_2$.

 Some of our results will be valid when the sequence  $\{a_j\}_{j\in \mathbb Z}$ is lacunary.  It means that there exists a $\rho >1$ such that $\displaystyle \frac{a_{j+1}}{a_j} \ge \rho, \, j \in \mathbb{Z}$. In particular, we shall prove  the boundedness of the operators $T^*$  in the weighted spaces
$L^p(\mathbb R^{n+1}, \omega),$ where $\omega$ is  {a} usual Muckenhoupt weights on $\mathbb R^{n+1}$. We refer the reader to the book by J. Duoandikoetxea \cite[Chapter 7]{Duo} for definition and properties of the $A_p$ classes.
And, we have the following results.

\begin{thm}\label{Thm:LpBoundOsci}
For the operator $T^*$, we have the following statements.
\begin{enumerate}[(a)]
    \item For any $1<p<\infty$ and $\omega\in A_p(\real^{n+1})$,  there exists a constant $C$ depending  on $n, \rho, \omega, \alpha, p$ and $\norm{v}_{\ell^\infty(\mathbb Z)}$ such that
 $$\norm{T^*f}_{L^p(\mathbb R^{n+1}, \omega)}\leq C\norm{f}_{L^p(\mathbb R^{n+1}, \omega)},$$
 for all functions $f\in L^p(\real^{n+1}, \omega).$
    \item For any  $\omega\in A_1(\real^{n+1})$, there exists a constant $C$ depending  on $n, \rho, \omega, \alpha$ and $\norm{v}_{\ell^\infty(\mathbb Z)}$ such that
 $${\omega\left(\{(x,t)\in \real^{n+1}:\abs{T^*f(x,t)}>\lambda\}\right)} \le C\frac{1}{\lambda}\norm{f}_{L^1(\mathbb R^{n+1}, \omega)}, \quad \lambda>0,$$
for all functions $f\in L^1(\real^{n+1}, \omega).$
    \item Given $f\in L^\infty(\real^{n+1}),$ then either $T^* f(x,t) =\infty$ for all $(x,t)\in \mathbb R^{n+1}$, or $T^* f(x, t) < \infty$ for $a. e.$  $(x,t)\in \mathbb R^{n+1}$. And in this {latter} case, there exists a constant $C$ depending  on $n, \rho$, $\alpha$ and  $\norm{v}_{\ell^\infty(\mathbb Z)}$ such that
        \begin{equation*}\norm{T^*f}_{BMO(\mathbb R^{n+1})}\leq C\norm{f}_{L^\infty(\mathbb R^{n+1})}.
        \end{equation*}
\item Given $f\in BMO(\real^{n+1}),$ then either $T^* f(x, t) =\infty$ for all $(x,t)\in \mathbb R^{n+1}$, or $T^* f(x, t) < \infty$ for $a. e.$  $(x,t)\in \mathbb R^{n+1}$.  And in this {latter} case,  there exists a constant $C$ depending  on $n, \rho$, $\alpha$ and  $\norm{v}_{\ell^\infty(\mathbb Z)}$ such that
\begin{equation}\label{sharp}\norm{T^*f}_{ BMO(\mathbb R^{n+1})}\leq C\norm{f}_{BMO(\mathbb R^{n+1})}.
\end{equation}
\end{enumerate}
\end{thm}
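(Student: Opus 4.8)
The plan is to treat $T_N^\a$ as a family of Calder\'on--Zygmund operators on $\real^{n+1}$ with kernels $K_N^\a(y,s)$ and to run the now-standard machinery (as in \cite{BLMMDT}, \cite{ZMT}, \cite{ZT}): one first establishes that the vector-valued kernel $\{K_N^\a\}_N$ satisfies a uniform size estimate and H\"ormander-type smoothness estimates with constants independent of $N$, then invokes the vector-valued Calder\'on--Zygmund theory to obtain the $L^p(\omega)$ and weak-$(1,1)$ bounds for $T^*$, and finally handles the $BMO$ endpoint by the usual duality/good-$\lambda$ or direct oscillation argument. For the present statement (d), assuming parts (a)--(c) of Theorem~\ref{Thm:LpBoundOsci} are already in hand, the key new point is the pointwise control of the oscillation of $T^* f$ over cubes when $f\in BMO(\real^{n+1})$.

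Concretely, I would fix a parabolic cube $Q=Q(x_0,t_0,r)\subset\real^{n+1}$ (using the parabolic metric $\abs{x}+\abs{t}^{1/2}$ adapted to $L=\partial_t-\Delta$) and split $f = (f-f_Q)\chi_{2Q} + (f-f_Q)\chi_{(2Q)^c} + f_Q =: f_1+f_2+f_3$. Since each $\P_\tau^\a$ (hence each $T_N^\a$, being a telescoping difference) annihilates constants --- this follows from $\int_{\real^n}W(y,\tau)\,dy=1$ together with the integral representation of $K_N^\a$, so that $\int_{\real^{n+1}}K_N^\a(y,s)\,dy\,ds$ is controlled and the constant part contributes nothing to the oscillation --- we may discard $f_3$. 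For $f_1$, one uses part (a) with $\omega\equiv1$: by the $L^2$ boundedness of $T^*$ and John--Nirenberg,
\begin{equation*}
\frac{1}{\abs{Q}}\int_Q \abs{T^* f_1(x,t)}\,dx\,dt \le \Big(\frac{1}{\abs{Q}}\int_Q \abs{T^* f_1}^2\Big)^{1/2}\le \frac{C}{\abs{Q}^{1/2}}\norm{f_1}_{L^2}\le C\norm{f}_{BMO}.
\end{equation*}
For $f_2$, the main work is to show that for $(x,t)\in Q$ the quantity $T^* f_2(x,t)$ differs from the constant $T^* f_2(x_0,t_0)$ (or from some fixed constant) by at most $C\norm{f}_{BMO}$; this requires the H\"ormander regularity of the kernel, namely
\begin{equation*}
\int_{\abs{(y,s)}\ge 2\abs{(z,w)}}\sup_N\abs{K_N^\a(y-z,s-w)-K_N^\a(y,s)}\,dy\,ds \le C,
\end{equation*}
combined with the standard telescoping estimate $\abs{f_{2^{k+1}Q}-f_Q}\le Ck\norm{f}_{BMO}$ summed against the decay $2^{-k\delta}$ coming from the smoothness exponent. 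The dichotomy ``either $T^*f\equiv\infty$ or $T^*f<\infty$ a.e.'' is obtained as usual: if $T^* f$ is finite at one point, the above decomposition shows it is finite (and locally integrable) on every cube, hence a.e.; otherwise the tail $f_2$ already forces $T^* f\equiv\infty$.

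The main obstacle --- and the only genuinely operator-specific computation --- is verifying the uniform size and H\"ormander estimates for $K_N^\a$. The delicate factor is $\sum_{j=N_1}^{N_2} v_j\big(a_{j+1}^{2\a}e^{-a_{j+1}^2/(4s)}-a_j^{2\a}e^{-a_j^2/(4s)}\big)$: one writes each difference as $\int_{a_j^2}^{a_{j+1}^2}\partial_u\big(u^{\a}e^{-u/(4s)}\big)\,du$, bounds $\abs{\partial_u(u^{\a}e^{-u/(4s)})}\le C u^{\a-1}e^{-u/(8s)}$, and uses $\norm{v}_{\ell^\infty}$ together with the disjointness of the intervals $(a_j^2,a_{j+1}^2)$ to collapse the sum into a single integral $\int_0^\infty u^{\a-1}e^{-u/(8s)}\,du \sim s^{\a}$, which exactly cancels the $s^{-1-\a}$ in front; the Gaussian factor $\frac{e^{-\abs{y}^2/(4s)}}{(4\pi s)^{n/2}}$ then gives, after integrating in $s$, the expected $\abs{(y,s)}^{-(n+2)}$-type parabolic Calder\'on--Zygmund size. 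The smoothness estimates in $y$ and in $s$ follow the same route after differentiating the Gaussian (respectively the full kernel) and keeping track that the extra derivative costs one power of the parabolic distance; here one must be slightly careful near $s=0$, which is where the remark that $K_N^\a(y,s)\to0$ as $s\to0^+$ and the zero-extension are used. No lacunarity of $\{a_j\}$ is needed for (d) (it enters only the weighted statements where a different argument with $A_p$ weights is invoked); the bound is uniform in $N$ precisely because all estimates above only used $\norm{v}_{\ell^\infty}$ and the telescoping-into-an-integral trick.
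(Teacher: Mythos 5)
Your treatment of part (d) is essentially the paper's own argument: decompose $f=f_1+f_2+f_3$ with $f_3=f_{\tilde B}$ killed because $\mathcal P_\tau^\alpha$ \emph{preserves} constants (not ``annihilates'' --- it is $T_N^\alpha$ that annihilates them, by telescoping), bound $T^*f_1$ via the $L^2$ bound of $T^*$ plus H\"older/John--Nirenberg, and bound the oscillation of $T^*f_2$ via the kernel smoothness estimates and the telescoping growth of $\abs{f_{2^kQ}-f_Q}$. That matches Section~\ref{Sec:Lp}.

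However, there is a genuine gap in your plan for part (a), which is the part everything else hangs on. You propose to establish uniform size and H\"ormander estimates for the $\ell^\infty(\mathbb Z^2)$-valued kernel $\{K_N^\alpha\}_N$ and then ``invoke vector-valued Calder\'on--Zygmund theory to obtain the $L^p(\omega)$ and weak-$(1,1)$ bounds for $T^*$.'' But vector-valued Calder\'on--Zygmund theory requires as a \emph{hypothesis} that the operator $\mathcal T f=\{T_N^\alpha f\}_N$ be bounded from $L^{p_0}(\real^{n+1})$ into $L^{p_0}_{\ell^\infty}(\real^{n+1})$ for some $p_0$, i.e.\ that $T^*$ already be bounded on some $L^{p_0}$. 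This is exactly the content of part (a) and does \emph{not} follow from the uniform $L^2$ bound $\sup_N\norm{T_N^\alpha f}_{L^2}\le C\norm{f}_{L^2}$ of Proposition~\ref{Thm:L2Estimate}, since a supremum does not commute with $\sup_N$. The paper closes this circle with a Cotlar-type inequality (Theorem~\ref{Thm:Maximalcontrol}),
\[
T_M^*f(x,t)\le C\Bigl\{(\M^-\circ\M)\bigl(T^\alpha_{(-M,M)}f\bigr)(x,t)+(\M_q^-\circ\M_q)f(x,t)\Bigr\},
\]
proved via a pointwise decomposition $f=f\chi_{\tilde B_m}+f\chi_{B_m^c\times(-\infty,-a_m^2)}+f\chi_{B_m^c\times(0,\infty)}$ and the lacunary sum estimates of Lemma~\ref{lem:cotlar}. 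Without this (or a substitute), your derivation of (a) does not go through, and (b)--(d) all depend on (a). In particular, your claim that ``no lacunarity of $\{a_j\}$ is needed for (d)'' is not right: the $L^2$ bound of $T^*$ that you yourself use to control $T^*f_1$ comes from (a), whose proof requires the Cotlar inequality and hence the lacunarity assumption (after the normalization of Proposition~\ref{Prop:lacunary}). The kernel estimates of Proposition~\ref{Thm:KernelEst} indeed do not need lacunarity, but they only yield uniform bounds for each $T_N^\alpha$, not for the maximal operator.
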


Then, by the theorems above, we can get the {\it{a.e.}} convergence result as follows.
\begin{thm}\label{Thm:ae}
\begin{enumerate}[(a)]
    \item If $1<p<\infty$ and $\omega\in A_p(\real^{n+1})$, then $T_N^\alpha f$ converges {\it{a.e.}} and in $L^p(\mathbb R^{n+1}, \omega)$ norm for all $f\in L^p(\mathbb R^{n+1}, \omega)$ as $N=(N_1,N_2)$ tends to $(-\infty, +\infty).$
    \item If $p=1$ and $\omega\in A_1(\real^{n+1})$, then $T_N^\alpha f$ converges {\it{a.e.}} and in measure for all $f\in L^1(\mathbb R^{n+1}, \omega)$ as $N=(N_1,N_2)$ tends to $(-\infty, +\infty).$
\end{enumerate}
\end{thm}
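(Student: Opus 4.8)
The plan is the classical one: deduce pointwise convergence from the maximal inequality of Theorem~\ref{Thm:LpBoundOsci} after establishing convergence on a dense class. I split it into three steps.

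\textbf{Step 1 (convergence on a dense class).} Take $\mathcal D=C_c^\infty(\mathbb R^{n+1})$, which is dense in $L^p(\mathbb R^{n+1},\omega)$ for every $\omega\in A_p$ and $1\le p<\infty$. For $f\in\mathcal D$ I claim the series $\sum_{j\in\mathbb Z}v_j\big(\P_{a_{j+1}}^\a f(x,t)-\P_{a_j}^\a f(x,t)\big)$ converges absolutely at every point, uniformly on compact sets. Write $D_jf=\P_{a_{j+1}}^\a f-\P_{a_j}^\a f=\int_{a_j}^{a_{j+1}}\partial_\tau\P_\tau^\a f\,d\tau$; since $|v_j|\le\norm{v}_{\ell^\infty(\mathbb Z)}$ and the intervals $[a_j,a_{j+1}]$, $j\in\mathbb Z$, have disjoint interiors with union inside $(0,\infty)$, it is enough to prove, on each compact $K\subset\mathbb R^{n+1}$, a bound of the form
$$\sup_{(x,t)\in K}\big|\partial_\tau\P_\tau^\a f(x,t)\big|\le C_K\,\tau^{2\a-1}e^{-c_K\tau^2}\qquad(\tau>0),$$
whose right-hand side is integrable over $(0,\infty)$ exactly because $\a>0$. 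To get it, differentiate \eqref{Formu:GenPoisson} in $\tau$, commute $L$ through the heat semigroup, and substitute $s=\tau^2/(4r)$ to obtain
$$\partial_\tau\P_\tau^\a f=-\frac{4^{1-\a}\,\tau^{2\a-1}}{2\Gamma(\a)}\int_0^\infty e^{-\tau^2/(4s)}\,e^{-sL}(Lf)\,\frac{ds}{s^{\a}};$$
now $Lf=\partial_t f-\Delta f\in C_c^\infty$ is supported in some slab $\{|t|\le R\}$, and the kernel identity $e^{-sL}(Lf)(x,t)=\int_{\mathbb R^n}W(y,s)(Lf)(x-y,t-s)\,dy$ shows $e^{-sL}(Lf)(x,t)=0$ whenever $s>|t|+R$, so that for $(x,t)\in K$ the integral runs over $0<s\le|t|+R$, on which $e^{-\tau^2/(4s)}\le e^{-\tau^2/(4(|t|+R))}$; estimating the rest crudely by $\norm{Lf}_{L^\infty}$ yields the displayed bound. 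Consequently $T_N^\a f(x,t)=\sum_{j=N_1}^{N_2}v_jD_jf(x,t)$ converges at every $(x,t)$, uniformly on compact sets, as $N\to(-\infty,+\infty)$.

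\textbf{Step 2 ($a.e.$ convergence for general $f$).} For $f\in L^p(\mathbb R^{n+1},\omega)$ define the oscillation
$$\Omega f(x,t)=\limsup_{M,N\to(-\infty,+\infty)}\big|T_M^\a f(x,t)-T_N^\a f(x,t)\big|\in[0,\infty];$$
it is subadditive in $f$, satisfies $\Omega f\le 2\,T^*f$, and vanishes exactly where the net $\{T_N^\a f(x,t)\}$ converges in $\mathbb R$. By Step~1, $\Omega g\equiv0$ for $g\in\mathcal D$, whence $\Omega f=\Omega(f-g)\le 2\,T^*(f-g)$ for every $g\in\mathcal D$. When $1<p<\infty$, Theorem~\ref{Thm:LpBoundOsci}(a) gives, for each $\lambda>0$,
$$\omega\big(\{(x,t):\Omega f(x,t)>\lambda\}\big)\le\omega\big(\{2\,T^*(f-g)>\lambda\}\big)\le\frac{C}{\lambda^{p}}\,\norm{f-g}_{L^p(\mathbb R^{n+1},\omega)}^{p},$$
and since $\norm{f-g}_{L^p(\mathbb R^{n+1},\omega)}$ can be taken arbitrarily small, $\omega(\{\Omega f>\lambda\})=0$ for all $\lambda>0$; thus $\Omega f=0$ $\omega$-a.e., i.e.\ $T_N^\a f$ converges $\omega$-a.e. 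For $p=1$ one argues identically, using the weak type $(1,1)$ estimate of Theorem~\ref{Thm:LpBoundOsci}(b) in place of the strong bound.

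\textbf{Step 3 (mode of convergence) and the main difficulty.} Let $\widetilde Tf$ be the $a.e.$ limit from Step~2. For $1<p<\infty$ we have $|T_N^\a f|\le T^*f\in L^p(\mathbb R^{n+1},\omega)$ by Theorem~\ref{Thm:LpBoundOsci}(a), so dominated convergence upgrades the $a.e.$ convergence to convergence in $L^p(\mathbb R^{n+1},\omega)$. For $p=1$, convergence in measure follows from the $a.e.$ convergence and the weak $(1,1)$ bound of Theorem~\ref{Thm:LpBoundOsci}(b) by the usual splitting: given $\varepsilon,\delta>0$ pick $g\in\mathcal D$ with $\norm{f-g}_{L^1(\mathbb R^{n+1},\omega)}$ small, note that $T_N^\a g$ is already uniformly Cauchy on compact sets by Step~1, and control $T_N^\a(f-g)$ uniformly in $N$ by $T^*(f-g)$, whose weak $(1,1)$ bound absorbs both the error term and the tails. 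The substantive point in the whole argument is the pointwise decay estimate for $\partial_\tau\P_\tau^\a f$ on the dense class in Step~1 — in particular its $\tau\to0^+$ behaviour, which is where the parabolic (anisotropic) scaling and the condition $\a>0$ enter; the required kernel bounds are of the same type as those already used for the boundedness of $T^*$, so in practice Step~1 is short. Everything after Step~1 is the routine transference machinery for passing from a maximal inequality to almost everywhere convergence.
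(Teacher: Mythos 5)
Your proof is correct, and it follows the same overall skeleton as the paper's (convergence on a dense class, then $a.e.$ convergence via the maximal inequality, then upgrading the mode of convergence), but the dense-class argument in Step~1 is genuinely different. The paper estimates the two tails of $\sum_j v_j(\P_{a_{j+1}}^\a\varphi-\P_{a_j}^\a\varphi)$ directly: the large-$j$ tail is controlled by the sup-norms of the individual Poisson kernels, which scale like $a_j^{-(n+2)}$, paired with $\norm{\varphi}_{L^1}$; the small-$j$ tail exploits the constancy of the kernel's total mass to replace $\varphi(x-y,t-s)$ by $\varphi(x-y,t-s)-\varphi(x,t)$ and then estimates via $\norm{\nabla\varphi}_\infty$, with a case split between $0<\a\le 1/2$ and $1/2<\a<1$. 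You instead differentiate the subordination formula to obtain
\[
\partial_\tau\P_\tau^\a f \;=\; -\frac{4^{1-\a}\tau^{2\a-1}}{2\Gamma(\a)}\int_0^\infty e^{-\tau^2/(4s)}\,e^{-sL}(Lf)\,\frac{ds}{s^\a},
\]
and use a structural fact the paper's proof does not invoke: since $e^{-sL}$ evaluates $Lf$ at time $t-s$ with $s>0$ and $Lf$ has compact support in $t$, the quantity $e^{-sL}(Lf)(x,t)$ vanishes for $s$ beyond $|t|$ plus the temporal width of $\operatorname{supp}f$. This truncates the $s$-integral to a bounded range uniformly for $(x,t)$ in a compact set $K$ and yields the single estimate $|\partial_\tau\P_\tau^\a f(x,t)|\le C_K\,\tau^{2\a-1}e^{-c_K\tau^2}$, integrable on $(0,\infty)$ because $0<\a<1$. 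Both tails of the series then vanish simultaneously, with no case analysis on $\a$ and no cancellation trick; your route isolates the role of the one-sidedness of the parabolic heat propagation, whereas the paper's route is more elementary in that it reuses only the kernel estimates already established for the $L^p$ theory. Steps~2 and~3 are the standard oscillation/transference machinery and agree with the paper's, which itself dispatches the passage to norm convergence and to convergence in measure in a single sentence; your closing remark about ``absorbing the tails'' is no less explicit than the paper on that last, routine point.
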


\iffalse
Also, we can get the following weak type$(1,1)$ boundedness for $T^*.$
\begin{thm}\label{Thm:weak11}
 For any $\lambda>0$, there exists a constant $C$ such that
 $$\abs{\{(x,t)\in \real^{n+1}:\abs{T^*f(x,t)}>\lambda\}} \le C\frac{1}{\lambda}\norm{f}_{L^1},$$
for all functions $f\in L^1(\real^{n+1}).$
\end{thm}
\fi

\medskip

At last, we will give the $L^\infty$ results of the differential transform   associated to the fractional Poisson type operators.
By Theorem \ref{Thm:LpBoundOsci},   $T^*$ is  bounded from $L^\infty(\real^{n+1})$ to $BMO(\real^{n+1})$  when $T^*f(x,t)<\infty$ $a.e. (x, t)\in \real^{n+1}$. We will give an example to see that,  there exists $f\in L^\infty(\mathbb{R}^{n+1})$ such that $T^*(f)=\infty$ $a.e$. Moreover, we can give the characterization of the local growth of the operator $T^*(f)$ in $L^\infty(\mathbb R^{n+1})$. These results are presented in Theorems \ref{Thm:LinfinityI} and \ref{Thm:GrothLinfinity}.

 \begin{thm}\label{Thm:LinfinityI}
 There exist bounded sequence $\{v_j\}_{j\in \mathbb Z}$, $\rho$-lacunary sequence $\{a_j\}_{j\in \mathbb Z}$  and $f\in L^\infty(\mathbb R^{n+1})$ such that  $T^* f(x,t) =\infty$ for all $(x,t)\in \mathbb R^{n+1}$.
\end{thm}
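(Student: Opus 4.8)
The plan is to reduce the $(n+1)$--dimensional problem to a one--dimensional one in the time variable, build an explicit bad datum there, and then use the dichotomy of Theorem~\ref{Thm:LpBoundOsci}(c) to upgrade ``divergence on a set of positive measure'' to ``divergence everywhere''. First note that if $f$ depends only on $t$, say $f(x,t)=g(t)$ with $g\in L^\infty(\real)$, then the $y$--integral in the kernel $K_N^\a$ can be carried out ($\int_{\real^n}(4\pi s)^{-n/2}e^{-|y|^2/(4s)}\,dy=1$), giving
\begin{equation*}
 T_N^\a f(x,t)=\int_0^\infty\Psi_N(s)\,g(t-s)\,ds=:S_N g(t),\qquad
 \Psi_N=\sum_{j=N_1}^{N_2}v_j\,\psi_j,
\end{equation*}
where $\psi_j(s)=\frac{1}{4^\a\Gamma(\a)}\,\frac{a_{j+1}^{2\a}e^{-a_{j+1}^2/(4s)}-a_j^{2\a}e^{-a_j^2/(4s)}}{s^{1+\a}}$ for $s>0$ and $\psi_j(s)=0$ for $s\le0$. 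In particular $T^*f(x,t)=\sup_N\abs{S_N g(t)}$ does not depend on $x$. Hence it suffices to exhibit a bounded $\{v_j\}$, a lacunary $\{a_j\}$ and $g\in L^\infty(\real)$ with $\sup_N\abs{S_N g(t)}=\8$ for every $t$ in some interval $J$: then $f(x,t)=g(t)\in L^\infty(\real^{n+1})$ satisfies $T^*f=\8$ on $\real^n\times J$, a set of positive measure, so $T^*f$ is not finite a.e., and by the dichotomy in Theorem~\ref{Thm:LpBoundOsci}(c) we conclude $T^*f=\8$ on all of $\real^{n+1}$.

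For the one--dimensional construction I would take $v_j=(-1)^j$ and $a_j=\rho^j$ with $\rho=\rho(\a)>1$ chosen large. Elementary estimates (or the complex--analytic route of Remark~\ref{Rem:FourierPoisson}) give $\int_0^\infty\psi_j=\P_{a_{j+1}}^\a 1-\P_{a_j}^\a 1=0$; that $\psi_j<0$ for small $s$ and $\psi_j>0$ for large $s$, with a single sign change near $s\sim a_j^2$; that $\psi_j$ is concentrated on $s\sim a_j^2$, with an exponentially small left tail on $\set{s<a_j^2/R}$ and a right tail of total mass $O(R^{-\a})$ on $\set{s>Ra_j^2}$; and that $c_0\le\norm{\psi_j}_{L^1(\real)}\le C_0$ and $\norm{\psi_j}_{L^\infty}\lesssim a_j^{-2}$. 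Taking $\rho$ large makes the $\psi_j$, scale by scale, essentially non--overlapping, so that for $N=(P,Q)$ one obtains the crucial lower bound $\norm{\Psi_N}_{L^1(\real)}\gtrsim Q-P$, while $\Psi_N$ changes sign only $O(Q-P)$ times, the sign changes being located at scales $a_k^2$, $P\le k\le Q$, hence separated by at least $\gtrsim a_P^2$; moreover $\norm{\Psi_N}_{L^\infty}\lesssim a_P^{-2}$.

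Now fix $J=[0,1]$ and choose inductively blocks $N^{(K)}=(P_K,Q_K)$ so that: (i) $a_{P_1}$ is large enough that the sign changes of each $\Psi_{N^{(K)}}$ are mutually at distance $\ge 10$ and $\norm{\Psi_{N^{(K)}}}_{L^\infty}$ is small; (ii) $Q_K-P_K$ is large enough that $\tfrac12 c\,(Q_K-P_K)\ge K+C$; (iii) $P_{K+1}$ is so large that the ``input windows'' $W_K:=\set{t-s:\ t\in J,\ s\ \text{in the core of}\ \Psi_{N^{(K)}}}$ are pairwise disjoint subsets of $(-\8,0)$. On $W_K$ set $g(u)=\mathrm{sgn}\,\Psi_{N^{(K)}}(-u)$, and $g\equiv0$ off $\bigcup_KW_K$; then $\norm{g}_{L^\infty}\le1$. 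Writing $E_t=\set{s>0:\ \mathrm{sgn}\,\Psi_{N^{(K)}}(s)\ne\mathrm{sgn}\,\Psi_{N^{(K)}}(s-t)}$, for every $t\in J$ one gets
\begin{equation*}
 \abs{S_{N^{(K)}}g(t)}\ \ge\ \norm{\Psi_{N^{(K)}}}_{L^1(\real)}-2\int_{E_t}\abs{\Psi_{N^{(K)}}(s)}\,ds-C\ \ge\ K,
\end{equation*}
because shifting the argument by $\abs{t}\le1$ across sign changes that are $\ge10$ apart flips the sign of $\Psi_{N^{(K)}}$ only on a set $E_t$ of measure $\lesssim Q_K-P_K$ on which $\abs{\Psi_{N^{(K)}}}\lesssim a_{P_K}^{-2}$, while the non--core tails contribute at most $C$ regardless of the values of $g$ there. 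Letting $K\to\8$ yields $\sup_N\abs{S_N g(t)}=\8$ for all $t\in[0,1]$, which is exactly what the reduction in the first paragraph requires.

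The main obstacle is the quantitative core of the last two paragraphs: proving the lower bound $\norm{\Psi_N}_{L^1}\gtrsim Q-P$ together with a sufficiently economical sign--change count, so that \emph{one single} $g$ forces $\abs{S_{N^{(K)}}g(t)}$ to be large \emph{simultaneously for all $t$ in a fixed interval $J$}, uniformly in $K$; this is what makes the divergence set have positive measure rather than being, say, a measure--zero dense $G_\delta$ (which is all a soft Baire--category argument would give, and which would not trigger the dichotomy). The remaining ingredients --- the reduction to a function of $t$ alone, the elementary properties of $\psi_j$, and the bookkeeping of disjoint windows and bounded tails --- are routine. Alternatively, after the reduction $S^*=\sup_N\abs{S_N\cdot}$ is precisely the maximal differential transform of the one--sided fractional Poisson operator, so the counterexample of \cite{ZMT} may be invoked verbatim in place of the explicit construction above.
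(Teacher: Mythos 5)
Your first step---observing that for $f(x,t)=g(t)$ the $y$--integral is trivial, so $T^*f$ is independent of $x$ and the whole problem reduces to the one--dimensional convolution $S_Ng(t)=\int_0^\infty\Psi_N(s)g(t-s)\,ds$---is correct and coincides with the change of variables the paper carries out in its proof. After that point your route diverges from the paper's, and the difference is material. The paper picks a single self--similar datum $g(t)=\sum_{k\in\ent}(-1)^k\chi_{(-a^{2k+1},-a^{2k}]}(t)$ together with $a_j=a^j$ (so $f(a^jx,a^{2j}t)=(-1)^jf(x,t)$); a change of variables then shows that $\P_{a_{j+1}}^\a f-\P_{a_j}^\a f$ at $(0,0)$ equals a fixed nonzero constant times $(-1)^{j+1}$, and choosing $a$ large enough (to beat the two tail integrals) makes this constant positive. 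With $v_j=(-1)^{j+1}$ every summand of $T_N^\a f(0,0)$ equals the same positive constant, and a dominated--convergence argument shows the same lower bound holds for all $j$ with $|t|/a^{2j}$ small---that is, for infinitely many $j$ for \emph{every} fixed $(x,t)$. Thus divergence at every point is obtained directly, without invoking the dichotomy in Theorem~\ref{Thm:LpBoundOsci}(c) and without any block construction.

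Your gliding--hump scheme (blocks $N^{(K)}$, $g$ chosen on disjoint windows to match $\operatorname{sgn}\Psi_{N^{(K)}}$, then the dichotomy to upgrade positive--measure divergence to everywhere--divergence) is a legitimate and logically sound alternative direction, and the use of part (c) of Theorem~\ref{Thm:LpBoundOsci} is not circular. However, the quantitative core---$\norm{\Psi_N}_{L^1}\gtrsim Q-P$, the sign--change count of order $Q-P$ with separations $\gtrsim a_P^2$, and the $O(1)$ bound on the tail contributions---is precisely the part you label as the ``main obstacle'' and do not establish. Those estimates are in fact provable once $\rho$ is taken large (the $\psi_j$ are then essentially $L^1$--disjoint, the forward tails are geometrically summable of order $R^{-\a}$, and $\norm{\Psi_N}_\infty\lesssim a_P^{-2}$), so the plan would go through, but as written the central inequality is missing, which makes the proposal incomplete. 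The paper's self--similar choice sidesteps this bookkeeping entirely: the lower bound per summand is a single fixed constant, determined once $a$ is chosen, and no accumulation of nearly--disjoint masses is required. Your final remark---that after the reduction one may simply invoke the one--sided counterexample of \cite{ZMT}---is correct and is, in substance, what the paper does.
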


And the  characterization of the local growth of the operator $T^*$ in $L^\infty(\real^{n+1})$ is as follows:
\begin{thm} \label{Thm:GrothLinfinity}
\begin{enumerate}[(a)]
\item  Let   $\{v_j\}_{j\in \mathbb Z}\in l^p(\mathbb Z)$ for some $1 \le p\le \infty.$ For every $f\in L^\infty(\mathbb{R}^{n+1})$ with support in the cylinder $\tilde B=B(0, 1)\times[-1, 1]\subset \real^{n+1}$, for any cylinder $\tilde B_r:=B(0,r)\times[-r,r]\subset \tilde B$ with $2r<1$, we have
    $$\frac{1}{|\tilde B_r|} \int_{\tilde B_r} \abs{T^* f (x, t)} dxdt\leq C\left(\log \frac{2}{r}\right)^{1/p'}\norm{v}_{l^p(\mathbb Z)}\|f\|_{L^\infty(\mathbb R^{n+1})}.$$
\item When $1< p<\infty$, for any $\varepsilon>0$, there exist a $\rho$-lacunary sequence  $\{a_j\}_{j\in \mathbb Z}$,  a sequence $\{v_j\}_{j\in \mathbb Z}\in \ell^p(\mathbb Z)$ and a function  $f\in L^\infty(\mathbb{R}^{n+1})$ with support in  $\tilde B,$ satisfying the following statement: for any cylinder $\tilde B_r\subset \tilde B$ with $2r<1$, we have
    $$\frac{1}{|\tilde B_r|} \int_{\tilde B_r} \abs{T^* f (x, t)} dxdt\geq C\left(\log \frac{2}{r}\right)^{1/(p-\varepsilon)'}\norm{v}_{l^p(\mathbb Z)}\|f\|_{L^\infty(\mathbb R^{n+1})}.$$
\item When $p=\infty,$ there exist  a $\rho$-lacunary sequence $\{a_j\}_{j\in\mathbb Z}$, a sequence $\{v_j\}_{j\in \mathbb Z}\in \ell^\infty(\mathbb Z)$ and $f\in L^\infty(\mathbb{R}^{n+1})$ with support in  $\tilde B,$ satisfying the following statement: for any cylinder $\tilde B_r\subset \tilde B$ with $2r<1$, we have
    $$\frac{1}{|B_r|} \int_{B_r} \abs{T^* f (x, t)} dxdt\geq C\left(\log \frac{2}{r}\right)\norm{v}_{\ell^\infty(\mathbb Z)}\|f\|_{L^\infty(\mathbb R^{n+1})}.$$
\end{enumerate}
In the statements above,  $\displaystyle p' = \frac{p}{p-1},$ and if $p=1$, $\displaystyle p'=\infty.$
\end{thm}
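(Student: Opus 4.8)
The plan is to reduce everything to understanding the kernel $K_N^\a$ and, in particular, the behavior of the single-difference kernels
$$\psi_j(y,s):=\frac{a_{j+1}^{2\a}e^{-a_{j+1}^2/(4s)}-a_j^{2\a}e^{-a_j^2/(4s)}}{s^{1+\a}}\cdot\frac{e^{-|y|^2/(4s)}}{(4\pi s)^{n/2}},$$
so that $K_N^\a=\tfrac{1}{4^\a\Gamma(\a)}\sum_{j=N_1}^{N_2}v_j\psi_j$. The first step is to record the size and cancellation estimates for $\psi_j$ already implicit in the proof of Theorem \ref{Thm:LpBoundOsci}: each $\psi_j$ is essentially supported, in the parabolic scaling, at scale $|y|^2+s\sim a_j^2$, satisfies $\|\psi_j\|_{L^1(\real^{n+1})}\le C$ uniformly in $j$ (using the lacunarity $a_{j+1}/a_j\ge\rho$ to control the difference quotient), and has a pointwise bound $|\psi_j(y,s)|\le C\,a_j^{-(n+2)}\,\Phi(\,(|y|^2+s)/a_j^2\,)$ for a fixed rapidly decreasing profile $\Phi$. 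Together with $\int\psi_j=0$ (which follows from differentiating the Poisson semigroup identity $\P^\a_\tau 1=1$ in $\tau$), these are the only kernel facts needed.

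For part (a), fix $f$ supported in $\tilde B$ with $\|f\|_\infty\le 1$ and a subcylinder $\tilde B_r$ with $2r<1$. For $(x,t)\in\tilde B_r$, split the sum defining $T_N^\a f(x,t)$ according to whether $a_j\lesssim r$ (call this part $I$), $r\lesssim a_j\lesssim 1$ (part $II$), or $a_j\gtrsim 1$ (part $III$). For $I$, since $f$ restricted near $(x,t)$ is still only $L^\infty$, one uses the $L^1$-normalization $\|\psi_j\|_1\le C$ and $\|v\|_{\ell^p}$; but the point is that only finitely many ``small scales'' contribute to a bounded overlap, and one gets a contribution bounded by $C\|v\|_{\ell^\infty}\le C\|v\|_{\ell^p}$ — this is the part that is controlled pointwise by the maximal function type estimate from Theorem \ref{Thm:LpBoundOsci} and does not produce the logarithm. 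For $III$, the far scales $a_j\gtrsim 1\gg\mathrm{diam}(\tilde B)$ see the whole of $f$ through a kernel of size $\lesssim a_j^{-(n+2)}$ acting on a function of mass $\lesssim 1$, and one sums $\sum_{a_j\ge 1}|v_j|a_j^{-(n+2)}\le C\|v\|_{\ell^p}$ by Hölder since the tail $\{a_j^{-(n+2)}\}$ is in $\ell^{p'}$ thanks to lacunarity. The logarithm comes entirely from $II$: there are $\sim\log(2/r)$ lacunary scales between $r$ and $1$, and on each the contribution is $\lesssim|v_j|$; applying Hölder in $j$ over these $\sim\log(2/r)$ indices gives the factor $\|v\|_{\ell^p}(\log(2/r))^{1/p'}$. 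Averaging the resulting pointwise bound over $\tilde B_r$ and taking the supremum over $N$ (the estimates being uniform in $N$) yields (a). For the $p=1$ endpoint one just uses $\sum_{r\le a_j\le 1}|v_j|\le\|v\|_{\ell^1}$ directly, which matches $(\log(2/r))^{0}$.

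For the lower bounds (b) and (c), the strategy is to \emph{engineer} a counterexample that saturates the upper bound. Choose $a_j=\rho^{j}$ and take $f$ to be (a smoothed version of) the indicator of $\tilde B$, or a suitable $\pm1$-valued function adapted to the parabolic annuli $\{|y|^2+s\sim a_j^2\}$, so that for each $j$ with $r\lesssim a_j\lesssim 1$ the single difference $\P^\a_{a_{j+1}}f(x,t)-\P^\a_{a_j}f(x,t)$ is, up to a sign, comparable to a fixed positive constant on $\tilde B_r$ (this uses the nondegeneracy of the profile $\Phi$ and that at these scales the kernel ``sees'' the bulk of $f$). Then choose the signs of $v_j$ to align all these terms — i.e. pick the optimizing $N=N(r)$ so that $T_N^\a f(x,t)\gtrsim\sum_{r\le a_j\le 1}|v_j|$ — and select the magnitudes $|v_j|$ to be the extremizer of Hölder's inequality over the $\sim\log(2/r)$ relevant indices, namely $|v_j|$ constant on a block of that length (for $p<\infty$ one has to spread the $\ell^p$ mass appropriately so that the bound holds simultaneously for \emph{all} $r$, which forces the small loss $\e$ in the exponent $p-\e$); for $p=\infty$ one simply takes $v_j\equiv1$ and gets the full $\log(2/r)$. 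The main obstacle — and the part requiring genuine care rather than routine estimation — is exactly this simultaneous-in-$r$ construction in (b): a single sequence $\{v_j\}\in\ell^p$ and a single $f$ must witness the lower bound for every small $r$ at once, and the standard device is to partition $\ent$ into consecutive blocks of geometrically increasing length on which $v_j$ is constant with an $\ell^p$-summable block-contribution, at the cost of replacing the exponent $1/p'$ by $1/(p-\e)'$; verifying that the heat/Poisson kernel genuinely produces a term of constant sign and size on $\tilde B_r$ at each scale (so the triangle inequality is not lossy) is the other delicate point, handled by the explicit Gauss–Weierstrass formula and the positivity/monotonicity of $\P^\a_\tau$ in $\tau$ on positive data.
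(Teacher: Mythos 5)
Your sketch for part (a) replaces the paper's spatial near/far decomposition of $f$ with a decomposition of the sum over $j$ into small/middle/large scales. This is where the gap lies. For the ``small scales'' part $I$ you assert that the contribution is bounded by $C\norm{v}_{\ell^\infty}$ because ``only finitely many small scales contribute to a bounded overlap'' — this is not true, and in fact cannot be, because acting on the \emph{whole} of $f$ the sum over scales $a_j\lesssim r$ is exactly what produces the logarithmic growth in parts (b) and (c). The point you are missing is that the small-scale estimate is only $O(1)$ \emph{after} you also localize $f$ spatially. The paper first writes $f=f_1+f_2$ with $f_1=f\chi_{\tilde B_{2r}}$ and $f_2=f\chi_{\tilde B\setminus\tilde B_{2r}}$; for $f_1$ the whole of $T^*$ (all scales at once) is handled by the $L^2$-boundedness of $T^*$ together with $\norm{f_1}_{L^2}\lesssim |\tilde B_{2r}|^{1/2}\norm{f}_\infty$, which exactly compensates the $|\tilde B_r|^{-1/2}$ coming from Cauchy--Schwarz; for $f_2$, Hölder is applied over \emph{all} $j$ (there is no scale decomposition at all), using the pointwise bound $\sum_j\abs{a_{j+1}^{2\a}e^{-a_{j+1}^2/(4s)}-a_j^{2\a}e^{-a_j^2/(4s)}}/s^{1+\a}\lesssim 1/s$ from Proposition~\ref{Thm:KernelEst}, and the logarithm comes from $\int_{r\le |t-s|\le 2}\frac{ds}{|t-s|}\sim\log(2/r)$ because the integrand is localized to $\tilde B\setminus\tilde B_{2r}$. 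Your middle- and large-scale estimates are fine, and if you first performed the $f_1/f_2$ split (handling $f_1$ via $L^2$-boundedness and observing that for $f_2$ the kernel concentrates away from the support so the small scales decay rapidly) your decomposition could be repaired; but as written the step for part $I$ is unjustified, and the justification you invoke (the Cotlar estimate / $L^p$-boundedness of $T^*$) does not give a scale-free $O(\norm{v}_{\ell^\infty})$ bound on an average of $T^*f$ over $\tilde B_r$ without that spatial restriction.

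For parts (b) and (c) your high-level strategy agrees with the paper: an alternating $\pm1$-valued $f$ adapted to the parabolic annuli together with alternating signs in $v_j$, plus an explicit scaling computation showing each single difference $\P^\alpha_{a_{j+1}}f-\P^\alpha_{a_j}f$ is bounded below by a positive constant on $\tilde B_r$ for $J_0\le j\le 0$. You correctly identify both delicate points (nondegeneracy of the kernel at each scale, and the need for one $\{v_j\}$ to witness the bound for \emph{all} small $r$). The difference is that you propose a block construction ($|v_j|$ constant on geometrically long blocks), whereas the paper simply takes the power law $v_j=(-1)^{j+1}(-j)^{-1/(p-\varepsilon)}$, which lies in $\ell^p$ precisely because $p/(p-\varepsilon)>1$ and gives $\sum_{j=J_0}^{-1}|v_j|\sim(-J_0)^{1/(p-\varepsilon)'}\sim(\log(2/r))^{1/(p-\varepsilon)'}$ by a direct computation; this is cleaner than the block device and also makes transparent why the loss $\varepsilon$ is forced. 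For $p=\infty$ the paper takes $v_j\equiv(-1)^{j+1}$, as you suggest.
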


%The result of Theorem \ref{Thm:GrothLinfinity} shows that the growth of the operator $T_N^\a $ is smaller than that of a standard singular integral operator.
The organization of the paper is as follows. Section \ref{Sec:L2} is devoted to {proving} the uniform boundedness  of the differential transforms $T_N^\alpha$. In Section \ref{Sec:Lp}, we give the proof of our main results, Theorem \ref{Thm:LpBoundOsci}, by proving a Cotlar's type inequality first and then using the vector-valued Calder\'on-Zygmund theory.  Theorem \ref{Thm:ae} is proved in Section \ref{Sec:ae}. At last,  we give the proof of the most interesting result in this paper, i.e. Theorems  \ref{Thm:LinfinityI} and \ref{Thm:GrothLinfinity}, in the {last} two sections.

\vskip 0.3cm
Throughout this paper, the symbol $C$ in an inequality always denotes a constant which may depend on some indices, but never on the functions $f$ in consideration.

\medskip

%%%%%%%%%%%%%%%%%%%%%%%%%%%%%%%%%%%%%%%%%%%%%%%%%%%%%%%%%%%%%%%%%%%
%%%%%%%%%%%%%%%%%%%%%%%%%%%%%%%%%%%%%%%%%%%%%%%%%%%%%%%%%%%%%%%%%%%
\section{Uniform boundedness of $T_N^\alpha$} \label{Sec:L2}

 In this section, we shall  deduce the uniform $L^p$-boundedness of the differential transform $T_N^\a$. Firstly, we should prove the $L^2$-boundedness of the differential transform $T_N^\a$ by Fourier transform. This result will be presented in {Proposition} \ref{Thm:L2Estimate}. And the following lemma gives a useful estimate which will be used in the later proofs.

\begin{lem}[{\cite[Lemma 2.1]{ZMT}}]\label{Lem:ComplexIntegral}
 Let $0<\a<1$. Then for any complex number $z_0$ with $Re z_0 > 0$ and $\displaystyle |\arg z_0 |\leq {\pi}/{4}$, we have
 $$ \int_0^\8  e^{-z_0 u} e^{-\frac{z_0}{u} }\,\frac{du}{u^{\alpha}}= z_0^{1-\a}\int_{0}^{\8}  \frac{e^{-r}e^{- z_0^2/r}}{r^{2-\a}} dr.$$
 \end{lem}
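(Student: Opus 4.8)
The plan is to prove the identity first on the positive real axis, where both sides are honest convergent integrals of positive functions, and then extend to the sector $\{\,\mathrm{Re}\,z_0>0,\ |\arg z_0|\le \pi/4\,\}$ by analytic continuation. So first I would fix $z_0 = a > 0$ real and manipulate the left-hand side $\int_0^\infty e^{-au}e^{-a/u}\,u^{-\alpha}\,du$. The natural move is the substitution $u = a^{?}\cdot(\text{something})$ designed to symmetrize the exponent $au + a/u$; since we want the combination $a(u+1/u)$ replaced by something homogeneous, the substitution $u \mapsto \sqrt{?}$ won't alone do it. Instead I would introduce the Gamma-function trick: write $a^{-\alpha}$-type factors using $\frac{1}{\Gamma(1-\alpha)}$ or, more directly, insert the identity $e^{-a/u} = \frac{1}{\Gamma(1-\alpha)}\int_0^\infty e^{-r}\big(\tfrac{r u}{a}\big)^{-\alpha}\cdots$ — but cleaner: use the elementary subordination formula $e^{-a/u} = \frac{1}{2\pi i}\int \cdots$ is overkill. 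The cleanest route is: in $\int_0^\infty e^{-au}e^{-a/u} u^{-\alpha}\,du$, substitute $u = a^{-1} r$? That gives $a^{\alpha-1}\int_0^\infty e^{-r}e^{-a^2/r} r^{-\alpha}\,du$, which is almost the right-hand side but with $r^{-\alpha}$ instead of $r^{-(2-\alpha)}$. So instead substitute $u = r/a$... let me instead substitute in the RHS: in $\int_0^\infty e^{-r}e^{-z_0^2/r} r^{\alpha-2}\,dr$ put $r = z_0 u$, yielding $z_0^{\alpha-1}\int_0^\infty e^{-z_0 u}e^{-z_0/u}u^{\alpha-2}\,du$. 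That is still not matching the exponent $u^{-\alpha}$ on the LHS. The genuine content must therefore be a further symmetry $u \leftrightarrow 1/u$ on one of the two integrals: under $u\mapsto 1/u$ the LHS integral $\int_0^\infty e^{-z_0 u}e^{-z_0/u}u^{-\alpha}\,du$ becomes $\int_0^\infty e^{-z_0 u}e^{-z_0/u}u^{\alpha-2}\,du$, and averaging is not needed — rather, combining this with the $r=z_0 u$ substitution in the RHS closes the loop. So the real-variable proof is: start from RHS, substitute $r = z_0 u$ (valid for $z_0>0$), get $z_0^{\alpha-1}\int_0^\infty e^{-z_0 u}e^{-z_0/u}u^{\alpha-2}\,du$, then apply $u\mapsto 1/u$ to turn $u^{\alpha-2}$ into $u^{-\alpha}$, arriving at $z_0^{\alpha-1}\int_0^\infty e^{-z_0 u}e^{-z_0/u}u^{-\alpha}\,du = z_0^{\alpha-1}\cdot(\text{LHS})$; multiplying through by $z_0^{1-\alpha}$ gives the claim.

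Next I would justify the analytic continuation. Both sides, as functions of $z_0$, are holomorphic on the open sector $\Sigma = \{\,\mathrm{Re}\,z_0>0,\ |\arg z_0|<\pi/4\,\}$: for the left side the integrand $e^{-z_0 u - z_0/u}u^{-\alpha}$ is dominated, uniformly on compact subsets of $\Sigma$, by $e^{-c(u+1/u)}u^{-\alpha}$ for some $c>0$ (this uses $\mathrm{Re}\,z_0 \ge c$ on the compact set, and crucially $\mathrm{Re}(z_0/u) = u^{-1}\mathrm{Re}\,z_0 > 0$), which is integrable on $(0,\infty)$; hence by Morera/dominated convergence the integral is holomorphic. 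For the right side, $e^{-z_0^2/r}$ has $\mathrm{Re}(z_0^2) \ge 0$ precisely because $|\arg z_0|\le \pi/4$ forces $|\arg z_0^2|\le \pi/2$, so $|e^{-z_0^2/r}|\le 1$ and the integrand is dominated by $e^{-r}r^{\alpha-2}$ near infinity and by $e^{-\mathrm{Re}(z_0^2)/r}r^{\alpha-2}$ near $0$, which is integrable since $\alpha<1$ means... wait, $r^{\alpha-2}$ is not integrable at $0$ by itself, but the factor $e^{-\mathrm{Re}(z_0^2)/r}$ kills it when $\mathrm{Re}(z_0^2)>0$; on the boundary rays $\arg z_0 = \pm\pi/4$ where $\mathrm{Re}(z_0^2)=0$ one argues by continuity from inside the sector. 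Likewise $z_0^{1-\alpha}$ is holomorphic on $\Sigma$ with the principal branch. Since the two holomorphic functions agree on the ray $(0,\infty)\subset\Sigma$, they agree on all of $\Sigma$, and then by continuity on the closed sector.

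The main obstacle is the boundary behaviour at $|\arg z_0| = \pi/4$ together with the endpoint $r\to 0^+$ in the right-hand integral: there $\mathrm{Re}(z_0^2)=0$ so the damping factor $e^{-z_0^2/r}$ becomes purely oscillatory and no longer forces absolute convergence against $r^{\alpha-2}$ near $r=0$. I would handle this either (i) by showing the integral converges conditionally/as an improper integral there and is the continuous boundary value of the holomorphic function on the open sector — e.g. by an integration-by-parts or a contour-rotation argument exploiting that the oscillation $e^{-i|z_0|^2/r}$ becomes rapid as $r\to0$ — or (ii) simply by noting that in all our applications $z_0$ will in fact lie in the \emph{open} sector (or one can always restrict to it), so the closed-sector statement, while clean, follows by the standard limiting argument. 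A second, more routine, technical point is verifying the constant and branch bookkeeping for $z_0^{1-\alpha}$ and for the substitution $r = z_0 u$ along a ray — one checks that $r = z_0 u$ with $u\in(0,\infty)$ parametrizes the ray $\arg r = \arg z_0$, and one may then rotate this ray back to $(0,\infty)$ using Cauchy's theorem, the arcs at $0$ and $\infty$ vanishing by the decay estimates above. With these in hand the identity is established as stated.
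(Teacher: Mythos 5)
Your proof is correct, and it takes a genuinely different route from the paper's. The paper establishes the identity directly for every $z_0$ in the closed sector by contour rotation: writing $F(u)=e^{-z_0/u-uz_0}u^{-\alpha}$ and integrating around a truncated sector bounded by $[\varepsilon,R]$, the arc $|u|=R$, the reversed ray $\{re^{i\arg z_0}:\varepsilon\le r\le R\}$ and the arc $|u|=\varepsilon$, Cauchy's theorem together with vanishing arc estimates (the outer arc needs a dedicated bound precisely when $|\arg z_0|=\pi/4$) gives $\int_0^\infty F=\int_{\mathrm{ray}_{\arg z_0}}F$, after which the substitution $u=sz_0$, $s\in(0,\infty)$, followed by $s=1/r$, produces the right-hand side. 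You instead prove the identity on the real ray $(0,\infty)$ by the two elementary substitutions $r=z_0u$ and $u\mapsto 1/u$, and then pass to the sector by holomorphy and the identity theorem. The virtue of your approach is that the real-variable mechanism becomes transparent: your change of variables $r=z_0u$ is exactly what the paper's complex substitution $u=sz_0$ reduces to once the ray is rotated. The cost is that the only delicate point is deferred to the boundary rays $|\arg z_0|=\pi/4$, where $\mathrm{Re}(z_0^2)=0$ and the integral $\int_0^\infty e^{-r}e^{-z_0^2/r}r^{\alpha-2}\,dr$ is merely conditionally convergent near $r=0$; making the boundary value rigorous requires, in effect, the same contour or integration-by-parts estimate the paper runs head-on, so the two proofs meet on the hard part. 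You flag this honestly, but note that your fallback (ii) is not quite available as stated: in the application (Proposition~\ref{Thm:L2Estimate}) one has $z_0=\tau\sqrt{i\varrho+|\xi|^2}$, which lands exactly on $|\arg z_0|=\pi/4$ when $\xi=0$, so the boundary does occur — albeit only on a null set of $\xi$, which would be the cleaner way to phrase that remark.
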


Now we present the uniform $L^2$-boundedness of the operator $T^\a_N$ in the following proposition.

\begin{prop}\label{Thm:L2Estimate}
 There exists a constant $C>0$, depending on $n$, $\alpha$ and $\norm{v}_{\ell^\infty}$, such that
 $$\sup_N \|T_N^\a f \|_{L^2(\real^{n+1})}\leq C \|f \|_{L^2(\real^{n+1})}.$$
\end{prop}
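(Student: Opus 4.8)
The plan is to pass to the Fourier transform in the variables $(x,t)\in\real^{n+1}$ and show that the multiplier associated to $T_N^\a$ is uniformly bounded in $N$. Recall that the kernel $K_N^\a$ decomposes as a finite sum over $j$ of pieces of the form $a_{j+1}^{2\a}e^{-a_{j+1}^2/(4s)}-a_j^{2\a}e^{-a_j^2/(4s)}$ against the Gauss–Weierstrass kernel in $y$ and the weight $s^{-1-\a}$ in $s$. Since $T_N^\a f=\sum_{j=N_1}^{N_2}v_j(\P_{a_{j+1}}^\a f-\P_{a_j}^\a f)$ and $|v_j|\le\norm{v}_{\ell^\infty}$, by Plancherel it suffices to bound, uniformly in $N$,
\[
\sum_{j=N_1}^{N_2}\bigl|m_\a(a_{j+1},\xi,\sigma)-m_\a(a_j,\xi,\sigma)\bigr|,
\]
where $m_\a(\tau,\xi,\sigma)=(\P_\tau^\a)^\wedge$ is the symbol of the fractional Poisson operator $\P_\tau^\a$, here acting on functions of $(x,t)$ with dual variables $(\xi,\sigma)$.

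First I would compute $m_\a(\tau,\xi,\sigma)$ explicitly. Using the subordination formula \eqref{Formu:GenPoisson}, $\P_\tau^\a=\frac1{\Gamma(\a)}\int_0^\infty e^{-r}e^{-\frac{\tau^2}{4r}L}f\,\frac{dr}{r^{1-\a}}$, and the fact that the symbol of $e^{-sL}=e^{-s(\partial_t-\Delta)}$ is $e^{-s(i\sigma+|\xi|^2)}$ (taking $L$ acting in $(x,t)$), we get
\[
m_\a(\tau,\xi,\sigma)=\frac1{\Gamma(\a)}\int_0^\infty e^{-r}\,e^{-\frac{\tau^2}{4r}(i\sigma+|\xi|^2)}\,\frac{dr}{r^{1-\a}}.
\]
Set $z_0^2=\tfrac14(i\sigma+|\xi|^2)$, a complex number with $Re\,z_0^2\ge0$ and $|\arg z_0^2|\le\pi/2$, hence $|\arg z_0|\le\pi/4$. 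After a change of variables this is essentially the right-hand side of Lemma~\ref{Lem:ComplexIntegral} (up to the scaling $\tau$), so $m_\a(\tau,\xi,\sigma)=\tau^{2\a}\,g(\tau\, z_0^{\,?})$ for a fixed function $g$; more precisely $m_\a(\tau,\cdot)$ is a function of the single complex variable $\tau^2 z_0^2$, say $m_\a(\tau,\xi,\sigma)=\Phi(\tau^2 z_0^2)$ with $\Phi(w)=\frac{1}{\Gamma(\a)}\int_0^\infty e^{-r}e^{-w/r}r^{\a-1}dr$, which is bounded by $1$ for $Re\,w\ge0$ and, by differentiating under the integral, satisfies $|w\,\Phi'(w)|\le C_\a$ uniformly on $Re\,w\ge0$ (the factor $1/r$ from differentiation is absorbed by shifting $\a\mapsto\a-1$ and using $\Gamma(\a)$ versus $\Gamma(\a\pm1)$; one checks $\int_0^\infty e^{-r}e^{-\,Re(w)/r}r^{\a-2}dr\le C|w|^{-1}$ by splitting at $r=|w|$).

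The decisive step is then a telescoping/summation-by-parts estimate. Writing $w_j=a_j^2 z_0^2$, we have $m_\a(a_{j+1})-m_\a(a_j)=\Phi(w_{j+1})-\Phi(w_j)=\int_{w_j}^{w_{j+1}}\Phi'(w)\,dw$ along the ray $\arg w=\arg z_0^2$, so
\[
\bigl|\Phi(w_{j+1})-\Phi(w_j)\bigr|\le\int_{|w_j|}^{|w_{j+1}|}\bigl|\Phi'(\rho e^{i\arg w})\bigr|\,d\rho\le C_\a\int_{|w_j|}^{|w_{j+1}|}\frac{d\rho}{\rho}=C_\a\log\frac{|w_{j+1}|}{|w_j|}=C_\a\log\frac{a_{j+1}^2}{a_j^2}.
\]
This is not summable for a general increasing sequence, so I cannot simply sum; instead I split the range of $j$ according to the dyadic scale of $|w_j|$ relative to $1$ (equivalently, compare $a_j$ with $|z_0|^{-1}$). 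For the indices with $|w_j|\le1$ I use the bound $|\Phi(w_{j+1})-\Phi(w_j)|\le|w_{j+1}-w_j|\sup_{|w|\le1}|\Phi'(w)|\le C|z_0^2|(a_{j+1}^2-a_j^2)$, and since the $a_j^2$ are increasing and all $\le|z_0|^{-2}$, these differences telescope to $\le C|z_0^2|\cdot|z_0|^{-2}=C$. For the indices with $|w_j|\ge1$ I use $|\Phi(w)|\le\frac{1}{\Gamma(\a)}\int_0^\infty e^{-r}e^{-\,Re(w)/r}r^{\a-1}dr\le C_\a|w|^{-\delta}$ for a small $\delta>0$ (again by splitting the $r$-integral), so $|\Phi(w_{j+1})|+|\Phi(w_j)|\le C\sum|w_j|^{-\delta}$, and since on this range the $|w_j|$ themselves form a (super-)geometric-type increasing sequence of terms $\ge1$...

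Here is where I expect the main obstacle: controlling the middle/tail sum for a \emph{general} increasing sequence $\{a_j\}$, not assumed lacunary. The clean way around it is the standard device used in this circle of ideas (as in \cite{BLMMDT, ZMT}): one does not estimate term by term at all, but rather recognizes $\sum_j v_j(m_\a(a_{j+1})-m_\a(a_j))$ as $\sum_j v_j\int_{a_j}^{a_{j+1}}\partial_\tau m_\a(\tau,\xi,\sigma)\,d\tau$ and bounds it by $\norm{v}_{\ell^\infty}\int_0^\infty|\partial_\tau m_\a(\tau,\xi,\sigma)|\,d\tau$ (the intervals $[a_j,a_{j+1})$ are disjoint). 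Thus it suffices to prove the single scale-invariant bound
\[
\int_0^\infty\bigl|\partial_\tau m_\a(\tau,\xi,\sigma)\bigr|\,d\tau\le C_\a<\infty,\qquad\text{uniformly in }(\xi,\sigma).
\]
Since $m_\a(\tau,\xi,\sigma)=\Phi(\tau^2 z_0^2)$, $\partial_\tau m_\a=2\tau z_0^2\,\Phi'(\tau^2 z_0^2)$, and using $|w\Phi'(w)|\le C_\a$ together with $|w\Phi'(w)|\le C_\a|w|^{-\delta}$ for $|w|\ge1$ (both derived as above), the substitution $u=\tau^2|z_0|^2$ gives $\int_0^\infty|\partial_\tau m_\a|\,d\tau\le C_\a\int_0^\infty\min(1,u^{-1-\delta})\,\frac{du}{u}\cdot$(something finite)$\,\le C_\a$. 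Wait — one must be careful that $\int_0^1 u^{-1}\,du$ diverges, so near $u=0$ I instead use $|w\Phi'(w)|\le C|w|$ (i.e. $\Phi'$ bounded near $0$), giving an integrand $\lesssim1$ there; combined with the decay $u^{-1-\delta}$ for large $u$ this yields a finite integral. Feeding this into Plancherel gives $\sup_N\norm{T_N^\a f}_{L^2}\le C\norm{v}_{\ell^\infty}\norm{f}_{L^2}$, which is the claim. The only genuinely delicate points are the two quantitative bounds on $\Phi$ and $w\Phi'(w)$ (boundedness, mild decay at infinity, and regularity at the origin on the sector $Re\,w\ge0$), and checking that the contour/scaling reductions via Lemma~\ref{Lem:ComplexIntegral} are legitimate for complex $z_0$; everything after that is the routine disjoint-intervals argument.
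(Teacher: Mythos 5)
Your overall plan coincides with the paper's: apply Plancherel, replace the sum $\sum_j v_j(m_\a(a_{j+1})-m_\a(a_j))$ by $\sum_j v_j\int_{a_j}^{a_{j+1}}\partial_\tau m_\a\,d\tau$, and then use disjointness of the intervals $[a_j,a_{j+1})$ to reduce to the single uniform bound $\int_0^\infty|\partial_\tau m_\a(\tau,\xi,\sigma)|\,d\tau\le C$. So far so good, and you were right to abandon the telescoping/logarithmic route.

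However, the technical estimates you assert for $\Phi'(w)=-\frac{1}{\Gamma(\a)}\int_0^\infty e^{-r}e^{-w/r}r^{\a-2}\,dr$ on the closed sector $\Re w\ge0$ are not established and in part not true. First, $\Phi'$ is \emph{not} bounded near $0$: for $0<\a<1$ the correct asymptotic is $|\Phi'(w)|\sim|w|^{\a-1}\to\infty$ as $w\to0$ (one sees this by substituting $r=|w|s$; the blow-up is still locally integrable, so the conclusion is salvageable, but the stated bound $|\Phi'(w)|\le C$ on $|w|\le1$ is wrong). Second, and more seriously, your justification of $|\Phi'(w)|\lesssim|w|^{-1}$ via $\int_0^\infty e^{-r}e^{-\Re(w)/r}r^{\a-2}\,dr\le C|w|^{-1}$ by "splitting at $r=|w|$" collapses when $\Re w=0$ (and degrades as $\Re w\ll|w|$): on the imaginary axis the left-hand side is $\int_0^\infty e^{-r}r^{\a-2}\,dr=\infty$ for $\a<1$, since $r^{\a-2}$ is not integrable at $0$. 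Because $w=\tau^2 z_0^2/4$ with $z_0^2=i\sigma+|\xi|^2$, the ray $\arg w=\pi/2$ is exactly the case $\xi=0$, which is not negligible: your substitution $u=\tau^2|z_0|^2$ reduces everything to $\int_0^\infty|\Phi'(ue^{i\theta})|\,du$ with $\theta=\arg z_0^2\in[-\pi/2,\pi/2]$, and you need uniformity in $\theta$ up to and including $\pm\pi/2$. On that boundary the integrand $e^{-r}e^{-w/r}r^{\a-2}$ is oscillatory and not absolutely integrable, so no absolute-value bound of the kind you propose can work.

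This is precisely the role of Lemma~\ref{Lem:ComplexIntegral} in the paper, and it is not merely a "routine contour/scaling check" as you suggest at the end: the contour rotation $\int_0^\infty e^{-r}e^{-z_0^2/r}r^{\a-2}\,dr=z_0^{\a-1}\int_0^\infty e^{-z_0 u}e^{-z_0/u}u^{-\a}\,du$ (for $|\arg z_0|\le\pi/4$) replaces the non-integrable weight $r^{\a-2}$ by the integrable weight $u^{-\a}$ \emph{and} turns the purely oscillatory factors into factors with genuine exponential decay, because $|\arg z_0|\le\pi/4$ gives $|e^{-z_0 u}|\le e^{-cu|z_0|}$, $|e^{-z_0/u}|\le e^{-c|z_0|/u}$ with $c=\sqrt2/2$. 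Only after this rotation does the change of variables in $\tau$ produce a finite $N$-independent constant. To make your argument correct you should either insert Lemma~\ref{Lem:ComplexIntegral} at the point where you bound $\Phi'$ (so that the estimates hold on the whole closed half-plane, not just its interior), or else prove directly that $\int_0^\infty|\Phi'(ue^{i\theta})|\,du$ is finite and uniformly bounded for $|\theta|\le\pi/2$ by some other means; as written, the estimates on $\Phi$ and $\Phi'$ break down on the boundary ray.
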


\begin{proof}
Let $f\in L^2(\real^{n+1})$. Using  the Plancherel theorem, we have
\begin{align*}
 \norm{T_N^\a f }_{L^2(\real^{n+1})} & = \norm{\sum_{j=N_1}^{N_2} v_j(\P_{a_{j+1}}^\a f -\P_{a_j}^\a f)}_{L^2(\real^{n+1})} \leq  C\|v_j\|_{\ell^\infty}
   \norm{\sum_{j=-\8}^{\8} \int_{a_j}^{a_{j+1}} \abs{\partial_\tau \widehat{\P_{\tau}^\a f }} d\tau}_{L^2(\real^{n+1})}.
\end{align*}
By using the second identity in \eqref{Formu:GenPoisson}, we have
\begin{align*}
\partial_\tau  \widehat{\P_{\tau}^\a f }(\xi,\varrho) & = C \partial_\tau \int_0^\8 e^{-r}
      \widehat{ e^{-\frac{\tau^2}{4r} L} f}(\xi,\varrho)\,\frac{dr}{r^{1-\alpha}}\\
    & = C \partial_\tau \int_0^\8 e^{-r} e^{-\frac{\tau^2}{4r}(i\varrho+|\xi|^2)}\widehat{f} (\xi,\varrho)
       \,\frac{dr}{r^{1-\alpha}}\\
    & = C  \int_0^\8 e^{-r}\tau (i\varrho+|\xi|^2) e^{-\frac{\tau^2}{4r}(i\varrho+|\xi|^2)}\widehat{f} (\xi,\varrho)
       \,\frac{dr}{r^{2-\alpha}}.
\end{align*}
Note that the Fourier transform above is well defined, see Remark \ref{Rem:ParaPoisson}. Then we deduce that
\begin{align*}
 \norm{T_N^\a f }_{L^2(\real^{n+1})} & \leq  C
   \norm{  \widehat{f} (\xi,\varrho) \int_{0}^\8 \abs{\int_0^\8 e^{-r}\tau (i\varrho+|\xi|^2) e^{-\frac{\tau^2}{4r}(i\varrho+|\xi|^2)}\,\frac{dr}{r^{2-\alpha}}  } d\tau}_{L^2(\real^{n+1})}.
\end{align*}
Thus again by the Plancherel theorem, the remainder is devoted to prove the uniform boundedness of the multiplier
$$\big|\widehat{K_N^\a}(\xi,\varrho)\big|\le C \abs{\int_{0}^\8\abs{ \int_0^\8 e^{-r}\tau (i\varrho+|\xi|^2) e^{-\frac{\tau^2}{4r}(i\varrho+|\xi|^2)}\,\frac{dr}{r^{2-\alpha}}} d\tau}\leq C, \quad (\xi,\varrho)\in \real^{n+1}_+.$$
Taking $z_0=\tau\sqrt{i\varrho+|\xi|^2}$, we rewrite the above inequality as
$$\abs{\widehat{K_N^\a}(\xi,\varrho)} \le C\int_{0}^\8 \abs{\int_0^\8  e^{-r}z_0 e^{-\frac{z_0^2}{4r} }
       \,\frac{dr}{r^{2-\alpha}}  \sqrt{i\varrho+|\xi|^2} }d\tau, \quad (\xi,\varrho)\in \real^{n+1}_+.$$
 By  Lemma \ref{Lem:ComplexIntegral}, for any $(\xi,\varrho)\in \real_+^{n+1}$, we have
 $$ \int_{0}^\8 \abs{\int_0^\8  e^{-r}z_0 e^{-\frac{z_0^2}{4r} }\,\frac{dr}{r^{2-\alpha}} \sqrt{i\varrho+|\xi|^2}} d\tau
    = 2^{1-\alpha}\int_{0}^\8 \abs{z_0^\a \int_0^\8  e^{-\frac{z_0}{2u}}e^{-\frac{z_0}{2}u} \frac{du}{u^\a} \sqrt{i\varrho+|\xi|^2} }d\tau.$$
 Since $\abs{\arg z_0}\leq \frac{\pi}{4}$, we have $|e^{-z_0/(2u)}| \leq e^{-c|z_0|/u} $ and $|e^{-z_0 u/2}| \leq e^{-c|z_0| u}$, where $c={ \sqrt{2}/ {4}}$. Then
\begin{align*}
\abs{\int_{0}^\8 z_0^\a \int_0^\8  e^{-z_0/u}e^{-z_0 u} \frac{du}{u^\a} \sqrt{i\varrho+|\xi|^2} d\tau} & \leq
  \int_{0}^\8\big|\sqrt{i\varrho+|\xi|^2}\big|\, |z_0|^\a \int_0^\8  e^{-c|z_0|/u}e^{-c|z_0| u} \frac{du}{u^\a} d\tau\\
&\leq \int_{0}^\8 \big|\sqrt{i\varrho+|\xi|^2}\big|\,|z_0|^{2\a-1}\int_0^\8  e^{-c|z_0|^2/v}e^{-c v} \frac{d v}{v^\a} d\tau.
\end{align*}
 Recall that $z_0=\tau \sqrt{i\varrho+|\xi|^2}$. Therefore, if we use $m$ to denote the module of $\sqrt{i\varrho+|\xi|^2}$: $m=|\sqrt{i\varrho+|\xi|^2}|$, we have
\begin{align*}
 &\int_{0}^\8 \abs{\sqrt{i\varrho+|\xi|^2}}\,|z_0|^{2\a-1}\int_0^\8  e^{-c|z_0|^2/v}e^{-c v} \frac{d v}{v^\a} d\tau\\
  & = \int_{0}^\8 m^{2\a}\,\tau^{2\a-1}\int_0^\8  e^{-c(m\tau)^2/v}e^{-c v} \frac{d v}{v^\a} d\tau\\
   & = \int_{0}^\8\int_0^\8 (m\tau)^{2\a-1} e^{-c(m\tau)^2/v} d(m\tau) e^{-c v} \frac{d v}{v^\a}\\
  & = \int_{0}^\8\int_0^\8  \tau^{2\a-1} e^{-c\tau^2/v} d\tau e^{-c v} \frac{d v}{v^\a}
    \leq C \int_{0}^\8   e^{-c v} d v \leq C,
\end{align*}
where the constants $C$ appeared above all are independent of $N.$
Then the proof of the proposition is complete.
\end{proof}

\begin{remark}\label{Rem:ParaPoisson}
Notice that the following integral
$$\frac{y^{2s}}{4^s\Gamma(s)}\int_0^\infty e^{-y^2/(4\tau)}e^{-\tau(i\varrho+\lambda)}\,\frac{d\tau}{\tau^{1+s}},
\quad\varrho\in\real,~\lambda\ge0,~0<s<1.$$
is convergent, as the Cauchy integral theorem and analytic continuation of the formula with $\varrho=0$ showing.
Notice that these integrals are related to Macdonald's functions, see \cite{Lebedev}.
Then \eqref{Formu:GenPoisson} are well defined by using Fourier transform and Hermite expansions, respectively.
\end{remark}

\bigskip

In this paper, we shall  use  the vector-valued Calder\'on-Zygmund theory in spaces of homogeneous type. One of the obvious changes in the parabolic setting is the metric of the underlying spaces. In our case, the parabolic distance is given by
\begin{equation*}\label{Formu:ParaDistance}
 d((x,t), (y,s))= \max \{|x-y|, |t-s|^{1/2}\}, \quad \mbox{ for }(x,t),(y,s)\in\real^n\times\real^1,
\end{equation*}
where the $\abs{\cdot}$ denotes the Euclidean distance.
Then $ \real^{n+1}= \real^n\times \real^1$  with the topology generated by the distance $d$ and the compatible Borel measure $dxdt$ forms a  space of homogeneous type. On the space of homogeneous type $(\mathbb R^{n+1}, d, dxdt)$, we can define the Calder\'on-Zygmund operators. For full details, see \cite{RuTo}, also \cite{LST} for parabolic case. In the following,  we shall show that the kernel $K_N^\alpha$ of the operator $T_N^\alpha$ is a Calder\'on-Zygmund kernel.

\begin{prop}\label{Thm:KernelEst}
There exists a constant $C$ depending  on $n, \alpha$ and $\norm{v}_{\ell^\infty}$(not on $N$) such that, for any $(y,s)\neq (0,0),$
\begin{enumerate}[\indent i)]
  \item $\displaystyle \abs{K_N^\a(y,s)}\leq \frac{C}{(s^{1/2}+|y|)^{n+2}}$,
  \item $\displaystyle \abs{\nabla_y K_N^\a(y,s)} \leq \frac{C}{(s^{1/2}+|y|)^{n+3}}$,
  \item $\displaystyle  \abs{\partial_s K_N^\a(y,s)}\leq \frac{C}{(s^{1/2}+|y|)^{n+4}}$.
\end{enumerate}
\end{prop}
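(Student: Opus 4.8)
The plan is to estimate the kernel
$$K_N^\a(y,s)=\frac{1}{4^\alpha\Gamma(\alpha)} \sum_{j=N_1}^{N_2}v_j \frac{ a_{j+1}^{2\a}e^{-a_{j+1}^2/(4 s)}-a_j^{2\a} e^{-a_j^2/(4 s)}}{s^{1+\a}} \frac{e^{-|y|^2/(4s)}}{(4\pi s)^{n/2}}$$
by first absorbing the summation into an integral. Since $\norm{v}_{\ell^\infty}<\infty$, the telescoping-type sum is controlled by
$$\abs{K_N^\a(y,s)}\le C\norm{v}_{\ell^\infty}\,\frac{e^{-|y|^2/(4s)}}{s^{1+\a}(4\pi s)^{n/2}}\sum_{j\in\ent}\int_{a_j}^{a_{j+1}}\abs{\partial_\tau\bigl(\tau^{2\a}e^{-\tau^2/(4s)}\bigr)}\,d\tau=C\norm{v}_{\ell^\infty}\,\frac{e^{-|y|^2/(4s)}}{s^{1+\a}(4\pi s)^{n/2}}\int_0^\8\abs{\partial_\tau\bigl(\tau^{2\a}e^{-\tau^2/(4s)}\bigr)}\,d\tau,$$
the point being that $\tau\mapsto\tau^{2\a}e^{-\tau^2/(4s)}$ is unimodal (increasing then decreasing), so the sum of the variations over the intervals $[a_j,a_{j+1}]$ telescopes to at most twice the maximum value, which is $\le C s^{\a}$. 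Hence the whole thing reduces to the single bound
$$\abs{K_N^\a(y,s)}\le \frac{C}{s^{1+n/2}}\,e^{-|y|^2/(4s)}.$$

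Next I would convert this Gaussian-in-$s$ bound into the required homogeneous-kernel bound in the parabolic metric. Using the elementary fact that $s^{-k/2}e^{-|y|^2/(4s)}\le C_k (s^{1/2}+|y|)^{-k}$ for each fixed $k\ge 0$ — split into the cases $|y|\le s^{1/2}$, where $e^{-|y|^2/(4s)}\le 1$ and $s^{1/2}\simeq s^{1/2}+|y|$, and $|y|>s^{1/2}$, where $e^{-|y|^2/(4s)}$ decays faster than any power of $|y|/s^{1/2}$ — one gets (i). For (ii) and (iii) the strategy is the same, but one must first differentiate. Writing $g_\tau(s):=\tau^{2\a}e^{-\tau^2/(4s)}s^{-1-\a}$ and $h(y,s):=(4\pi s)^{-n/2}e^{-|y|^2/(4s)}$, we have $K_N^\a=\tfrac1{4^\a\Gamma(\a)}\sum v_j(g_{a_{j+1}}-g_{a_j})\cdot h$, so $\nabla_y K_N^\a=\tfrac1{4^\a\Gamma(\a)}\sum v_j(g_{a_{j+1}}-g_{a_j})\cdot\nabla_y h$, and $\nabla_y h$ contributes an extra factor $|y|/(2s)$, i.e. an extra $s^{-1/2}\cdot(|y|/s^{1/2})\le C s^{-1/2}$ against the Gaussian; repeating the sum-to-integral reduction verbatim gives $\abs{\nabla_y K_N^\a(y,s)}\le C s^{-1/2}\cdot s^{-1-n/2}e^{-|y|^2/(8s)}$ and then (ii). For (iii), $\partial_s K_N^\a$ hits either $g_\tau$ or $h$; one checks that $\partial_s(g_{a_{j+1}}-g_{a_j})$ still telescopes after integrating $\partial_\tau\partial_s(\tau^{2\a}e^{-\tau^2/(4s)})$ over $[a_j,a_{j+1}]$, because for each fixed $s$ the function $\tau\mapsto\partial_s(\tau^{2\a}e^{-\tau^2/(4s)})$ has boundedly many sign changes and is dominated by $C s^{\a-1}$, while $\partial_s h$ contributes a factor $O(1/s)+O(|y|^2/s^2)$, each of which costs an extra $s^{-1}$ against the Gaussian; altogether $\abs{\partial_s K_N^\a}\le C s^{-2-n/2}e^{-|y|^2/(8s)}$, yielding (iii).

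The main obstacle I anticipate is controlling the summation $\sum_{j=N_1}^{N_2} v_j(\cdot)$ \emph{uniformly in $N$} without losing a factor that grows with $N_2-N_1$: since $|v_j|\le\norm v_{\ell^\infty}$ we cannot simply take absolute values inside the sum term by term, so we genuinely need the "fundamental theorem of calculus plus telescoping" trick $\sum_j|g_{a_{j+1}}-g_{a_j}|=\sum_j|\int_{a_j}^{a_{j+1}}\partial_\tau g_\tau\,d\tau|\le\int_0^\8|\partial_\tau g_\tau|\,d\tau$, and then the honest work is to verify that $\int_0^\8|\partial_\tau(\tau^{2\a}e^{-\tau^2/(4s)})|\,d\tau\le C s^\a$ and the analogous bound $\int_0^\8|\partial_\tau\partial_s(\tau^{2\a}e^{-\tau^2/(4s)})|\,d\tau\le C s^{\a-1}$ — this is where one uses $0<\a<1$ to guarantee integrability at $\tau=0$ and the at-most-two sign changes of the integrand. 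After the one-variable computations are in hand, parts (ii) and (iii) differ from (i) only by bookkeeping of polynomial-times-Gaussian factors, all absorbed by the elementary inequality $s^{-k/2}e^{-c|y|^2/s}\le C(s^{1/2}+|y|)^{-k}$.
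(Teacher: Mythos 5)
Your proposal is correct and follows essentially the same path as the paper: telescope the $N$-dependent sum into the $N$-independent total variation $\int_0^\infty\bigl|\partial_\tau\bigl(\tau^{2\alpha}e^{-\tau^2/(4s)}\bigr)\bigr|\,d\tau\le Cs^\alpha$, then apply Lemma \ref{Lem:KernelLpEst} to turn the resulting Gaussian bound $Cs^{-1-n/2}e^{-|y|^2/(4s)}$ into the parabolic homogeneous estimate $(s^{1/2}+|y|)^{-(n+2)}$. Your sketches of (ii) and (iii) also match the structure the paper declares ``similar'' and omits, with the extra factors $|y|/s$ and $1/s$, $|y|^2/s^2$ absorbed exactly as you indicate.
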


The proof of {Proposition}  \ref{Thm:KernelEst} involves  a lemma in the following.
\begin{lem}\label{Lem:KernelLpEst}
Define a function $\displaystyle S(x,t)= \frac{e^{-{|x|^2\over c t}}}{t^{l+m}}$, $x\in \mathbb{R}^n,t\in\mathbb{R}_+$. Then,  for any integer $l, m \ge 1$, there exists a constant $C>0$, such that
$$\abs{S(x,t)} \leq \frac{C}{(t^{1/2}+|x|)^{2(l+m)}}.$$
\end{lem}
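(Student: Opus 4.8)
The plan is to reduce the claimed pointwise bound to an elementary one-variable estimate. Set $k=l+m$, so $k\ge 2$. Since $e^{-|x|^2/(ct)}>0$, the inequality $|S(x,t)|\le C(t^{1/2}+|x|)^{-2k}$ is equivalent to
$$(t^{1/2}+|x|)^{2k}\,e^{-|x|^2/(ct)}\le C\,t^{k},\qquad x\in\real^n,\ t>0.$$
First I would use the elementary comparison $(t^{1/2}+|x|)^2\le 2(t+|x|^2)$ (just $2ab\le a^2+b^2$ with $a=t^{1/2}$ and $b=|x|$), so that it suffices to prove $(t+|x|^2)^k e^{-|x|^2/(ct)}\le C t^k$. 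Expanding by the binomial theorem, $(t+|x|^2)^k=\sum_{j=0}^{k}\binom{k}{j}t^{k-j}|x|^{2j}$, so the matter reduces to checking, for each $0\le j\le k$, the termwise estimate $t^{k-j}|x|^{2j}e^{-|x|^2/(ct)}\le C_j t^{k}$, that is $\big(|x|^2/t\big)^{j}e^{-|x|^2/(ct)}\le C_j$.

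Next I would substitute $u=|x|^2/t\ge 0$; the last inequality becomes $\sup_{u\ge 0}u^{j}e^{-u/c}=:C_j<\infty$, which holds because $u\mapsto u^{j}e^{-u/c}$ is continuous on $[0,\infty)$ and vanishes as $u\to\infty$; in fact differentiating gives $C_j=(cj/e)^{j}$ for $j\ge 1$ and $C_0=1$. Summing the finitely many terms yields the constant $C=2^{k}\sum_{j=0}^{k}\binom{k}{j}C_j$, which depends only on $l$, $m$ and $c$, and this finishes the proof.

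There is no genuine obstacle here: the only point requiring a little care is bookkeeping the powers of $t$ so that the exponential factor is always matched against the corresponding power of $|x|^{2}$ before the substitution $u=|x|^2/t$ is made. This lemma will then be used in the proof of {Proposition}~\ref{Thm:KernelEst} to control the Gaussian factors appearing in the kernel $K_N^\a$ and in its $y$- and $s$-derivatives, after the polynomial prefactors of the form $a_j^{2\a}e^{-a_j^2/(4s)}$ have themselves been absorbed by the same one-variable trick $\sup_{u\ge 0}u^\a e^{-u/4}<\infty$.
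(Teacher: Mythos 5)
Your proposal is correct and rests on essentially the same idea as the paper: the paper directly invokes the standard estimate $e^{-|y|^2/c}\le C_m(1+|y|)^{-m}$ with $y=|x|/\sqrt t$ and then simplifies, whereas you re-derive that bound from scratch by the comparison $(t^{1/2}+|x|)^2\le 2(t+|x|^2)$, a binomial expansion, and the boundedness of $u\mapsto u^j e^{-u/c}$ — the same underlying ``exponential beats polynomial'' fact, just spelled out termwise rather than cited. Both are valid; the paper's version is shorter, yours is more self-contained.
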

\begin{proof}
In fact, it is well known that, for any positive integer $m$, we have
   $$e^{-\frac{|x|^2}{c}}\leq C_m(1+|x|)^{-m}.$$
Then
\begin{align*}
 S(x,t) & =\frac{e^{-\frac{|x|^2}{ct}}}{t^{l+m}}=\frac{e^{-\frac1c \big(\frac{|x|}{\sqrt{t}}\big)^2}}{t^{l+m}}
                 \leq  \frac{C}{t^{l+m}} \big(1+\frac{|x|}{\sqrt{t}}\big)^{-2(l+m)}\\
         &\leq \frac{C}{t^{l+m}} \frac{t^{l+m}}{(t^{1/2}+|x|)^{2(l+m)}}\leq \frac{C}{(t^{1/2}+|x|)^{2(l+m)}}.
\end{align*}
The lemma is proved.
\end{proof}

\medskip
\begin{proof}[ Proof of {Proposition} \ref{Thm:KernelEst}.]
{\it i)}~This is just the growth condition for the kernel in the parabolic case. We have
\begin{align*}
  |K_N^\a (y,s)| &\leq C\sum_{j=-\8}^{\8} \left|\frac{a_{j+1}^{2\a} e^{-a_{j+1}^2/(4s)}-a_j^{2\a} e^{-a_j^2/(4s)}}{s^{1+\a}} \frac{e^{-|y|^2/(4s)}}{s^{n/2}} \right|\\
  &= C\sum_{j=-\8}^{\8} \left| a_{j+1}^{2\a} e^{-a_{j+1}^2/(4s)}-a_j^{2\a} e^{-a_j^2/(4s)}\right| \frac{e^{-|y|^2/(4s)}}{s^{1+\a+n/2}}.
\end{align*}

Observe that
\begin{align*}
&\sum_{j=-\8}^{\8} \left| a_{j+1}^{2\alpha}e^{-a_{j+1}^2/(4s)}-a_j^{2\alpha}e^{-a_j^2/(4s)}\right|
        = \sum_{j=-\8}^{\8} \left|\int_{a_j}^{a_{j+1}}\partial_u \left(u^{2\a} e^{-u^2/(4s)}\right) du\right|\\
       & \leq \int_{0}^{\infty}\left|( 2\a u^{2\a-1}-\frac{u^{2\a+1}}{2s})e^{-u^2/(4s)} \right| du \leq C\int_{0}^{\infty}\left|(u^{2\a-1}+\frac{u^{2\a+1}}{2s})e^{-u^2/(4s)} \right| du\\
       &\leq C\sqrt{s}\Big(\int_{0}^{\infty} (\sqrt{s} )^{2\a-1}  \left(\frac{u}{\sqrt{s}} \right)^{2\a-1} e^{-\frac14 \left(u/\sqrt{s}\right)^2} d\frac{u}{\sqrt{s}}\\
       &\quad\,+ s^{\a-1/2}\int_{0}^{\infty}\left(\frac{u}{\sqrt{s}}\right)^{2\a+1}  e^{-\frac14\left(u/\sqrt{s} \right)^2} d\frac{u}{\sqrt{s}}\Big)\\
       &\leq C s^{\a}.
\end{align*}
Then by Lemma \ref{Lem:KernelLpEst}, we have
\begin{align*}
  |K_N^\a (y,s)| & \leq C \frac{e^{-|y|^2/(4s)}}{s^{n/2+1}}\leq \frac{C}{(s^{1/2}+|y|)^{n+2}}.
\end{align*}

{The proof of {\it ii)} and {\it iii)} are similar. We omit them   here.}
\end{proof}

\begin{remark}\label{Rem:vectorCZK}
If we consider an $\ell^\infty(\mathbb Z^2)$-valued operator $Q: f\mapsto \left\{T_Nf(x,t)\right\}_{N\in \mathbb Z^2}$ on the homogeneous space $(\real^{n+1}, d, dxdt)$, then $T^* f(x,t)=\norm{Qf(x,t)}_{\ell^\infty(\mathbb Z^2)}$. Since the constant $C$ in {Proposition} \ref{Thm:KernelEst} is not depend on $N=(N_1, N_2)$, we know that the kernel of the operator $Q$ is an $\ell^\infty(\mathbb Z^2)$-valued Calder\'on-Zygmund kernel.
\end{remark}

From  Propositions  \ref{Thm:L2Estimate}, \ref{Thm:KernelEst}, and the vector-valued Calder\'on-Zygmund theory, we can get the uniform estimate in $L^p(\real^{n+1}, \omega)$ ($1 < p< \infty, \omega\in A_p(\real^{n+1})$) of the operators $T_N^\alpha$.

\begin{prop}\label{Thm:PoissonLp}
For the operator $T_N^\a$ defined in (\ref{Formu:FinSquareFun}), we have the following statements.
\begin{enumerate}[(a)]
    \item For any $1<p<\infty$ and $\omega\in A_p(\real^{n+1})$,  there exists a constant $C$ depending  on $n, p, \omega, \alpha$ and $\norm{v}_{\ell^\infty(\mathbb Z)}$ such that
 $$\norm{T_N^\alpha f}_{L^p(\mathbb R^{n+1}, \omega)}\leq C\norm{f}_{L^p(\mathbb R^{n+1}, \omega)},$$
 for all functions $f\in L^p(\real^{n+1}, \omega).$
    \item For any   $\omega\in A_1(\real^{n+1})$, there exists a constant $C$ depending  on $n, \omega, \alpha$ and $\norm{v}_{\ell^\infty(\mathbb Z)}$ such that
 $$\omega\left({\{(x,t)\in \real^{n+1}:\abs{T_N^\alpha f(x,t)}>\lambda\}}\right) \le C\frac{1}{\lambda}\norm{f}_{L^1(\mathbb R^{n+1}, \omega)},\quad \lambda>0,$$
for all functions $f\in L^1(\real^{n+1}, \omega).$
    \item There exists a constant $C$ depending  on $n, \alpha$ and $\norm{v}_{\ell^\infty(\mathbb Z)}$ such that
$$\norm{T_N^\alpha f}_{BMO(\mathbb R^{n+1})}\leq C\norm{f}_{L^\infty(\mathbb R^{n+1})},$$
for all functions $f\in L^\infty(\real^{n+1}).$
\item There exists a constant $C$ depending  on $n, \alpha$ and $\norm{v}_{\ell^\infty(\mathbb Z)}$ such that
$$\norm{T_N^\alpha f}_{BMO(\mathbb R^{n+1})}\leq C\norm{f}_{BMO(\mathbb R^{n+1})},$$
for all functions $f\in BMO(\real^{n+1}).$
\end{enumerate}
The constants $C$ appeared above all are independent of $N.$
\end{prop}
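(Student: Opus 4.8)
The plan is to recognize each $T_N^\a$ as a Calder\'on--Zygmund operator on the parabolic space of homogeneous type $(\real^{n+1},d,dxdt)$ whose Calder\'on--Zygmund constants are \emph{uniform in} $N$, and then to read off statements (a)--(d) from the standard Calder\'on--Zygmund theory on spaces of homogeneous type (and its $A_p$ counterpart). Both structural inputs are already available: Proposition \ref{Thm:L2Estimate} supplies the uniform $L^2$ bound, and Proposition \ref{Thm:KernelEst} supplies the size estimate \textit{i)} together with the derivative estimates \textit{ii)} and \textit{iii)}, all with constants independent of $N$. The one thing that still has to be checked before invoking the theory is that \textit{i)}--\textit{iii)} encode a genuine Calder\'on--Zygmund kernel relative to the parabolic metric $d$ (and that the adjoint kernel does too); the rest is a matter of quoting the appropriate theorem.

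So the first step is to convert \textit{i)}--\textit{iii)} into the H\"ormander regularity condition relative to $d$. If $d((y',s'),(0,0))=\delta$, i.e.\ $|y'|\le\delta$ and $|s'|\le\delta^2$, and $d((y,s),(0,0))>2\delta$, then on the relevant region $s^{1/2}+|y|\sim D$ with $D:=d((y,s),(0,0))$, and writing $K_N^\a(y-y',s-s')-K_N^\a(y,s)$ as a $y$-increment plus an $s$-increment, the mean value theorem with \textit{ii)} and \textit{iii)} gives
$$|K_N^\a(y-y',s-s')-K_N^\a(y,s)|\le C\frac{\delta}{D^{\,n+3}}+C\frac{\delta^2}{D^{\,n+4}}\le C\frac{\delta}{D^{\,n+3}};$$
decomposing $\{d((y,s),(0,0))>2\delta\}$ into parabolic annuli of $dxdt$-measure $\sim(2^k\delta)^{n+2}$ and summing the geometric series $\sum_k 2^{-k}$ yields the uniform H\"ormander bound, and the identical computation for $K_N^\a(-y,-s)$ handles the transpose. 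The point that I expect to require the most care is the parabolic scaling near $s=0^+$: in the ``flat'' range where $s$ is small but $|y|$ is large the gradient bound cannot be used naively, and one must instead exploit the rapid decay of $K_N^\a(y,s)$ as $s\to0^+$ coming from the Gaussian factor $e^{-|y|^2/(4s)}\le C_m\,s^m|y|^{-2m}$, which is exactly the kind of bookkeeping that has to be done carefully but is routine.

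With the Calder\'on--Zygmund structure established, (a) and (b) follow from the Calder\'on--Zygmund theorem on spaces of homogeneous type (see \cite{RuTo}, and \cite{LST} for the parabolic setting): the $L^2$ bound plus the kernel estimates give the weak type $(1,1)$ inequality and the unweighted $L^p$ bounds for $1<p<\infty$, and then the weighted $L^p(\omega)$ bounds for $\omega\in A_p$ and the weighted weak $(1,1)$ for $\omega\in A_1$ are the usual $A_p$-consequences (as in \cite[Chapter 7]{Duo}); since the CZ constants are uniform in $N$, so are all these operator norms (the same reasoning applies verbatim to the $\ell^\infty(\mathbb Z^2)$-valued kernel of Remark \ref{Rem:vectorCZK}). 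For (c) I would first note that $\P_\tau^\a 1=\tfrac{1}{\Gamma(\alpha)}\int_0^\infty e^{-r}r^{\alpha-1}\,dr=1$ because the heat semigroup preserves constants, whence $T_N^\a 1=0$; then, defining $T_N^\a f$ for $f\in L^\infty$ modulo constants by subtracting on each cylinder the pairing of $K_N^\a$ against $f$ away from a dilate of the cylinder (absolutely convergent by \textit{i)}), the mean oscillation is controlled by the $L^2$ bound on the local part and the H\"ormander condition on the global part --- the standard $L^\infty\to BMO$ argument for CZ operators with cancellation. Statement (d) is then the standard $BMO\to BMO$ estimate for a CZ operator with $T_N^\a1=0$, obtainable either by the classical duality/commutation argument or directly, with constant again depending only on the uniform CZ data. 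Besides the H\"ormander verification, the only genuine subtlety in this last paragraph is making the $L^\infty$ and $BMO$ formulations rigorous, i.e.\ fixing the normalization of $T_N^\a f$ up to additive constants and checking that the off-diagonal kernel estimates make every resulting expression well defined.
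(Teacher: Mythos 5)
Your proposal matches the paper's approach exactly: both combine the uniform $L^2$ bound of Proposition \ref{Thm:L2Estimate} with the uniform kernel estimates of Proposition \ref{Thm:KernelEst} to recognize $T_N^\alpha$ as a Calder\'on--Zygmund operator on the parabolic space of homogeneous type, then invoke Theorem 7.12 of \cite{Duo} for the weighted conclusions (a)--(b) and the vector-valued Calder\'on--Zygmund machinery of \cite{Xu} for (c)--(d). The paper cites these results without spelling out the H\"ormander verification or the cancellation $T_N^\alpha 1=0$, both of which you correctly supply, so your argument is a fleshed-out version of the same route rather than a different one.
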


As we said before, the proof of $(a)$ and $(b)$ in  the proposition above is  obtained by using Theorem 7.12 in \cite{Duo}. On the other hand the proof of $(c)$ and $(d)$ are standard in the vector-valued Calder\'on-Zygmund theory and it can be found in \cite{Xu}.

%%%%%%%%%%%%%%%%%%%%%%%%%%%%%%%%%%%%%%%%%%%%%%%%%%%%%%%%%%%%%%%%%%%
%%%%%%%%%%%%%%%%%%%%%%%%%%%%%%%%%%%%%%%%%%%%%%%%%%%%%%%%%%%%%%%%%%%
\section{Proof of Theorem \ref{Thm:LpBoundOsci} }\label{Sec:Lp}

%%%%%%%%%%%%%%%%%%%%%%%%%%%%%%%%%%%%%%%%%%%%%%%%%%%%%%%%%%%%%%%%%%%
%%%%%%%%%%%%%%%%%%%%%%%%%%%%%%%%%%%%%%%%%%%%%%%%%%%%%%%%%%%%%%%%%%%

In this section, we will give the proof of the boundedness of the maximal operator $T^*.$
The next proposition,  parallel to  Proposition 3.2 in \cite{BLMMDT}(see it also in \cite{ZMT, ZT}), shows that, without lost of generality, we may assume that
\begin{equation}\label{equ:lacunary}
1<\rho \leq {a_{j+1} \over a_j}\leq \rho^2, \quad j\in \mathbb Z.
\end{equation}
We omit its proof  here.
\begin{prop}\label{Prop:lacunary}
Given a $\rho$-lacunary sequence $\{a_j\}_{j\in \mathbb Z}$ and a multiplying sequence $\{v_j\}_{j\in \mathbb Z}\in \ell^\infty(\mathbb Z)$, we can define a $\rho$-lacunary sequence $\{\eta_j\}_{j\in \mathbb Z}$ and $\{\theta_j\}_{j\in \mathbb Z}\in \ell^\infty(\mathbb Z)$ verifying the following properties:
\begin{enumerate}[(i)]
\item $1<\rho \leq \eta_{j+1}/\eta_j\leq \rho^2,\quad  \norm{\{\theta_j\}}_{\ell^\infty(\mathbb Z)}=\norm{\{v_j\}}_{\ell^\infty(\mathbb Z)}$.
\item For all $N=(N_1, N_2),$ there exists $N'=(N_1', N_2')$ with $T_N^\alpha=\tilde{T}_{N'}^\alpha,$
where $\tilde{T}_{N'}^\alpha$ is the operator defined in \eqref{Formu:FinSquareFun} for the new sequences $\{\eta_j\}_{j\in \ent}$ and $\{\theta_j\}_{j\in \ent}.$
\end{enumerate}
\end{prop}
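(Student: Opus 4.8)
The plan is to \emph{refine} the given lacunary sequence by inserting finitely many geometrically spaced points into each gap $[a_j,a_{j+1}]$, and to extend the multiplier sequence by repeating the value $v_j$ on all the indices that subdivide that gap; the telescoping form of $T_N^\alpha$ then forces the operator to be unchanged up to relabelling. The one elementary fact needed is that $\bigcup_{k\ge 1}[\rho^k,\rho^{2k}]=[\rho,\infty)$, which holds because consecutive intervals overlap, $\rho^{k+1}\le\rho^{2k}$ for every $k\ge 1$ (recall $\rho>1$). Since $\{a_j\}$ is $\rho$-lacunary, $r_j:=a_{j+1}/a_j\ge\rho$, so this fact supplies an integer $k_j\ge1$ with $\rho\le r_j^{1/k_j}\le\rho^2$. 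I would then subdivide $[a_j,a_{j+1}]$ by the finite geometric progression $a_j,\,a_jr_j^{1/k_j},\,a_jr_j^{2/k_j},\dots,a_jr_j^{k_j/k_j}=a_{j+1}$, so that every consecutive ratio occurring inside the subdivision equals $r_j^{1/k_j}\in[\rho,\rho^2]$. Concatenating these subdivisions over $j\in\ent$ (each gap contributing its $k_j-1$ new interior points, finitely many) gives a strictly increasing sequence of positive reals; since every ratio is $\ge\rho>1$ it still tends to $0$ at $-\infty$, to $\infty$ at $+\infty$, and has no accumulation point in $(0,\infty)$, hence it can be re-enumerated as a genuine $\ent$-indexed sequence $\{\eta_j\}_{j\in\ent}$ with $1<\rho\le\eta_{j+1}/\eta_j\le\rho^2$; normalising by, e.g., $\eta_0=a_0$ fixes the enumeration.

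Next I would define $\theta_i:=v_j$ whenever $(\eta_i,\eta_{i+1})\subset(a_j,a_{j+1})$, i.e.\ $\{\theta_i\}$ is constant and equal to $v_j$ on the block of indices that subdivide the $j$-th original gap. Every $v_j$ is used and no new value is introduced, so $\norm{\{\theta_j\}}_{\ell^\infty(\ent)}=\norm{\{v_j\}}_{\ell^\infty(\ent)}$, giving (i). For (ii), given $N=(N_1,N_2)$ choose $N_1'$ with $\eta_{N_1'}=a_{N_1}$ and $N_2'$ with $\eta_{N_2'+1}=a_{N_2+1}$ (the original points $a_j$ survive in $\{\eta_j\}$). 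Splitting $\sum_{i=N_1'}^{N_2'}\theta_i(\P_{\eta_{i+1}}^\alpha f-\P_{\eta_i}^\alpha f)$ into the blocks associated with the successive original gaps and telescoping within each block, where $\theta_i$ is constant, yields
\begin{equation*}
\tilde{T}_{N'}^\alpha f=\sum_{i=N_1'}^{N_2'}\theta_i\big(\P_{\eta_{i+1}}^\alpha f-\P_{\eta_i}^\alpha f\big)=\sum_{j=N_1}^{N_2}v_j\big(\P_{a_{j+1}}^\alpha f-\P_{a_j}^\alpha f\big)=T_N^\alpha f.
\end{equation*}

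There is essentially no analytic obstacle: the proof uses only the defining telescoping shape of $T_N^\alpha$. The mildly delicate points are the existence of $k_j$ (covered by the interval identity above) and verifying that the refined point set remains order-isomorphic to $\ent$, so that $\{\eta_j\}$ and $\{\theta_j\}$ are legitimate $\ent$-indexed sequences; both are routine consequences of inserting only finitely many points per gap while preserving $\rho$-lacunarity, which is why the authors omit the argument. I would include just these observations and the displayed identity.
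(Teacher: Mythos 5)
Your proof is correct, and since the paper explicitly omits its own proof (deferring to Proposition 3.2 of the cited Bernardis--Lorente--Mart\'in-Reyes--Mart\'inez--de la Torre--Torrea reference), there is no in-paper argument to compare against; your geometric-subdivision-plus-constant-block-multiplier construction is the standard and natural one for this kind of statement. The key covering fact $\bigcup_{k\ge 1}[\rho^k,\rho^{2k}]=[\rho,\infty)$, the observation that only finitely many points are inserted in each gap so the refined set stays order-isomorphic to $\ent$, and the telescoping within constant blocks are all handled correctly.
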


It follows from this proposition that it is enough to prove all the results of this article
in the case of a $\rho$-lacunary sequence satisfying \eqref{equ:lacunary}. For this reason, in the rest of the article
we assume that  $\{a_j\}_{j\in \mathbb Z}$  satisfies \eqref{equ:lacunary} without saying it explicitly.

In order to prove Theorem \ref{Thm:LpBoundOsci}, we need a Cotlar's type inequality to control the operator $T^*$ by some  Hardy-Littlewood maximal operators.

For any $M\in \mathbb Z^+,$ let\\
$$T_M^*f(x,t)=\sup_{-M\le N_1<N_2\le M}\abs{T_N^\alpha f(x,t)},\quad (x,t)\in \mathbb R^{n+1}.$$
And then we have
$$T^*f(x,t)=\sup_{M\in \mathbb Z^+} T_M^*f(x,t).$$
Then we can start proving a pointwise estimate for the operators $T_M^*f$ by the Hardy-Littlewood maximal operator defined as
$$\M_q f(x,t)=\sup_{B\ni x}\left(\frac{1}{\abs B}\int_B\abs{f(y,t)}^qdy\right)^{1/q},\quad (x, t)\in \real^{n+1},$$
and the maximal operator
$$\M^-_q f(x,t)=\sup_{\varepsilon >0}\left(\frac{1}{\varepsilon}\int_{-\varepsilon}^0 \abs{f(x,t+s)}^qds\right)^{1\over q},$$
for $ 1\le q<\infty$. And we denote $\M=\M_1$ and $\M^-=\M^-_1$, for simple.

\begin{thm}\label{Thm:Maximalcontrol}
For each $q\in (1, \infty),$ there exists a constant $C$ depending only on $\alpha, \rho,$ $\norm{v}_{\ell^\infty}$ and $n$, such that  for every $(x,t)\in\mathbb R^{n+1}$ and every $M\in \mathbb Z^+$,
\begin{equation*}
T_M^*f(x,t)\le C\left\{\left(\M^-\circ \M\right) (T_{(-M, M)}^\alpha f)(x,t)+\left(\M_q^-\circ\M_q\right) f(x,t)\right\}.
\end{equation*}
\end{thm}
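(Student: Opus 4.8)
The plan is to follow the classical Cotlar-type argument adapted to the parabolic, vector-valued setting, exactly as in the corresponding results of \cite{BLMMDT,ZMT,ZT}, but keeping track of the two ``directions'' of the parabolic metric (the spatial balls $B$ and the one-sided time intervals $[-\varepsilon,0]$). Fix $(x,t)\in\mathbb R^{n+1}$, fix $M\in\mathbb Z^+$, and fix $N=(N_1,N_2)$ with $-M\le N_1<N_2\le M$; all estimates will be uniform in $N$ and $M$, so the supremum defining $T_M^*$ is harmless. The kernel $K_N^\alpha$ is, by Proposition \ref{Thm:KernelEst} and Remark \ref{Rem:vectorCZK}, an $\ell^\infty(\mathbb Z^2)$-valued Calder\'on--Zygmund kernel on the space of homogeneous type $(\mathbb R^{n+1},d,dxdt)$, with constants independent of $N$; this is what makes the splitting below work.

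\textbf{Step 1: localization of the integration variable.} Write $a_j$ for the relevant scales. Given the point $(x,t)$, split $T_N^\alpha f(x,t)=\int_{\mathbb R^{n+1}} K_N^\alpha(x-y,t-s)f(y,s)\,dy\,ds$ according to whether $(y,s)$ is ``close'' to $(x,t)$ in the parabolic metric or not; the natural scale at which to cut is governed by the lacunarity parameter $\rho$ and the scales $a_{N_1},a_{N_2+1}$ appearing in $T_N^\alpha$. On the far part, one uses the size bound i) of Proposition \ref{Thm:KernelEst}, namely $|K_N^\alpha(y,s)|\le C(s^{1/2}+|y|)^{-(n+2)}$, together with a dyadic-annuli decomposition in the parabolic metric to dominate the far part by $C\,(\M_q^-\circ\M_q)f(x,t)$ — in fact even by $\M^-\circ\M f$, but keeping $q>1$ costs nothing and is needed for the weak-type arguments later. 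The only subtlety here is that the parabolic annuli are anisotropic: a ball of radius $R$ in $d$ is a product of a Euclidean ball of radius $R$ in $x$ with an interval of length $R^2$ in $t$, so the two maximal operators $\M_q$ (acting in $x$) and $\M_q^-$ (acting in $t$) must be composed in the right order, and one must exploit that the kernel is supported in $\{s\ge 0\}$ (so the time average is genuinely one-sided, to the left of $t$).

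\textbf{Step 2: the near part via a comparison with $T_{(-M,M)}^\alpha$.} On the near part, one cannot use the size bound alone; instead one compares $T_N^\alpha f(x,t)$ with the ``full'' operator $T_{(-M,M)}^\alpha f$, which is where the first term $(\M^-\circ\M)(T_{(-M,M)}^\alpha f)(x,t)$ enters. Concretely, for $(z,u)$ in the parabolic ball centered at $(x,t)$ of the cutting radius one writes $T_N^\alpha f(x,t)=\bigl(T_N^\alpha f(x,t)-T_{(-M,M)}^\alpha f(z,u)\bigr)+T_{(-M,M)}^\alpha f(z,u)$; averaging the second summand over such $(z,u)$ produces the composed maximal operator of $T_{(-M,M)}^\alpha f$. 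For the difference one decomposes $f=f\mathbf 1_{\text{near}}+f\mathbf 1_{\text{far}}$: the far piece is controlled using the regularity estimates ii) and iii) of Proposition \ref{Thm:KernelEst} (the gradient-in-$y$ bound and the $\partial_s$ bound, which give the Calder\'on--Zygmund smoothness in the two metric directions) by $C(\M_q^-\circ\M_q)f(x,t)$, while the near piece is handled by the uniform $L^q$-boundedness of the operators $T_N^\alpha$ and $T_{(-M,M)}^\alpha$ from Proposition \ref{Thm:PoissonLp}(a) (with weight $\omega\equiv1$) together with Kolmogorov's inequality, to replace the local $L^q$ norm of $T_N^\alpha(f\mathbf 1_{\text{near}})$ by a local average, again bounded by $(\M_q^-\circ\M_q)f(x,t)$. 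Finally one chooses $(z,u)$, or rather averages over $(z,u)$ in the near ball, to make all these bounds effective simultaneously.

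\textbf{Main obstacle.} The routine part is the far-part size estimate; the delicate part is organizing the near-part comparison so that only the single ``envelope'' operator $T_{(-M,M)}^\alpha$ appears (not a supremum over $N$ inside an average, which would be circular), and so that the anisotropy of the parabolic metric is respected — i.e. that a parabolic ball really does factor as (Euclidean ball in $x$)$\times$(time interval), so that the composition $\M^-\circ\M$ (and $\M_q^-\circ\M_q$) correctly dominates the relevant averages, and that the one-sidedness in $t$ (coming from $K_N^\alpha(y,s)=0$ for $s\le0$) is used. Once the geometry is set up, the estimates are the standard Cotlar bookkeeping, and uniformity in $N$ and $M$ is automatic because every constant invoked (Proposition \ref{Thm:KernelEst}, Proposition \ref{Thm:PoissonLp}, Proposition \ref{Thm:L2Estimate}) is independent of $N$.
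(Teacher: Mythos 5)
Your Step~1 does not work. The size estimate $i)$ of Proposition~\ref{Thm:KernelEst}, namely $|K_N^\alpha(y,s)|\le C(s^{1/2}+|y|)^{-(n+2)}$, is the critical Calder\'on--Zygmund decay on the parabolic space of homogeneous dimension $n+2$: integrating it against $|f|$ over dyadic parabolic annuli of radii $2^kR$ produces one copy of the local average of $|f|$ for each $k$, with no geometric gain, so the resulting series diverges. The ``far'' contribution therefore cannot be dominated by $C(\M_q^-\circ\M_q)f$ from $i)$ alone. In the paper the roles are the opposite of what you propose: after the reduction $T_N^\alpha f=T_{(N_1,M)}^\alpha f-T_{(N_2+1,M)}^\alpha f$ (which your sketch omits, and which is precisely what turns the problem into the one-scale situation that a Cotlar argument can handle), the easy piece is the \emph{near} one, $f\chi_{\tilde B_m}$: Lemma~\ref{lem:cotlar}$(i)$ bounds the whole sum $\big|\sum_{j\ge m}v_j(\cdots)\big|$ by $C/a_m^{2}$, so on $\tilde B_m$ the kernel $K_{(m,M)}^\alpha$ is $a_m^{-2}$ times a normalized Gaussian, and a direct integration gives $(\M_q^-\circ\M_q)f$. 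The hard piece, the one that forces the comparison with the envelope $T_{(-M,M)}^\alpha f$, is the \emph{far} one.

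Your Step~2 is also missing a necessary term in the comparison. With $f=f_1+f_2+f_3$ as in the paper and $(x,t)\in\tilde B_{m-1}$, one has $T_{(m,M)}^\alpha f_2(0,0)=-\big(T_{(m,M)}^\alpha f_2(x,t)-T_{(m,M)}^\alpha f_2(0,0)\big)+T_{(-M,M)}^\alpha f(x,t)-T_{(-M,M)}^\alpha f_1(x,t)-T_{(-M,m-1)}^\alpha f_2(x,t)$. The smoothness bounds $ii)$, $iii)$ of Proposition~\ref{Thm:KernelEst} control only the first, point-spread, term (the paper's $A_3$); the last term $T_{(-M,m-1)}^\alpha f_2$ (the paper's $A_4$) is an \emph{operator-truncation} error, not a point-translation error, and it needs the extra geometric decay $\rho^{-2\alpha(k-m+1)}$ supplied by Lemma~\ref{lem:cotlar}$(ii)$. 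Nothing in your sketch plays the role of that lemma, so the comparison does not close as written.
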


For the proof of this theorem we shall need the following lemma.

\begin{lem}\label{lem:cotlar}  Let  $\{a_j\}_{j\in \mathbb Z}$ be a $\rho$-lacunary sequence and $\{v_j\}_{j\in \mathbb Z} \in \ell^\infty(\mathbb Z)$. Then
\begin{itemize}
\item[(i)] $\displaystyle \abs{\sum_{j=m}^{M}v_j \left(\frac{ a_{j+1}^{2\a}e^{-a_{j+1}^2/(4 s)}-a_j^{2\a} e^{-a_j^2/(4 s)}}{s^{1+\a}} \right) } \le { {C_{\rho, v, \alpha}} \over a_m^{2}}, $
\

\item[(ii)] if $k\ge m$ and $t,s\in \real$  with $|t-s|\ge c a_k^{2}(c>0)$,  \begin{align*}
\abs{\sum_{j=-M}^{m-1}v_j \frac{ a_{j+1}^{2\a}e^{-{a_{j+1}^2\over 4 (t-s)}}-a_j^{2\a} e^{-{a_j^2\over 4 (t-s)}}}{(t-s)^{1+\a}}} \, \le C_{\rho, v, \alpha}\frac1{a_k^{2}}\rho^{-2\alpha(k-m+1)}.
\end{align*}

\end{itemize}
\end{lem}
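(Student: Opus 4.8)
The plan is to handle both parts (i) and (ii) with the same device. Since $\{v_j\}_{j\in\ent}\in\ell^\infty(\ent)$, I would first discard the $v_j$'s at the cost of a factor $\norm{v}_{\ell^\infty}$ and reduce each estimate to a sum of absolute values of the differences $a_{j+1}^{2\a}e^{-a_{j+1}^2/(4s)}-a_j^{2\a}e^{-a_j^2/(4s)}$. Writing each such difference, via the fundamental theorem of calculus, as
\begin{equation*}
a_{j+1}^{2\a}e^{-a_{j+1}^2/(4s)}-a_j^{2\a}e^{-a_j^2/(4s)}=\int_{a_j}^{a_{j+1}}\Big(2\a u^{2\a-1}-\frac{u^{2\a+1}}{2s}\Big)e^{-u^2/(4s)}\,du
\end{equation*}
and using that the intervals $[a_j,a_{j+1}]$ do not overlap, the triangle inequality collapses the whole sum of absolute values into a single integral of $\big(2\a u^{2\a-1}+\tfrac{u^{2\a+1}}{2s}\big)e^{-u^2/(4s)}$ over the relevant range of $u$ — exactly the manipulation already carried out in the proof of Proposition \ref{Thm:KernelEst}.

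For (i) the range of $u$ is $[a_m,\infty)$, so after this reduction I am left to bound $\dfrac{\norm{v}_{\ell^\infty}}{s^{1+\a}}\int_{a_m}^{\infty}\big(2\a u^{2\a-1}+\tfrac{u^{2\a+1}}{2s}\big)e^{-u^2/(4s)}\,du$. Substituting $v=u/\sqrt s$ and setting $w=a_m/\sqrt s$ (so $1/s=w^2/a_m^2$), this equals
\begin{equation*}
\frac{\norm{v}_{\ell^\infty}}{a_m^{2}}\;w^2\int_{w}^{\infty}\Big(2\a v^{2\a-1}+\tfrac12 v^{2\a+1}\Big)e^{-v^2/4}\,dv ,
\end{equation*}
and it then suffices to prove $\sup_{w>0}w^2\int_w^\infty(v^{2\a-1}+v^{2\a+1})e^{-v^2/4}\,dv<\infty$. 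I would split into two regimes: for $0<w\le1$ the integral is dominated by $\int_0^\infty(v^{2\a-1}+v^{2\a+1})e^{-v^2/4}\,dv<\infty$, which is finite precisely because $\a>0$ makes $v^{2\a-1}$ integrable at the origin, while $w^2\le1$; for $w>1$ the function $(v^{2\a-1}+v^{2\a+1})e^{-v^2/8}$ is bounded on $[1,\infty)$, so the integral is $\le C\int_w^\infty e^{-v^2/8}\,dv\le Cw^{-1}e^{-w^2/8}$, whence $w^2\cdot w^{-1}e^{-w^2/8}=we^{-w^2/8}\le C$. This is the only step that is not completely mechanical: one must separate the two ranges of $w$ and, when $\a<1/2$, invoke the integrability of $v^{2\a-1}$ near the origin.

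For (ii) put $\tau=t-s$; the hypothesis $|t-s|\ge ca_k^2$, together with the convention $K_N^\a(\,\cdot\,,s)\equiv0$ for $s\le0$, lets us work with $\tau\ge ca_k^2>0$. Now the indices satisfy $j\le m-1$, so the intervals $[a_j,a_{j+1}]$ fill up $(0,a_m]$ and the same reduction bounds the left-hand side by
\begin{equation*}
\frac{\norm{v}_{\ell^\infty}}{\tau^{1+\a}}\int_0^{a_m}\Big(2\a u^{2\a-1}+\frac{u^{2\a+1}}{2\tau}\Big)e^{-u^2/(4\tau)}\,du\le\norm{v}_{\ell^\infty}\Big(\frac{a_m^{2\a}}{\tau^{1+\a}}+\frac{a_m^{2\a+2}}{(4\a+4)\,\tau^{2+\a}}\Big),
\end{equation*}
where I merely used $e^{-u^2/(4\tau)}\le1$ and integrated the two monomials. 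Inserting $\tau\ge ca_k^2$ turns this into $C\,a_k^{-2}\big((a_m/a_k)^{2\a}+(a_m/a_k)^{2\a+2}\big)$; then lacunarity gives $a_k/a_m\ge\rho^{\,k-m}$ (valid since $k\ge m$), and since $\rho>1$ both powers are $\le\rho^{-2\a(k-m)}=\rho^{2\a}\rho^{-2\a(k-m+1)}$, so absorbing $\rho^{2\a}$ and $c$ into $C_{\rho,v,\a}$ finishes (ii). All of (ii) is routine bookkeeping once the crude bound $e^{-u^2/(4\tau)}\le1$ is used and the lacunary ratio $a_m/a_k\le\rho^{-(k-m)}$ is exploited.
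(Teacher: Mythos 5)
Your proof is correct. For part (ii) the route is essentially the paper's: the paper applies the mean value theorem to obtain intermediate points $\xi_j\in[a_j,a_{j+1}]$, bounds $e^{-\xi_j^2/(4(t-s))}\le 1$, inserts $|t-s|\ge c\,a_k^2$, and sums the resulting lacunary geometric series; your version replaces the mean value theorem by the equivalent fundamental-theorem-of-calculus identity and then integrates the two monomials explicitly over $(0,a_m]$, which is only cosmetically different.

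For part (i), however, you take a genuinely different path. The paper does \emph{not} collapse the telescoping sum: it bounds each term $a_j^{2\a}e^{-a_j^2/(4s)}/s^{1+\a}$ directly by $C_\a/a_j^2$ (via $\sup_{x>0}x^{1+\a}e^{-x}<\infty$) and then controls $\sum_{j\ge m}\bigl(a_{j+1}^{-2}+a_j^{-2}\bigr)$ by a lacunary geometric series, $\le C_\rho\,a_m^{-2}$. You instead collapse the sum via the fundamental theorem of calculus into a single integral $s^{-(1+\a)}\int_{a_m}^\infty(2\a u^{2\a-1}+u^{2\a+1}/(2s))e^{-u^2/(4s)}\,du$, rescale by $\sqrt s$, and reduce to the scale-invariant estimate $\sup_{w>0}w^2\int_w^\infty(v^{2\a-1}+v^{2\a+1})e^{-v^2/4}\,dv<\infty$, settled by splitting $w\le 1$ (where $v^{2\a-1}$ is integrable at $0$ since $\a>0$) from $w>1$ (Gaussian tail). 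Both arguments work; the paper's termwise bound avoids any integral estimate and is a bit shorter, while your collapse makes (i) and (ii) structurally identical and is the same manipulation the paper itself uses in the growth bound of Proposition \ref{Thm:KernelEst}, so it is arguably more consistent with the rest of the article. Your handling of the convention $t-s>0$ in (ii) is also correct and worth making explicit, since the paper leaves it implicit.
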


\begin{proof}
For $(i)$, since $ \rho\le \frac{a_{j+1}}{a_j}\le \rho^2,$ we have
\begin{multline*}
\abs{\sum_{j=m}^{M}v_j \left(\frac{ a_{j+1}^{2\a}e^{-a_{j+1}^2/(4 s)}-a_j^{2\a} e^{-a_j^2/(4 s)}}{s^{1+\a}} \right) }
\le  \norm{v}_{\ell^\infty(\mathbb Z)} \sum_{j=m}^{M} \left({1\over a_{j+1}^2}+{1\over a_j^2}\right)
\\  \le C_{v,\alpha}{1\over a_m^{2}}  \sum_{j=m}^{M}  {\frac{\rho^2+1}{\rho^2}}\cdot {a_m^{2}\over a_j^2}= C_{\rho, v, \alpha} {1\over a_m^{2}} \sum_{j=m}^{M}  \abs{a_m\over a_j}^{2}   \le C_{\rho, v, \alpha}  {1\over a_m^{2}} {\rho^{2}\over \rho^{2}-1 }  \le C_{\rho, v, \alpha} {1\over a_m^{2}}.
\end{multline*}

Now we shall prove $(ii)$. By the mean value theorem, there exist $\xi_j$ with $a_j\le \xi_j\le a_{j+1}$ such that
{\begin{align*}
& \abs{\sum_{j=-M}^{m-1}v_j \frac{ a_{j+1}^{2\a}e^{-{a_{j+1}^2\over 4 (t-s)}}-a_j^{2\a} e^{-{a_j^2\over 4 (t-s)}}}{(t-s)^{1+\a}}}=\abs{\sum_{j=-M}^{m-1}v_j \frac{ (a_{j+1}-a_j)2\xi_j^{2\alpha-1}e^{-{\xi_j^2\over 4 (t-s)}} \left(\alpha-\frac{\xi_j^2}{4(t-s)}  \right)}{(t-s)^{1+\a}}}\\
&\le  \norm{v}_{\ell^\infty(\mathbb Z)} \left(\abs{\sum_{j=-M}^{m-1} \frac{(a_{j+1}-a_j)2\alpha\xi_j^{2\alpha-1}  e^{-{\xi_j^2\over 4 (t-s)}}}{(t-s)^{1+\a}}}+ \abs{\sum_{j=-M}^{m-1} \frac{(a_{j+1}-a_j)2\xi_j^{2\alpha+1}  e^{-{\xi_j^2\over 4 (t-s)}}}{(t-s)^{2+\a}}}\right)\\
&\le C_{\rho, v, \alpha}\left( \sum_{j=-M}^{m-1} {\frac{ a_j^{2\alpha} }{\abs{t-s}^{1+\alpha}} } + \sum_{j=-M}^{m-1} {\frac{ a_j^{2\alpha+2} }{\abs{t-s}^{2+\alpha}} }  \right)
\le C_{\rho, v, \alpha} {1\over a_k^{2}}\left(\sum_{j=-M}^{m-1} \abs{\frac{ a_j }{a_k }}^{2\alpha}+ \sum_{j=-M}^{m-1} \abs{\frac{ a_j }{a_k }}^{2\alpha+1}   \right)\\
&\le C_{\rho, v, \alpha}  {1\over a_k^{2}}\rho^{-2\alpha(k-m+1)},
\end{align*}}
where we have used that $k \ge m.$
\end{proof}

With Lemma \ref{lem:cotlar}, we can give the proof of Theorem \ref{Thm:Maximalcontrol}.

\begin{proof}[Proof of Theorem \ref{Thm:Maximalcontrol}]
Since the operators $T_N^\alpha$ are given by convolutions, they are invariant under translations, and therefore it is enough to prove the theorem for $(x,t)=(0,0).$ Observe that, for $N=(N_1, N_2),$
$$T_N^\alpha f(x,t)=T_{(N_1, M)}^\alpha f(x,t)-T_{(N_2+1, M)}^\alpha f(x,t),$$
with $-M\le N_1<N_2\le M.$
Then, it suffices to estimate $\abs{T_{(m, M)}^\alpha f(0,0)}$ for $\abs{m}\le M$ with constants independent of $m$ and $M.$
Let $B_m=B(0, a_m)$ denote the  ball with center $0$ and radius $a_m$ in $\real^{n}$, and let $\tilde{B}_m=B_m\times [-a_m^2, 0]\subset \real^{n+1}$. Then, we decompose
\begin{align*}
f&=f\chi_{\tilde B_m}+f\chi_{{\tilde B_m}^c}
=f\chi_{\tilde B_m} +f\chi_{B^c_m\times (-\infty, -a_m^2)}+f\chi_{B^c_m\times (0, +\infty)}=:f_1+f_2+f_3.
\end{align*}
We should note that $T_{(m, M)}^\alpha f_3(0,0)=0$ by the definition of $T_{(m, M)}^\alpha.$ Then, we have
\begin{align*}
\abs{T_{(m,M)}^\alpha f(0,0)}&\le \abs{T_{(m,M)}^\alpha f_1(0,0)}+\abs{T_{(m,M)}^\alpha f_2(0,0)}=: I+II.
\end{align*}
For $I$, by Lemma \ref{lem:cotlar} we have
\begin{align*}
I&=C_{n,\alpha} \abs{\int_{\real^{n+1}} \sum_{j=m}^{M}v_j \frac{ a_{j+1}^{2\a}e^{-a_{j+1}^2/(4 s)}-a_j^{2\a} e^{-a_j^2/(4 s)}}{s^{1+\a}} \frac{e^{-|y|^2/(4s)}}{(4\pi s)^{n/2}}  f_1(-y,-s)dyds}\\
&\le C_{n, \alpha} \norm{v}_{\ell^\infty(\mathbb Z)}\int_{-a_m^2}^0\int_{B_m} \sum_{j=m}^{M} \abs{\frac{ a_{j+1}^{2\a}e^{-a_{j+1}^2/(4 s)}-a_j^{2\a} e^{-a_j^2/(4 s)}}{s^{1+\a}}} {\frac{e^{-|y|^2/(4s)}}{(4\pi s)^{n/2}}}  \abs{f(-y,-s)}dyds\\
&\le C_{n,\alpha,v,\rho} \frac{1}{a_m^2}\int_{-a_m^2}^0\int_{B_m} {\frac{e^{-|y|^2/(4s)}}{(4\pi s)^{n/2}}}  \abs{f(-y,-s)}dyds\\
&\le C_{n,\alpha,v,\rho} \frac{1}{a_m^2}\int_{-a_m^2}^{0} \M_q f(0, -s))  ds\\
&\le C_{n,\alpha,v,\rho}  \left(\M_q^-\circ\M_q\right) f(0,0).
\end{align*}
For part $II$,
\begin{align*}\label{equ:II}
II&= \frac{1}{\abs{\tilde B_{m-1}}} \int_{\tilde B_{m-1}}\abs{T_{(m,M)}^\alpha f_2(0,0)}dxdt\\
&\le \frac{1}{\abs{\tilde B_{m-1}}} \int_{\tilde B_{m-1}}\abs{T_{(-M,M)}^\alpha f(x,t)}dxdt\\
&\quad +\frac{1}{\abs{\tilde B_{m-1}}} \int_{\tilde B_{m-1}}\abs{T_{(-M,M)}^\alpha f_1(x,t)}dxdt\\
&\quad +\frac{1}{\abs{\tilde B_{m-1}}} \int_{\tilde B_{m-1}}\abs{T_{(m,M)}^\alpha f_2(x,t)-T_{(m,M)}^\alpha f_2(0,0)}dxdt\\
&\quad +\frac{1}{\abs{\tilde B_{m-1}}} \int_{\tilde B_{m-1}}\abs{T_{(-M,m-1)}^\alpha f_2(x,t)}dxdt\\
&=:A_1+A_2+A_3+A_4.
\end{align*}
(If $m=-M$, we understand that $A_4=0$.) It is clear that
\begin{equation*}
A_1=\frac{1}{\abs{\tilde B_{m-1}}}\int_{-a_{m-1}^2}^0 \int_{B_{m-1}}\abs{T_{(-M,M)}^\alpha f(x,t)}dxdt\le C \left(\M^-\circ \M\right) (T_{(-M,M)}^\alpha f)(0,0).
\end{equation*}
For $A_2,$ by H\"older's inequality and the uniform $L^q$-boundedness of $T_N^\alpha$, we get
\begin{align*}
A_2&\le \left( \frac{1}{\abs{\tilde B_{m-1}}} \int_{\tilde B_{m-1}}\abs{T_{(-M,M)}^\alpha f_1(x,t)}^qdxdt\right)^{1/q}\\
&\le C \left( \frac{1}{\abs{\tilde B_{m-1}}} \int_{\real^{n+1}}\abs{ f_1(x,t)}^qdxdt\right)^{1/q}\\
&\le C\left(\M_q^-\circ \M_q \right) f(0,0).
\end{align*}
For $A_3$, with $(x,t)\in \tilde B_{m-1}$ we have
\begin{align*}
&\abs{T_{(m,M)}^\alpha f_2(x,t)-T_{(m,M)}^\alpha f_2(0,0)}\\
&=\abs{\int_{-a_{m}^2}^0\int_{B^c_{m}}K_{(m, M)}^\alpha(x-y,t-s)f(y,s)dyds -\int_{-a_{m}^2}^0\int_{B^c_{m}}K_{(m, M)}^\alpha(-y,-s)f(y,s)dyds}\\
&\le \int_{-a_{m}^2}^0\int_{B^c_{m}}\abs{K_{(m, M)}^\alpha(x-y,t-s)-K_{(m, M)}^\alpha(-y,-s)}\abs{f(y,s)}dyds\\
&=\sum_{k=m}^{+\infty}\int_{-a_{k+1}^2}^{-a_{k}^2}\int_{B_{k+1}\setminus B_k}\abs{K_{(m, M)}^\alpha(x-y,t-s)-K_{(m, M)}^\alpha(-y,-s)}\abs{f(y,s)}dyds.
\end{align*}
By the mean value theorem, we know that  there exists  $\theta$ with $0<\theta<1$  such that
\begin{align}\label{equ:mean}
&\sum_{k=m}^{+\infty}\abs{K_{(m, M)}^\alpha(x-y,t-s)-K_{(m, M)}^\alpha(-y,-s)}\nonumber \\
&\le \sum_{k=m}^{+\infty}\big(\abs{\bigtriangledown_{z} K_{(m, M)}^\alpha(z, \theta t-s)\big|_{z=\theta x-y}}|x|+\abs{\partial_{r} K_{(m, M)}^\alpha(\theta x-y, r)\big|_{r=\theta t-s}}|t|\big)\nonumber \\
&{\le C\sum_{k=m}^{+\infty} \left({|x|\over \left(\abs{\theta x-y}+\abs{\theta t-s}^{1/2}\right)^{n+3}}+{|t|\over \left(\abs{\theta x-y}+\abs{\theta t-s}^{1/2}\right)^{n+4}}\right)}\\
&\le C\sum_{k=m}^{+\infty} {a_{m-1}\over a_k^{n+3}},\nonumber
\end{align}
where we have used that in each summand $(x,t)\in \tilde B_{m-1}$ and $(y,s)\in \tilde B_{k+1}\backslash \tilde B_{k}$.
Hence
\begin{align*}
&\abs{T_{(m,M)}^\alpha f_2(x,t)-T_{(m,M)}^\alpha f_2(0,0)}
\le C\sum_{k=m}^{+\infty}\int_{-a_{k+1}^2}^{-a_{k}^2}\int_{B_{k+1}\setminus B_k} {a_{m-1}\over a_k^{n+3}}\abs{f(y,s)}dyds\\
&\le C\sum_{k=m}^{+\infty}{a_{m-1}\over a_k}\cdot {1\over a_k^{n+2}}\int_{-a_{k+1}^2}^{-a_{k}^2}\int_{B_{k+1}}\abs{f(y,s)}dyds\\
&\le C\sum_{k=m}^{+\infty}{a_{m-1}\over a_k}\cdot {\rho^{2n+2}\over a_{k+1}^{2}}\int_{-a_{k+1}^2}^{-a_{k}^2}\left({1\over a_{k+1}^n}\int_{B_{k+1}}\abs{f(y,s)}dy\right)ds\\
&\le C\sum_{k=m}^{+\infty}{a_{m-1}\over a_k}\cdot {1\over a_{k+1}^{2}}\int_{-a_{k+1}^2}^{-a_{k}^2}\M f(0,s)ds\\
&\le C \left(\M^-\circ\M\right) f(0,0)\sum_{k=m}^{+\infty}{a_{m-1}\over a_k}\\
&\le C\left(\M^-\circ\M\right) f(0,0).
\end{align*}
Therefore,
$$A_3\le C\left(\M^-\circ\M\right) f(0,0).$$
For the last term $A_4,$  we have
\begin{align*}
A_4
&=\frac{1}{\abs{\tilde B_{m-1}}} \int_{\tilde B_{m-1}}\abs{\int_{\tilde B_m^c} K_{(-M, m-1)}^\alpha (x-y, t-s)f(y,s)dyds}dxdt\\
&\le \frac{1}{\abs{\tilde B_{m-1}}} \int_{\tilde B_{m-1}}\int_{\tilde B_m^c} \abs{K_{(-M, m-1)}^\alpha (x-y, t-s)}\abs{f(y,s)}dyds~dxdt.
%&\le C\frac{1}{\abs{4B\backslash 2B}} \int_{B} \int_{4B\backslash 2B} \frac{1}{(|t-s|^{1/2}+\abs{x-y})^{n+2}}dxdt \abs{f(y,s)}dyds\\
%&\le C\int_{B}  \abs{f(y,s)}dyds\le C\left(\frac{1}{|B|}\int_{B}  \abs{f(y,s)}^qdyds\right)^{1/q} \le C\M_q f(0,0).
\end{align*}
Since  $(y,s)\in \tilde B_m^c,$ $(x,t)\in \tilde B_{m-1}$, and $\{a_j\}_{j\in \mathbb Z}$ is a $\rho$-lacunary sequence, we have $\abs{(x-y, t-s)}\sim \abs{(y, s)}.$ Then, by Lemma \ref{lem:cotlar} we get
\begin{align*}
&\int_{\tilde B_m^c} \abs{K_{(-M, m-1)}^\alpha (x-y, t-s)}\abs{f(y,s)}dyds\\
&\le C_{n,\alpha} \sum_{k=m}^{+\infty} \int_{\tilde B_{k+1}\setminus \tilde B_{k}} \abs{\sum_{j=-M}^{m-1}v_j \frac{ a_{j+1}^{2\a}e^{-{a_{j+1}^2\over 4 (t-s)}}-a_j^{2\a} e^{-{a_j^2\over 4 (t-s)}}}{(t-s)^{1+\a}}} \frac{e^{-{|x-y|^2\over 4|t-s|}}}{(4\pi |t-s|)^{n/2}}\abs{f(y,s)}dyds\\
&\le C_{n, \alpha, v, \rho} \sum_{k=m}^{+\infty} {\rho^{-2\alpha(k-m+1)}\over a_{k+1}^2} \int_{-a_{k+1}^2}^{-a_k^2}\int_{ B_{k+1}}  \frac{e^{-{|x-y|^2\over 4|s|}}}{(4\pi |s|)^{n/2}}\abs{f(y,s)}dyds\\
&\le C_{n, \alpha, v, \rho} \sum_{k=m}^{+\infty}\rho^{-2\alpha(k-m+1)}\cdot {1\over a_{k+1}^2} \int_{-a_{k+1}^2}^{0} \M f(0,s )ds\\
&\le C_{n, \alpha, v, \rho}\left(\M^-\circ \M\right) f(0,0) \sum_{k=m}^{+\infty}\rho^{-2\alpha(k-m+1)}\\
&\le C_{n, \alpha, v, \rho}\left(\M^-\circ \M\right) f(0,0).
\end{align*}
Hence, $$A_4\le C \left(\M^-\circ \M\right) f(0,0).$$
Combining the estimates above for $A_1, A_2, A_3$ and $A_4$, we get
$$II\le C \left(\M_q^-\circ \M_q\right) f(0,0).$$
And hence
$$ \abs{T_{(m,M)}^\alpha f(0,0)}\le C\left( \M^-\circ \M\right) (T_{(-M,M)}^\alpha f)(0,0)+C\left(\M_q^-\circ \M_q\right) f(0,0). $$
As the constants $C$ appeared above all only depend on $n, \alpha, \rho$ and $\norm{v}_{\ell^\infty(\mathbb Z)}$, we complete the proof of the theorem.
\end{proof}
Now,  we can start  the proof of Theorem \ref{Thm:LpBoundOsci}.
\begin{proof}[Proof of Theorem \ref{Thm:LpBoundOsci}]
$(a).$ For each $\omega\in A_p(\real^{n+1}),$ we can choose $1<q<p$ such that $\omega\in A_{p/q}(\real^{n+1}).$ Then,  it is well known that the maximal operators $\M_-\circ  \M,$ $\M_q^-\circ \M_q$ are bounded on $L^p(\real^{n+1}, \omega)$. Then, by Theorem \ref{Thm:Maximalcontrol} and the  uniform $L^p$-boundedness of $T_N^\alpha$ in {Proposition} \ref{Thm:PoissonLp}, we have
\begin{align*}
\norm{T_M^*f}_{L^p(\real^{n+1}, \omega)}&\le C\left(\norm{\left( \M^-\circ \M\right) (T_{(-M, M)}^\alpha f)}_{L^p(\real^{n+1}, \omega)}+\norm{\left(\M_q^-\circ \M_q\right) f}_{L^p(\real^{n+1}, \omega)}\right)\\
&\le C\left(\norm{T_{(-M, M)}^\alpha f}_{L^p(\real^{n+1}, \omega)}+\norm{f}_{L^p(\real^{n+1}, \omega)}\right)\le C\norm{f}_{L^p(\real^{n+1}, \omega)}.
\end{align*}
\iffalse
And, by Theorem \ref{Thm:Maximalcontrol},  weak type $(1,1)$ of $\M$ and the uniformly boundedness in Theorem \ref{Thm:PoissonLp}, for any $\lambda>0,$ we have
\begin{align*}
&\abs{\{(x,t)\in \real^{n+1}:\abs{T_M^*f(x,t)}>\lambda\}}\\
&\le \abs{\left\{(x,t)\in \real^{n+1}:\abs{\M (T_{-M, M}^\alpha f)(x,t)}>\frac{\lambda}{2C}\right\}}+\abs{\left\{(x,t)\in \real^{n+1}:\abs{\M (f)(x,t)}>\frac{\lambda}{2C}\right\}}\\
&\le \abs{\left\{(x,t)\in \real^{n+1}:\abs{T_{-M, M}^\alpha f(x,t)}>\frac{\lambda}{2C}\right\}}+\abs{\left\{(x,t)\in \real^{n+1}:\abs{\M (f)(x,t)}>\frac{\lambda}{2C}\right\}}\\
&\le C\norm{f}_{L^1}.
\end{align*}
\fi
We should note that the constants $C$ appeared above do not depend on $M$. { And the     operator $T_M^*$ is monotonic  with respect to $M$.} Consequently, letting $M$ increase to infinity, we get the proof of the $L^p$-boundedness of $T^*$. This completes the proof of part $(a)$ of  Theorem \ref{Thm:LpBoundOsci}.

In order to prove $(b)$, we consider the $\ell^\infty(\mathbb Z^2)$-valued operator
$\mathcal{T}f(x, t) = \{  T_N^\alpha f(x, t) \}_{N\in \mathbb Z^2}$. Since $\|\mathcal{T}f(x,t) \|_{\ell^\infty(\mathbb Z^2)}= T^*f(x,t)$,   by using $(a)$ we know that the operator $\mathcal{T}$ is bounded from $L^p(\mathbb{R}^{n+1}) $ into $L^p_{\ell^\infty(\mathbb Z^2)}(\mathbb{R}^{n+1}, \omega) $, for every $1<p<\infty$ and $\omega\in A_p(\real^{n+1})$. The kernel of the operator $\mathcal{T}$ is given by $\mathcal{K}^\alpha(t) = \{ K^\alpha_N(t)\} _{N\in \mathbb Z^2}$. By {Proposition} \ref{Thm:KernelEst} and the vector valued version of Theorem 7.12 in \cite{Duo}, we get that the operator $\mathcal{T}$ is bounded from $L^1(\mathbb{R}^{n+1}, \omega)$ into weak- $L^1_{\ell^\infty(\mathbb Z^2)}(\mathbb{R}^{n+1}, \omega)$ for $\omega\in A_1(\real^{n+1})$. Hence, as $\|\mathcal{T}f(x, t) \|_{\ell^\infty(\mathbb Z^2)}= T^*f(x, t)$, we get the proof of  $(b)$.

For $(c)$  and $(d)$, since $L^\infty(\real^n)\subset BMO(\real^{n}),$ we only need to prove $(d).$ Let $(x_0,t_0)\in \real^{n+1}$ be one point such that  $T^*f(x_0,t_0)<\infty.$ Given $(x,t)\neq (x_0,t_0),$ denote $d_0=d((x,t), (x_0,t_0)).$
Set $B=B(x_0, 4d_0)$ and $\tilde B=B\times [t_0-4d_0^2, t_0+4d_0^2]$. And we decompose $f$ to be
\begin{equation*}
f=(f-f_{\tilde B})\chi_{\tilde B}+(f-f_{\tilde B})\chi_{{\tilde B}^c}+f_{\tilde B}=:f_1+f_2+f_3.
\end{equation*}
Note that $T^*$ is $L^p$-bounded for any $1<p<\infty.$ Then $T^*f_1(x,t)<\infty$, because $f_1\in L^p(\mathbb R^{n+1})$ for any $1<p<\infty.$ And  $T^* f_3=0$, since $\P_{a_j}^\alpha f_3=f_3$ for any $j\in \mathbb Z.$
On the other hand, by the same argument in \eqref{equ:mean}, we have
\begin{align*}
&\Big|T_N^\alpha f_2(x,t)-T_N^\alpha f_2(x_0,t_0)\Big|\\&=\Big|\int_{\real^{n+1}} K_N^\alpha (x-y, t-s)f_2(y,s)dyds-\int_{\real^{n+1}} K_N^\alpha (x_0- y, t_0-s)f_2(y,s)dyds \Big|\\
&=\Big| \int_{\tilde B^c}\left(K_N^\alpha(x-y, t-s) - K_N^\alpha (x_0- y, t_0-s)\right)f_2(y,s)dyds\Big|\\
&\le C \int_{\tilde B^c} \left( \frac{d_0}{\abs{(y,s)-(x_0,t_0)}^{n+3}}+ \frac{d_0^2}{\abs{(y,s)-(x_0,t_0)}^{n+4}}\right)\abs{f(y,s)-f_{\tilde B}}dyds  \\
&\le C \sum_{k=1}^{+\infty}{ d_0} \int_{2^{k} \tilde B\setminus 2^{k-1}\tilde B}  {\abs{f(y,s)-f_{\tilde B}}\over |(y,s)-(x_0,t_0)|^{n+3}}dyds\\
&\le C \sum_{k=1}^{+\infty}{ d_0\over (2^{k+1}d_0)^{n+3}} \int_{2^k\tilde B}  {\abs{f(y,s)-f_{\tilde B}}}dy\\
&\le C \sum_{k=1}^{+\infty}2^{-(k+1)}{ 1\over \abs{2^{k}\tilde B}} \int_{2^{k}\tilde B} \left(  {\abs{f(y,s)-f_{2^k\tilde B}}}+\sum_{l=1}^{k}\abs{f_{2^l\tilde B}-f_{2^{l-1}\tilde B}}\right)dy\\
&\le C\sum_{k=1}^{+\infty}2^{-(k+1)}{ 1\over \abs{2^{k}\tilde B}} \int_{2^{k}\tilde B} \left(  {\abs{f(y,s)-f_{2^k\tilde B}}}+2k\norm{f}_{BMO(\real^{n+1})}\right)dy\\
&\le C\sum_{k=1}^{+\infty}2^{-(k+1)}{(1+2k)\norm{f}_{BMO(\real^{n+1})}}\\
&\le  C\norm{f}_{BMO(\real^{n+1})},
 \end{align*}
where $2^k\tilde B=B(x_0, 2^{k}\cdot 4d_0)\times[t_0-2^k\cdot 4d_0^2, t_0+2^k\cdot 4d_0^2]$ for any $k\in \mathbb N.$
 Hence
  \begin{align*}
\norm{T_N^\alpha f_2(x,t)-T_N^\alpha f_2(x_0,t_0)}_{\ell^\infty(\mathbb Z^2)} \le C\norm{f}_{BMO(\mathbb R^{n+1})}
\end{align*}
and therefore $ T^*f(x,t) = \norm{T_N^\alpha f(x,t)}_{\ell^\infty(\mathbb Z^2)}  \le C < \infty.$

Now, we shall prove  the  estimate (\ref{sharp}) for functions such that $T^*f(x,t) < \infty \, \, a.e.$
For any $r>0$ and $(x_0,t_0)$ such that $T^*f(x_0,t_0) < \infty$,  let  $B=B(x_0, r)$, $\tilde B=B\times[t_0-r^2, t_0+r^2]$ and $\displaystyle f_{\tilde B}={1\over |\tilde B|}\int_{\tilde B} f(x,t)dxdt.$  Let
$$f=(f-f_{\tilde B})\chi_{2{\tilde B}}+(f-f_{\tilde B})\chi_{(2{\tilde B})^c}+f_{\tilde B}=:f_1+f_2+f_3.$$
  We have $T^*f_3(x,t)=0.$
And,
\begin{align*}
&{1\over |{\tilde B}|}\int_{{\tilde B}}\abs{T^*f(x,t)-(T^*f)_{\tilde B}}dxdt={1\over |{\tilde B}|}\int_{{\tilde B}}\abs{{1\over |{\tilde B}|}\int_{{\tilde B}}\left(T^*f(x,t)-T^*f(y,s)\right)dyds}dxdt\\
&\le {1\over |{\tilde B}|^2}\int_{{\tilde B}}\int_{{\tilde B}}\abs{T^*f(x,t)-T^*f(y,s)}dyds~dxdt\\
&={1\over |{\tilde B}|^2}\int_{{\tilde B}}\int_{{\tilde B}}\abs{\norm{T^\alpha_Nf(x,t)}_{\ell^\infty(\mathbb Z^2)}-\norm{T^\alpha_Nf(y,s)}_{\ell^\infty(\mathbb Z^2)}}dyds~dxdt\\
&\le {1\over |{\tilde B}|^2}\int_{\tilde B}\int_{\tilde B}{\norm{T^\alpha_Nf(x,t)-T^\alpha_Nf(y,s)}_{\ell^\infty(\mathbb Z^2)}}dyds~dxdt\\
&\le {1\over |{\tilde B}|^2}\int_{\tilde B}\int_{\tilde B}{\norm{T^\alpha_Nf_1(x,t)-T^\alpha_N f_1(y,s)}_{\ell^\infty(\mathbb Z^2)}}dyds~dxdt\\
&\quad + {1\over |{\tilde B}|^2}\int_{\tilde B}\int_{\tilde B}{\norm{T^\alpha_Nf_2(x,t)-T^\alpha_Nf_2(y,s)}_{\ell^\infty(\mathbb Z^2)}}dyds~dxdt\\
&=:I+II.
\end{align*}
The H\"older inequality and  $L^2$-boundedness of $T^*$ imply that
\begin{align*}
I&\le  {1\over |{\tilde B}|}\int_{\tilde B}{\norm{T^\alpha_Nf_1(x,t)}_{\ell^\infty(\mathbb Z^2)}}dxdt
      +{1\over |{\tilde B}|}\int_{\tilde B}{\norm{T^\alpha_Nf_1(y,s)}_{\ell^\infty(\mathbb Z^2)}}dyds\\
  &\le \left({1\over |{\tilde B}|}\int_{\tilde B}{\norm{T^\alpha_Nf_1(x,t)}^2_{\ell^\infty(\mathbb Z^2)}}dxdt\right)^{1/2}
      +\left({1\over |{\tilde B}|}\int_{\tilde B}{\norm{T^\alpha_Nf_1(y,s)}^2_{\ell^\infty(\mathbb Z^2)}}dyds\right)^{1/2}\\
  &\le C{1\over |{\tilde B}|^{1/2}}\norm{f_1}_{L^2(\mathbb R^{n+1})}\le  C\norm{f}_{BMO(\mathbb R^{n+1})}.
\end{align*}
For $II$, since $(x,t), (y,s) \in {\tilde B}$ and the support of $f_2$ is $(2{\tilde B})^c$, by a similar argument as in \eqref{equ:mean}
we have
\begin{align*}
&\Big|T_N^\alpha f_2(x,t)-T_N^\alpha f_2(y,s)\Big|\\
&=\Big|\int_{\real^{n+1}} K_N^\alpha (x-z,t-u)f_2(z,u)dzdu-\int_{\real^{n+1}} K_N^\alpha (y-z,s-u)f_2(z,u)dzdu \Big|\\
&=\Big| \int_{(2{\tilde B})^c}\left(K_N^\alpha(x-z,t-u) - K_N^\alpha (y-z,s-u)\right)f_2(z,u)dzdu\Big|\\
\iffalse &\le \int_{(2{\tilde B})^c}\Big(\abs{\bigtriangledown_{z} K_{m, M}^\alpha(\theta( x-y)-z, \theta(t-s)-u)}|x-y|\\
&\quad \quad\quad \quad\quad \quad +\abs{\partial_{u} K_{m, M}^\alpha(\theta( x-y)-z, \theta(t-s)-u)}|t-s|\red{?????}\Big)\abs{f(z,u)-f_{ B}}dzdu\\
&\le C\int_{(2 {\tilde B})^c} \Big({|x-y|\abs{f(z,u)-f_{ {\tilde B}}}\over \left(\abs{\theta( x-y)-z}+\abs{\theta(t-s)-u}^{1/2}\right)^{n+3}}\\
&\quad\quad \quad\quad \quad\quad\quad\quad \quad\quad \quad\quad\quad  \quad\quad \quad\quad +{|t-s|\abs{f(z,u)-f_{B}}\over \left(\abs{\theta( x-y)-z}+\abs{\theta(t-s)-u}^{1/2}\right)^{n+4}}\Big)dzdu\\\fi
&\le C \int_{(2 {\tilde B})^c} \left( \frac{2r}{\abs{|z|+|u|^{1/2}}^{n+3}}+ \frac{2r^2}{\abs{|z|+|u|^{1/2}}^{n+4}}\right)\abs{f(z,u)-f_{{\tilde B}}}dzdu  \\
&\le C \sum_{k=2}^{+\infty}{ r} \int_{2^{k}  {\tilde B}\setminus 2^{k-1} {\tilde B}}  {\abs{f(z,u)-f_{ {\tilde B}}}\over ||z|+|u|^{1/2}|^{n+3}}dzdu\\
&\le C \sum_{k=2}^{+\infty}{ r\over (2^{k+1}r)^{n+3}} \int_{2^k {\tilde B}}  {\abs{f(z,u)-f_{ {\tilde B}}}}dzdu\\
&\le C \sum_{k=2}^{+\infty}2^{-(k+1)}{ 1\over \abs{2^{k} {\tilde B}}} \int_{2^{k} {\tilde B}} \left(  {\abs{f(z,u)-f_{2^k {\tilde B}}}}+\sum_{l=2}^{k}\abs{f_{2^l {\tilde B}}-f_{2^{l-1} {\tilde B}}}\right)dzdu\\
&\le C\sum_{k=2}^{+\infty}2^{-(k+1)}{ 1\over \abs{2^{k} {\tilde B}}} \int_{2^{k} {\tilde B}} \left(  {\abs{f(z,u)-f_{2^k {\tilde B}}}}+2k\norm{f}_{BMO(\real^{n+1})}\right)dzdu\\
&\le C\sum_{k=2}^{+\infty}2^{-(k+1)}{(1+2k)\norm{f}_{BMO(\real^{n+1})}}\\
&\le  C\norm{f}_{BMO(\real^{n+1})},
\end{align*}
where $2^k{\tilde B}=B(x_0, 2^kr)\times[t_0-2^kr^2, t_0+2^kr^2]$.  Hence, we have $II\le C \norm{f}_{BMO(\real^{n+1})}.$
Then by the arbitrary of $x_0$ and $r>0$, we proved
$$\norm{T^*f}_{BMO(\mathbb R^{n+1})}\le C\norm{f}_{BMO(\mathbb R^{n+1})}.$$
For the second part of $(c)$, we can deduce it from the $BMO$-boundedness of $T^*$ and the inclusion  $L^\infty(\real^{n+1})\subset BMO(\real^{n+1}).$
This completes the proof of Theorem \ref{Thm:LpBoundOsci}.
\end{proof}

%%%%%%%%%%%%%%%%%%%%%%%%%%%%%%%%%%%%%%%%%%%%%%%%%%%%%%%%%%%%%%%%%%%

\section{Proof of Theorem \ref{Thm:ae}}\label{Sec:ae}
%%%%%%%%%%%%%%%%%%%%%%%%%%%%%%%%%%%%%%%%%%%%%%%%%%%%%%%%%%%%%%%%%%%
\begin{proof}[Proof of Theorem \ref{Thm:ae}]
First, we shall see that if $\varphi$ is a test function, then $T_N^\alpha \varphi(x,t)$ converges for all $(x,t)\in \real^{n+1}$.  In order to prove this, it is enough to see that for any  $(L,M)$ with $0<L<M$,  the  series
\begin{equation*}
 A= \sum_{j=L}^M v_j ( \P^\alpha_{a_{j+1}} \varphi(x, t) - \P_{a_j}^\alpha \varphi(x,t))
 \hbox{  and  }
  B= \sum_{j=-M}^{-L} v_j ( \P^\alpha_{a_{j+1}} \varphi(x,t) - \P_{a_j}^\alpha \varphi(x,t))
 \end{equation*}
converge to zero, when $L, M\rightarrow +\infty$.
Following the arguments in the proof of Theorem \ref{Thm:Maximalcontrol}, we have
\begin{align*}
|A|   & \le  C_{n,\alpha} \norm{v}_{\ell^\infty(\mathbb Z)} \int_{\real^{n+1}}\sum_{j=L}^{M} \abs{{1\over a_{j+1}^{2+n}}-{1\over a_{j}^{2+n}}} |\varphi(x-y,t-s)|dyds \\
&\le C_{n,\alpha, v, \rho} \int_{\real^{n+1}} \sum_{j=L}^{M} \frac{1}{a^{2+n}_j}  |\varphi(x-y,t-s)|dyds\\
&\le  C_{n,\alpha, v, \rho}\left({1\over a_L^{2+n}}\sum_{j=L}^{M} \frac{a_L^{2+n}}{a_j^{2+n}}\right)\int_{\real^{n+1}}  |\varphi(x-y,t-s)|dyds\\
&\le C_{n,\alpha,v, \rho} {\rho^{2+n}\over {\rho^{2+n}-1}}\|\varphi\|_{L^1(\real^{n+1})}{1\over a_L^{2+n}} \longrightarrow 0, \quad \hbox{as}\ { L,M \to +\infty}.
\end{align*}
On the other hand,  as the integral of the kernels are zero, we can write
\begin{align*}
B&=C_{n,\alpha} \int_{\real^{n+1}} \sum_{j=-M}^{-L}v_j \frac{ a_{j+1}^{2\a}e^{-a_{j+1}^2/(4 s)}-a_j^{2\a} e^{-a_j^2/(4 s)}}{s^{1+\a}} {e^{-{|y|^n/(4s)}}\over (4\pi s)^{n/2}}( \varphi(x-y, t-s)- \varphi(x,t))dyds\\
&= C_{n,\alpha} \int_0^1  \int_{\real^n}\sum_{j=-M}^{-L}v_j \frac{ a_{j+1}^{2\a}e^{-a_{j+1}^2/(4 s)}-a_j^{2\a} e^{-a_j^2/(4 s)}}{s^{1+\a}} {e^{-{|y|^n/(4s)}}\over (4\pi s)^{n/2}}( \varphi(x-y, t-s)- \varphi(x,t))dyds\\
&\quad +C_{n,\alpha}
 \int_1^\infty\int_{\real^n}\sum_{j=-M}^{-L}v_j \frac{ a_{j+1}^{2\a}e^{-a_{j+1}^2/(4 s)}-a_j^{2\a} e^{-a_j^2/(4 s)}}{s^{1+\a}} {e^{-{|y|^n/(4s)}}\over (4\pi s)^{n/2}}( \varphi(x-y, t-s)- \varphi(x,t))dyds\\
&=: B_1+B_2.  \end{align*}
Proceeding as in the case $A$, and by using the fact that $\varphi$ is a test function,  we have
\begin{align*}
|B_1|&=  C_{n,\alpha} \Big| \int_0^1  \int_{\real^n} \sum_{j=-M}^{-L}v_j \frac{ a_{j+1}^{2\a}e^{-a_{j+1}^2/(4 s)}-a_j^{2\a} e^{-a_j^2/(4 s)}}{s^{1+\a}} {e^{-{|y|^2/(4s)}}\over (4\pi s)^{n/2}}( \varphi(x-y, t-s)- \varphi(x,t))dyds\Big| \\
&\le  C_{n,\alpha} \Big| \int_0^1  \int_{\real^n} \sum_{j=-M}^{-L}v_j \frac{ a_{j+1}^{2\a}e^{-a_{j+1}^2/(4 s)}-a_j^{2\a} e^{-a_j^2/(4 s)}}{s^{1+\a}} {e^{-{|y|^2/(4s)}}\over (4\pi s)^{n/2}}\norm{\nabla\varphi}_{L^\infty(\real^{n+1})}(|y|+s^{1/2})dyds\Big| \\
&\le C_{n,\alpha} \norm{\nabla\varphi}_{L^\infty(\real^{n+1})} \int_0^{1}\int_{\real^n} \sum_{j=-M}^{-L}v_j \frac{ a_{j+1}^{2\a}e^{-a_{j+1}^2/(4 s)}-a_j^{2\a} e^{-a_j^2/(4 s)}}{s^{\a+1/2}}{e^{-{|y|^2/(4s)}}\over (4\pi s)^{n/2}} dyds \\
&\le C_{n,\alpha,v,\rho}  \norm{\nabla\varphi}_{L^\infty(\real^{n+1})} \int_0^1 \sum_{j=-M}^{-L}  \frac{a_{j}^{2\alpha} e^{-a_{j}^2/(4 s)}}{s^{\a+1/2}}  ds.
\end{align*}
If $\displaystyle 0<\alpha\le {1\over 2},$ then, for any $0<\varepsilon<2\alpha,$ we have
\begin{align*}
|B_1|&\le C_{n,\alpha, v,\rho} \norm{\nabla\varphi}_{L^\infty(\real^{n+1})}  \sum_{j=-M}^{-L}  {a_{j}^{2\alpha-\varepsilon}}\int_0^1{s^{\varepsilon/2-\a-1/2}}  ds\\
&\le C_{n,\alpha,v,\rho,\varepsilon}\norm{\nabla\varphi}_{L^\infty(\real^{n+1})} a_{-L}^{2\alpha-\varepsilon} \sum_{j=-M}^{-L} \frac{a_{j}^{2\alpha-\varepsilon}}{a_{-L}^{2\alpha-\varepsilon}} \\
&\le C_{n,\alpha,v,\rho,\varepsilon} \norm{\nabla\varphi}_{L^\infty(\real^{n+1})}a_{-L}^{2\alpha-\varepsilon}\longrightarrow 0,\quad  \hbox{as}\ { L,M \to +\infty}.
\end{align*}
If $\displaystyle {1\over 2}<\alpha<1,$  then
\begin{align*}
|B_1|&\le C_{n,\alpha, v,\rho} \norm{\nabla\varphi}_{L^\infty(\real^{n+1})}  \int_0^1 \sum_{j=-M}^{-L}  \frac{a_{j}^{2\alpha} e^{-a_{j}^2/(4 s)}}{s^{\a+1/2}}  ds\\
&\le C_{n,\alpha,v,\rho}\norm{\nabla\varphi}_{L^\infty(\real^{n+1})} a_{-L}^{2\alpha-1} \sum_{j=-M}^{-L} \frac{a_{j}^{2\alpha-1}}{a_{-L}^{2\alpha-1}} \int_0^1 \frac{1}{s^{\a}}  ds\\
&\le C_{n,\alpha,v,\rho} \norm{\nabla\varphi}_{L^\infty(\real^{n+1})}a_{-L}^{2\alpha-1}\longrightarrow 0,\quad   \hbox{as}\ { L,M \to +\infty}.
\end{align*}
Therefore, we get
$$|B_1|\longrightarrow 0,\quad  \hbox{as}\ { L,M \to +\infty}.$$
On the other hand,
\begin{align*}
|B_2|&\le C_{n,\alpha,\rho} \norm{v}_{\ell^\infty (\mathbb Z)} \|\varphi\|_{L^\infty(\real^{n+1}) }\int_1^\infty \sum_{j=-M}^{-L}\frac{ a_{j}^{2\a}}{s^{1+\a}}  ds \\
&\le  C_{n, \alpha, v, \rho} \|\varphi\|_{L^\infty(\real^{n+1}) }\sum_{j=-M}^{-L}{ a_j^{2\alpha}}\int_1^\infty {1\over s^{1+\a}}  ds \\
& \le C_{n, \alpha, v,  \rho} \|\varphi\|_{L^\infty(\real^{n+1}) }     a_{-L}^{2\alpha}\sum_{j=-M}^{-L}  {a_j^{2\alpha}\over  a_{-L}^{2\alpha}} \\
&\le C_{n, \alpha,   v, \rho}\|\varphi\|_{L^\infty(\real^{n+1}) }   {\rho^{2\alpha}\over \rho^{2\alpha}-1}a_{-L}^{2\alpha}   \longrightarrow 0,\quad  \hbox{as}\ { L,M \to +\infty}.
\end{align*}

As the set of test functions is dense in $L^p(\mathbb{R}^{n+1})$, by Theorem \ref{Thm:LpBoundOsci} we get the $a.e.$ convergence for any function in $L^p(\mathbb{R}^{n+1})$. Analogously, since $L^p(\mathbb{R}^{n+1}) \cap L^p(\mathbb{R}^{n+1}, \omega) $ is dense in $L^p(\mathbb{R}^{n+1}, \omega)$, we get the $a.e.$ convergence for functions in $L^p(\mathbb{R}^{n+1}, \omega) $ with $1\le p<\infty$. By using the dominated convergence theorem,  we can prove the convergence in $L^p(\mathbb{R}^{n+1}, \omega)$-norm for $1<p<\infty$, and also in measure.
\end{proof}

%%%%%%%%%%%%%%%%%%%%%%%%%%%%%%%%%%%%%%%%%%%%%%%%%%%%%%%%%%%%%%%%%%%
\section{Proof of Theorem \ref{Thm:LinfinityI}}
%%%%%%%%%%%%%%%%%%%%%%%%%%%%%%%%%%%%%%%%%%%%%%%%%%%%%%%%%%%%%%%%%%%

\begin{proof}[Proof of Theorem \ref{Thm:LinfinityI}]
 Let
 $$g(t)=\sum_{k\in \mathbb{Z}} (-1)^{k} \chi_{(-a^{2k+1}, -a^{2k}]}(t),\quad t\in \real,$$
 and
\begin{equation*}
f(x,t)=\chi_{\real^n}(x)g(t),\quad (x,t)\in \real^{n+1},
\end{equation*}
where $a>1$ is a real number that we shall fix it later. Then $f\in L^\infty(\real^{n+1}).$
It is easy to see that
\begin{equation*}
f(a^jx, a^{2j}t) = (-1)^j f(x,t).
\end{equation*}
Let $a_j= a^{j}.$ Then
\begin{align}\label{equ:dilation}
\mathcal{P}_{a_j}^\alpha f(x,t) &= \frac{1}{4^\alpha \Gamma(\alpha) }
\int_0^{\infty}\int_{\real^n} \frac{a^{2\alpha j} e^{-a^{2j}/(4s)}}{s^{1+\alpha}} {e^{-{|y|^2/(4s)}}\over (4\pi s)^{n/2}}f(x-y,t-s) dyds\nonumber\\
& =\frac{1}{4^\alpha \Gamma(\alpha) }\int_0^{\infty}\int_{\real^n} \frac{ e^{-1/(4u)}}{u^{\alpha}}{e^{-{|z|^2/(4u)}}\over (4\pi u)^{n/2}} f(x-a^jz,t-a^{2j}u)
     dz\frac{du}{u}\nonumber\\
     & =\frac{(-1)^j}{4^\alpha \Gamma(\alpha) }\int_0^{\infty}\int_{\real^n} \frac{ e^{-1/(4u)}}{u^{\alpha}}{e^{-{|z|^2/(4u)}}\over (4\pi u)^{n/2}} f\left({x\over a^j}-z,{t\over a^{2j}}-u\right)
     dz\frac{du}{u}\\
&=\frac{(-1)^j}{4^\alpha \Gamma(\alpha) }\int_0^{\infty}\int_{\real^n} \frac{ e^{-1/(4u)}}{u^{\alpha}}{e^{-{\abs{{x\over a^j}-z}^2/(4u)}}\over (4\pi u)^{n/2}} g\left({t\over a^{2j}}-u\right)dz\frac{du}{u}\nonumber\\
&=\frac{2(-1)^j}{4^\alpha \Gamma(\alpha) }\int_0^{\infty} \frac{ e^{-1/(4u)}}{u^{\alpha}} g\left({t\over a^{2j}}-u\right) \frac{du}{u}.\nonumber
\end{align}
Therefore,
\begin{align*}
\mathcal{P}_{a_j}^\alpha f(0,0)
&=\frac{2(-1)^j}{4^\alpha \Gamma(\alpha) }\int_0^{\infty} \frac{ e^{-1/(4u)}}{u^{\alpha}} g(-u)
     \frac{du}{u}.
\end{align*}
We observe that
\begin{eqnarray*}
\int_0^{\infty} \frac{ e^{-1/(4u)}}{u^{\alpha}} \big| g(-u)\big| \frac{du}{u} \le
\int_0^{\infty} \frac{ e^{-1/(4u)}}{u^{\alpha}} \frac{du}{u}=4^\alpha\Gamma(\alpha)< \infty.
\end{eqnarray*}
Hence $$\lim_{R\to {+\infty}} \int_R^{\infty} \frac{ e^{-1/(4u)}}{u^{\alpha}}  g(-u)\frac{du}{u} =0\quad  \hbox{  and  } \quad \lim_{\varepsilon \to 0^+} \int_0^\varepsilon \frac{ e^{-1/(4u)}}{u^{\alpha}}  g(-u)\frac{du}{u} =0. $$
On the other hand, $\displaystyle \lim_{a\to {+\infty}} \int_1^a \frac{ e^{-1/(4u)}}{u^{\alpha}}  f(-u)\frac{du}{u}  = \displaystyle \lim_{a\to {+\infty}} \int_1^a \frac{ e^{-1/(4u)}}{u^{\alpha}} \frac{du}{u} =C>0.$ Hence we can choose $a>1$ big enough such that
 \begin{multline*}
 \int_1^a \frac{ e^{-1/(4u)}}{u^{\alpha}}  g(-u)\frac{du}{u} = \int_1^a \frac{ e^{-1/(4u)}}{u^{\alpha}}  \frac{du}{u}  >  \abs{\int_0^{1/a} \frac{ e^{-1/(4u)}}{u^{\alpha}} \frac{du}{u}} + \abs{\int_{a^2}^{+\infty} \frac{ e^{-1/(4u)}}{u^{\alpha}} \frac{du}{u}}\\
 >  \abs{\int_0^{1/a} \frac{ e^{-1/(4u)}}{u^{\alpha}}g(-u) \frac{du}{u}} + \abs{\int_{a^2}^{+\infty} \frac{ e^{-1/(4u)}}{u^{\alpha}}g(-u) \frac{du}{u}}.
\end{multline*}
In other words, with the $a>1$ fixed above,  there exists constant $C_1>0$ such that
\begin{equation}\label{equ:cons1}
\int_0^{+\infty} \frac{ e^{-1/(4u)}}{u^{\alpha}}  g(-u)\frac{du}{u}=C_1.
\end{equation}
Hence
\begin{equation*}
\Big|\mathcal{P}_{a_j}^\alpha f(0,0)- \mathcal{P}_{a_{j+1}}^\alpha f(0,0)\Big| = \frac{ 4 C_1}{4^\alpha \Gamma(\alpha)}>0.
\end{equation*}
Therefore we have   $$\sum_{j\in \mathbb Z} \Big|\mathcal{P}_{a_{j+1}}^\alpha f(0,0)- \mathcal{P}_{a_{j}}^\alpha f(0,0)\Big|  = \infty.$$
By  \eqref{equ:dilation}, we get
\begin{align}\label{equ:diffP}
&{\mathcal{P}_{a_{j+1}}^\alpha f(x,t)-\mathcal{P}_{a_j}^\alpha f(x,t)}\nonumber\\
& =
\frac{ (-1)^{j+1}}{4^\alpha \Gamma(\alpha) }
\Big\{\int_0^{+\infty}\frac{ e^{-1/(4u)}}{u^{\alpha}} g\left({t\over {a^{2(j+1)}}}-u\right) \frac{du}{u}+\int_0^{+\infty}\frac{ e^{-1/(4u)}}{u^{\alpha}} g\left({t\over {a^{2j}}}-u\right) \frac{du}{u}\Big\}.
\end{align}
By the dominated convergence theorem, we know that
\begin{equation*}
\lim_{h\to 0}\int_0^{+\infty}\frac{ e^{-1/(4u)}}{u^{\alpha}} g(h-u) \frac{du}{u}=\int_0^{+\infty}\frac{ e^{-1/(4u)}}{u^{\alpha}} g(-u) \frac{du}{u}  = C_1>0,
\end{equation*}
where $C_1$ is the constant appeared in \eqref{equ:cons1}.
So, there exists $0<\eta_0<1,$ such that, for $|h|<\eta_0,$
\begin{equation*}
\int_0^{+\infty}\frac{ e^{-1/(4u)}}{u^{\alpha}} g(h-u) \frac{du}{u}\ge {1\over 2}\int_0^{+\infty}\frac{ e^{-1/(4u)}}{u^{\alpha}} g(-u) \frac{du}{u} = \frac{C_1}{2}.
\end{equation*}
Then, for each $t\in \real$, we can choose $j\in \mathbb Z$  such that $\displaystyle {|t|\over {a^j}}<\eta_0$  (there are infinite $j$ satisfying this condition), and we have
\begin{equation*}
\int_0^{+\infty}\frac{ e^{-1/(4u)}}{u^{\alpha}} g\left({t\over {a^{2(j+1)}}}-u\right) \frac{du}{u}+\int_0^{+\infty}\frac{ e^{-1/(4u)}}{u^{\alpha}} g\left({t\over {a^{2j}}}-u\right) \frac{du}{u} \ge C_1>0.
\end{equation*}
Choosing $v_j=(-1)^{j+1},\ j\in \mathbb Z$, by \eqref{equ:diffP} we have, for any $t\in \mathbb R,$
\begin{align*}
T^* f(x,t)&\ge \sum_{\abs{t\over {a^j}}<\eta_0} (-1)^{j+1}\big(\P_{a_{j+1}}^\alpha f(x,t)-\P_{a_j}^\alpha f(x,t)\big)\\
&=\frac{ 2}{4^\alpha \Gamma(\alpha) }\sum_{\abs{t\over {a^j}}<\eta_0}
\left({\int_0^{+\infty}\frac{ e^{-1/(4u)}}{u^{\alpha}} g\left({t\over {a^{2(j+1)}}}-u\right) \frac{du}{u}+\int_0^{+\infty}\frac{ e^{-1/(4u)}}{u^{\alpha}} g\left({t\over {a^{2j}}}-u\right) \frac{du}{u}}\right)\\
&=\infty.
\end{align*}
We complete the proof of Theorem \ref{Thm:LinfinityI}.
\end{proof}

%%%%%%%%%%%%%%%%%%%%%%%%%%%%%%%%%%%%%%%%%%%%%%%%%%%%%%%%%%%%%%%%%%%

\section{Local growth of the operator $T^*$ }\label{Sec:L00}

%%%%%%%%%%%%%%%%%%%%%%%%%%%%%%%%%%%%%%%%%%%%%%%%%%%%%%%%%%%%%%%%%%%

\begin{proof}[Proof of Theorem \ref{Thm:GrothLinfinity}.]
 For $(a)$, we will prove it only in the case $1<p<\infty.$  Since $2r<1,$ we know that $\tilde B\backslash \tilde B_{2r}\neq \emptyset.$ Let $f(x,t)=f_1(x,t)+f_2(x,t)$, where $f_1(x,t)=f(x,t)\chi_{\tilde B_{2r}}(x,t)$ and $f_2(x,t)=f(x,t)\chi_{\tilde B\backslash \tilde  B_{2r}}(x,t)$. Then
$$\abs{T^* f(x,t)}\le \abs{T^* f_1(x,t)}+\abs{T^* f_2(x,t)}.$$
By Theorem \ref{Thm:LpBoundOsci}, we have
\begin{multline*}
\frac{1}{|\tilde B_r|} \int_{\tilde B_r} \abs{T^* f_1 (x,t)} dx\le \left(\frac{1}{|\tilde B_r|} \int_{\tilde B_r} \abs{T^* f_1 (x,t)}^2 dxdt\right)^{1/2}\\
\le C \left(\frac{1}{|\tilde B_r|} \int_{\mathbb{R}^{n+1}}\abs{ f_1 (x,t)}^2 dxdt\right)^{1/2}\le C\norm{f}_{L^\infty(\mathbb{R}^{n+1})}.
\end{multline*}
Also, we have
\begin{multline}\label{equ:cosntant}
\int_{\real^{n+1}_+}\abs{\frac{ a_{j+1}^{2\a}e^{-a_{j+1}^2/(4 s)}-a_j^{2\a} e^{-a_j^2/(4 s)}}{s^{1+\a}}} {e^{-|y|^2/(4s)}\over (4\pi s)^{n/2}}dyds\\
\le \int_{\real^{n+1}_+}{\frac{ a_{j+1}^{2\a}e^{-a_{j+1}^2/(4 s)}+a_j^{2\a} e^{-a_j^2/(4 s)}}{s^{1+\a}}} {e^{-|y|^2/(4s)}\over (4\pi s)^{n/2}}dyds= 4^{1+\alpha}\Gamma(\alpha).
\end{multline}
Then, by H\"older's inequality, \eqref{equ:cosntant}  and Fubini's Theorem, for $1< p < \infty$ and any $N=(N_1, N_2)$, we have
\begin{align*}\label{equ:Palpha}
&\abs{\sum_{j=N_1}^{N_2}v_j\left(\P_{a_{j+1}}^\a f_2(x,t)-\P_{a_j}^\a f_2(x,t)\right)}\nonumber\\
&\le C\sum_{j=N_1}^{N_2} \abs{v_j \int_{\real^{n+1}_+}\frac{ a_{j+1}^{2\a}e^{-a_{j+1}^2/(4 s)}-a_j^{2\a} e^{-a_j^2/(4 s))}}{s^{1+\a}}{e^{-|y|^2/(4s)}\over (4\pi s)^{n/2}} f_2(x-y, t-s)~ dyds}\nonumber\\
&\le C\norm{v}_{l^p(\mathbb Z)}\left( \sum_{j=N_1}^{N_2}\left(\int_{\real^{n+1}_+}\abs{\frac{ a_{j+1}^{2\a}e^{-a_{j+1}^2/(4 s)}-a_j^{2\a} e^{-a_j^2/(4 s)}}{s^{1+\a}}  } {e^{-|y|^2/(4s)}\over (4\pi s)^{n/2}}\abs{f_2(x-y,t-s)}~ dyds\right)^{p'}\right)^{1/p'} \nonumber \\
&\le C\norm{v}_{l^p(\mathbb Z)}\Big(\sum_{j=N_1}^{N_2} \Big\{\int_{\real^{n+1}_+}\abs{\frac{ a_{j+1}^{2\a}e^{-a_{j+1}^2/(4 s)}-a_j^{2\a} e^{-a_j^2/(4 s)}}{s^{1+\a}}  } {e^{-|y|^2/(4s)}\over (4\pi s)^{n/2}}\abs{f_2(x-y, t-s)}^{p'}~ dyds\Big\}  \\
 & \quad \quad  \times  \Big\{\int_{\real^{n+1}_+}\abs{\frac{ a_{j+1}^{2\a}e^{-a_{j+1}^2/(4 s)}-a_j^{2\a} e^{-a_j^2/(4 s)}}{s^{1+\a}}} {e^{-|y|^2/(4s)}\over (4\pi s)^{n/2}}dyds  \Big\}^{p'/p} \Big)^{1/p'}
\nonumber \\
&\le C \norm{v}_{l^p(\mathbb Z)}\left(\sum_{j=N_1}^{N_2} \int_{\real^{n+1}_+}\abs{\frac{ a_{j+1}^{2\a}e^{-a_{j+1}^2/(4 s)}-a_j^{2\a} e^{-a_j^2/(4 s)}}{s^{1+\a}}  }{e^{-|y|^2/(4s)}\over (4\pi s)^{n/2}} \abs{f_2(x-y, t-s)}^{p'}~ dyds   \right)^{1/p'}
\nonumber \\
&\le C \norm{v}_{l^p(\mathbb Z)}\left(\int_{\real^{n+1}_+}\sum_{j=-\infty}^{+\infty} \abs{\frac{ a_{j+1}^{2\a}e^{-a_{j+1}^2/(4 s)}-a_j^{2\a} e^{-a_j^2/(4 s)}}{s^{1+\a}}  } {e^{-|y|^2/(4s)}\over (4\pi s)^{n/2}} \abs{f_2(x-y, t-s)}^{p'}~ dyds  \right)^{1/p'}
\nonumber \\ \nonumber
&\le C \norm{v}_{l^p(\mathbb Z)}\left(\int_{\real^{n+1}_+} \frac1{|s|} {e^{-|y|^2/(4s)}\over (4\pi s)^{n/2}} \abs{f_2(x-y, t-s)}^{p'}~ dyds  \right)^{1/p'}.
\end{align*}
 Then,  we get
\begin{align*}
&\frac{1}{|\tilde B_r|} \int_{\tilde B_r} \abs{T^* f_2(x,t)}dxdt \\
&\le  C\frac{1}{|\tilde B_r|} \int_{\tilde B_r} \left(\int_{\real^{n+1}_+} \frac1{|s|} {e^{-|y|^2/(4s)}\over (4\pi s)^{n/2}} \abs{f_2(x-y, t-s)}^{p'}~ dyds \right)^{1/p'}dxdt \\
&=  C\frac{1}{|\tilde B_r|} \int_{\tilde B_r} \left(\int_{\real^{n+1}_+} \frac1{|t-s|} {e^{-|x-y|^2/(4(t-s))}\over (4\pi (t-s))^{n/2}}\abs{f_2(y,s)}^{p'}~ dyds  \right)^{1/p'}dxdt \\
&\le C\frac{\norm{f}_{L^\infty(\real^{n+1})}}{|\tilde B_r|} \int_{\tilde B_r} \left(\int_{r\le |t-s| \le 2} \frac1{|t-s|} ~ ds  \right)^{1/p'}dxdt \\
 &\sim \Big(\log\frac{2}{r}\Big)^{1/p'}\norm{f}_{L^\infty(\real^{n+1})}.
\end{align*}
In the above inequalities, we have used that, if $(y,s)\in \tilde B\backslash \tilde B_{2r}$ and $(x,t)\in \tilde B_r$,  then $r\le |t-s|\le 2$.
Hence, $$\frac{1}{|\tilde B_r|} \int_{\tilde B_r} \abs{T^* f(x,t)}dxdt\le C\left(1+\Big(\log\frac{2}{r}\Big)^{1/p'}\right)\norm{f}_{L^\infty(\real^{n+1})}  \le C\Big(\log\frac{2}{r}\Big)^{1/p'}\norm{f}_{L^\infty(\real^{n+1})}.$$
For the case $p=1$ and $p=\infty$, the proof is similar and easier. Then we get the proof of $(a).$

For $(b),$ we will only consider the two dimensional case. When $1< p<\infty,$ for any $0<\varepsilon<p-1$, let
\begin{equation*}
f(x,t) = \sum_{k=-\infty}^{0} (-1)^{k} \chi_{(-a^k,-a^{k-1}]\times(-a^{2k}, -a^{2k-1}]}(x,t)\quad \hbox{and }\quad a_j=a^{j}
\end{equation*}
with $a>1$ being fixed later. Then, the support of $f$ is contained in $(-1, 0)\times (-1, 0),$ and $\{a_j\}_{j\in \mathbb Z}$ is a $\rho$-lacunary sequence with $\rho=a>1.$ We observe that
\begin{multline*}
\abs{\int_0^{+\infty}\int_\real \frac{ e^{-1/(4u)}}{u^{\alpha}}  \frac{e^{-{z^2\over 4u}}}{u^{1/2}} f(-z,-u) dz\frac{du}{u}}=\abs{\int_0^{+\infty}\int_0^{+\infty} \frac{ e^{-1/(4u)}}{u^{\alpha}}  \frac{e^{-{z^2\over 4u}}}{u^{1/2}} f(-z,-u) dz\frac{du}{u}}\\
\le\int_0^{+\infty}\int_0^{+\infty} \frac{ e^{-1/(4u)}}{u^{\alpha}} \frac{e^{-{z^2\over 4u}}}{u^{1/2}}dz\frac{du}{u}=\sqrt \pi 4^\alpha\Gamma(\alpha)< \infty.
\end{multline*}
Also, we have  $$\lim_{R\to {+\infty}} \int_R^{+\infty} \int_0^{+\infty}\frac{ e^{-1/(4u)}}{u^{\alpha}}  \frac{e^{-{z^2\over 4u}}}{u^{1/2}}dz\frac{du}{u} =\sqrt \pi \lim_{R\to {+\infty}} \int_R^{+\infty} \frac{ e^{-1/(4u)}}{u^{\alpha}}  \frac{du}{u}=0$$
and
 $$\lim_{\varepsilon\to 0^+} \int_0^\varepsilon \int_0^{+\infty} \frac{ e^{-1/(4u)}}{u^{\alpha}} \frac{e^{-{z^2\over 4u}}}{u^{1/2}}dz\frac{du}{u} =0. $$
 Hence, we can choose $a>1$ big enough such that
 \begin{align*}
&\int_{a^{-1}}^1\int_{a^{-1}}^1 \frac{ e^{-1/(4u)}}{u^{\alpha}}  \frac{e^{-{z^2\over 4u}}}{u^{1/2}} f(-z,-u) dz\frac{du}{u} = \int_{a^{-1}}^1\int_{a^{-1}}^1\frac{ e^{-1/(4u)}}{u^{\alpha}} \frac{e^{-{z^2\over 4u}}}{u^{1/2}}  dz \frac{du}{u}\\
& \ge 5 \left(\int_0^{1/a^2}\int_0^{+\infty} \frac{ e^{-1/(4u)}}{u^{\alpha}}\frac{e^{-{z^2\over 4u}}}{u^{1/2}}dz \frac{du}{u}+\int_{a-1}^{+\infty}\int_{0}^{+\infty} \frac{ e^{-1/(4u)}}{u^{\alpha}} \frac{e^{-{z^2\over 4u}}}{u^{1/2}}dz\frac{du}{u}\right)\nonumber\\
& >  5 \left( \abs{\int_0^{1/a^2} \int_0^{+\infty} \frac{ e^{-1/(4u)}}{u^{\alpha}}\frac{e^{-{z^2\over 4u}}}{u^{1/2}}f(-z,-u)dz \frac{du}{u}}+\abs{\int_{a-1}^{+\infty}\int_{0}^{+\infty} \frac{ e^{-1/(4u)}}{u^{\alpha}} \frac{e^{-{z^2\over 4u}}}{u^{1/2}}f(-z,-u)dz\frac{du}{u}}\right).
\end{align*}
We should note that
\begin{align*}
&\int_0^{+\infty}\int_0^{+\infty} \frac{ e^{-1/(4u)}}{u^{\alpha}} \frac{e^{-{z^2\over 4u}}}{u^{1/2}}f(-z,-u)dz\frac{du}{u}\\
&= \int_{a^{-1}}^1\int_{a^{-1}}^1\frac{ e^{-1/(4u)}}{u^{\alpha}} \frac{e^{-{z^2\over 4u}}}{u^{1/2}} f(-z,-u) dz \frac{du}{u}+{\int_0^{1/a^2}\int_0^{a^{-1}} \frac{ e^{-1/(4u)}}{u^{\alpha}}\frac{e^{-{z^2\over 4u}}}{u^{1/2}}f(-z,-u)dz \frac{du}{u}}\\
&\quad +{\int_{a}^{+\infty}\int_{1}^{+\infty} \frac{ e^{-1/(4u)}}{u^{\alpha}} \frac{e^{-{z^2\over 4u}}}{u^{1/2}}f(-z,-u)dz\frac{du}{u}}\\
&\ge \int_{a^{-1}}^1\int_{a^{-1}}^1\frac{ e^{-1/(4u)}}{u^{\alpha}} \frac{e^{-{z^2\over 4u}}}{u^{1/2}}  dz \frac{du}{u}-\int_0^{1/a^2}\int_0^{a^{-1}} \frac{ e^{-1/(4u)}}{u^{\alpha}}\frac{e^{-{z^2\over 4u}}}{u^{1/2}}dz \frac{du}{u}\\
&\quad -\int_{a}^{+\infty}\int_{1}^{+\infty} \frac{ e^{-1/(4u)}}{u^{\alpha}} \frac{e^{-{z^2\over 4u}}}{u^{1/2}}dz\frac{du}{u}\\
 &\ge \int_{a^{-1}}^1\int_{a^{-1}}^1\frac{ e^{-1/(4u)}}{u^{\alpha}} \frac{e^{-{z^2\over 4u}}}{u^{1/2}}  dz \frac{du}{u}-\int_0^{1/a^2}\int_0^{+\infty} \frac{ e^{-1/(4u)}}{u^{\alpha}}\frac{e^{-{z^2\over 4u}}}{u^{1/2}}dz \frac{du}{u}\\
&\quad -\int_{a-1}^{+\infty}\int_{0}^{+\infty} \frac{ e^{-1/(4u)}}{u^{\alpha}} \frac{e^{-{z^2\over 4u}}}{u^{1/2}}dz\frac{du}{u}>0.
\end{align*}
Therefore, there exists a constant $C_1>0$ such that
\begin{equation}\label{equ:cons2}
\int_0^{+\infty}\int_0^{+\infty} \frac{ e^{-1/(4u)}}{u^{\alpha}} \frac{e^{-{z^2\over 4u}}}{u^{1/2}}f(-z,-u)dz\frac{du}{u}=C_1>0
\end{equation}
and
\begin{equation}\label{equ:cons33}
0<\int_0^{1/a^2}\int_0^{+\infty} \frac{ e^{-1/(4u)}}{u^{\alpha}}\frac{e^{-{z^2\over 4u}}}{u^{1/2}}dz \frac{du}{u}+\int_{a-1}^{+\infty}\int_{0}^{+\infty} \frac{ e^{-1/(4u)}}{u^{\alpha}} \frac{e^{-{z^2\over 4u}}}{u^{1/2}}dz\frac{du}{u}\le {C_1\over 4}.
\end{equation}
On the other hand,
by the dominated convergence theorem, we have
\begin{multline*}
\lim_{h\to 0}\int_0^{+\infty}\int_0^{+\infty}\frac{ e^{-1/(4u)}}{u^{\alpha}} \frac{e^{-{z^2\over 4u}}}{u^{1/2}}f(h-z,h-u)dz\frac{du}{u}\\
=\int_0^{+\infty}\int_0^{+\infty}\frac{ e^{-1/(4u)}}{u^{\alpha}} \frac{e^{-{z^2\over 4u}}}{u^{1/2}}f(-z,-u)dz\frac{du}{u} = C_1>0,
\end{multline*}
where $C_1$ is the constant appeared in \eqref{equ:cons2}.
So, there exists $0<\eta_0<1,$ such that, for $|h|<\eta_0,$
\begin{equation}\label{eq:bigC}
\int_0^{+\infty}\int_0^{+\infty}\frac{ e^{-1/(4u)}}{u^{\alpha}} \frac{e^{-{z^2\over 4u}}}{u^{1/2}}f(h-z,h-u)dz\frac{du}{u}\ge \frac{C_1}{2}.
\end{equation}

It can be checked that
$$f(a^jx, a^{2j}t) = (-1)^j f(x,t)+(-1)^j\sum_{k=1}^{-j}(-1)^k \chi_{(-a^k, -a^{k-1}]\times (-a^{2k}, -a^{2k-1}]}(x,t),$$
when $j\le 0.$ We will always assume $j\le 0$ in the following.
By changing variable,
\begin{align*}
\mathcal{P}_{a_j}^\alpha f(x,t) &=
C_\alpha \int_0^{+\infty}\int_\real\frac{ e^{-1/(4u)}}{u^{\alpha}} \frac{e^{-{z^2\over 4u}}}{u^{1/2}} f(x-a^jz,t-a^{2j}u)dz \frac{du}{u}\\
& ={(-1)^j }C_\alpha
\int_0^{+\infty}\int_\real \frac{ e^{-1/(4u)}}{u^{\alpha}}\frac{e^{-{z^2\over 4u}}}{u^{1/2}} \Big\{ f\left({x\over a^j}-z,{t\over a^{2j}}-u\right)\\
&\quad +\sum_{k=1}^{-j}(-1)^k \chi_{(-a^k, -a^{k-1}]\times (-a^{2k}, -a^{2k-1}]}\left({x\over a^j}-z,{t\over a^{2j}}-u\right) \Big\}dz\frac{du}{u}.
\end{align*}
Then
\begin{align}\label{equ:integ}
&{\mathcal{P}_{a_{j+1}}^\alpha f(x,t)-\mathcal{P}_{a_j}^\alpha f(x,t)}\nonumber\\
& =
{ (-1)^{j+1}}C_\alpha
\Big\{\, \int_0^{+\infty}\int_\real \frac{ e^{-1/(4u)}}{u^{\alpha}}\frac{e^{-{z^2\over 4u}}}{u^{1/2}}  f\left({x\over a^{j+1}}-z,{t\over {a^{2(j+1)}}}-u\right)  dz\frac{du}{u}\nonumber \\
&\quad +\int_0^{+\infty}\int_\real \frac{ e^{-1/(4u)}}{u^{\alpha}}\frac{e^{-{z^2\over 4u}}}{u^{1/2}}  f\left({x\over a^{j}}-z,{t\over {a^{2j}}}-u\right)dz\frac{du}{u} \\
&\quad +\int_0^{+\infty}\int_\real \frac{ e^{-1/(4u)}}{u^{\alpha}} \frac{e^{-{z^2\over 4u}}}{u^{1/2}} \sum_{k=1}^{-j-1}(-1)^k \chi_{(-a^k, -a^{k-1}]\times(-a^{2k}, -a^{2k-1}]}\left({x\over a^{j+1}}-z,{t\over a^{2j+2}}-u\right)\frac{du}{u}\nonumber\\
&\quad +\int_0^{+\infty}\int_\real\frac{ e^{-1/(4u)}}{u^{\alpha}}\frac{e^{-{z^2\over 4u}}}{u^{1/2}} \sum_{k=1}^{-j}(-1)^k \chi_{(-a^k, -a^{k-1}]\times(-a^{2k}, -a^{2k-1}]}\left({x\over a^{j}}-z, {t\over a^{2j}}-u\right) \frac{du}{u}\, \Big\}.\nonumber
\end{align}
For given $\eta_0$ as above, let $2r<1$ such that  $r< \eta_0^2$   and $r \sim  a^{2J_0}\eta_0$ for a certain negative integer $J_0$.  If $J_0\le j\le 0$, we have   $\displaystyle {r\over a^{2j}} <\eta_0$.
And,  for any $-r\le x, t\le r$  we have
$$\displaystyle -1\cdot \chi_{(0, +\infty)\times [a-1,+\infty)}(z, u)\le \sum_{k=1}^{-j-1}(-1)^k\chi_{(-a^k, -a^{k-1}]\times(-a^{2k}, -a^{2k-1}]}\left({x\over a^{j+1}}-z,{t\over a^{2j+2}}-u\right)$$
 and
  $$\displaystyle -1\cdot \chi_{(0, +\infty)\times [a-1,+\infty)}(u) \le \sum_{k=1}^{-j} (-1)^k\chi_{(-a^k, -a^{k-1}]\times(-a^{2k}, -a^{2k-1}]}\left({x\over a^{j}}-z,{t\over a^{2j}}-u\right). $$
Hence, for the third and fourth integrals in \eqref{equ:integ}, by \eqref{equ:cons33} we have
\begin{align} \label{equ:large}
&\int_0^{+\infty}\int_\real \frac{ e^{-1/(4u)}}{u^{\alpha}} \frac{e^{-{z^2\over 4u}}}{u^{1/2}} \sum_{k=1}^{-j-1}(-1)^k \chi_{(-a^k, -a^{k-1}]\times(-a^{2k}, -a^{2k-1}]}\left({x\over a^{j+1}}-z,{t\over a^{2j+2}}-u\right)\frac{du}{u}+\nonumber\\
& \int_0^{+\infty}\int_\real\frac{ e^{-1/(4u)}}{u^{\alpha}}\frac{e^{-{z^2\over 4u}}}{u^{1/2}} \sum_{k=1}^{-j}(-1)^k \chi_{(-a^k, -a^{k-1}]\times(-a^{2k}, -a^{2k-1}]}\left({x\over a^{j}}-z, {t\over a^{2j}}-u\right) \frac{du}{u}\nonumber\\
&\quad \quad \quad \quad\quad \quad\quad \quad\quad \quad \quad \quad\quad \quad \quad \quad \quad \quad\ge (-2)\int_{a-1}^{+\infty}\int_{0}^{+\infty}\frac{ e^{-1/(4u)}}{u^{\alpha}}\frac{e^{-{z^2\over 4u}}}{u^{1/2}}dz\frac{du}{u}\ge -{C_1\over 2}.
\end{align}
So, for any $(x,t)\in [-r, r]\times [-r, r]$ and $J_0\le j\le 0$, combining \eqref{equ:integ},  \eqref{eq:bigC} and \eqref{equ:large},  we have
\begin{align*}
\abs{\mathcal{P}_{a_{j+1}}^\alpha f(x,t)-\mathcal{P}_{a_j}^\alpha f(x,t)} \ge C_\alpha \cdot\left( C_1-{C_1\over 2}\right)=C\cdot C_1>0.
\end{align*}
We choose the sequence  $\{v_j\}_{j\in \mathbb Z} \in \ell^p(\mathbb Z)$ given by  $\displaystyle v_j=(-1)^{j+1}(-j)^{-{1\over p-\varepsilon}}$, then  for  $N=(J_0, 0),$   we have
\begin{multline*}
 \frac1{4r^2}\int_{[-r,r]}\int_{[-r,r]} \abs{T^* f(x,t)} dx dt \ge \frac1{4r^2}\int_{[-r,r]} \abs{T_N^\alpha f(x, t)} dxdt \\
 \ge  C_\alpha \frac 1{4r^2}\int_{[-r,r]}\int_{[-r,r]}  \sum_{j =J_0}^{-1} \left(C\cdot C_1\cdot (-j)^{-{1\over p-\varepsilon}}\right) dxdt\\ \ge C_{p,\varepsilon,\alpha}\cdot C_1\cdot (-J_0)^{1\over {(p-\varepsilon)'}}  \sim \left(\log \frac 2{r}\right)^{1\over {(p-\varepsilon)'}}.
 \end{multline*}

For $(c)$,  let $v_j=(-1)^{j+1}$,  $a_j=a^{j}$ with $a>1$  and  $0<\eta_0<1$ fixed in the proof of $(b)$.
Consider the same function $f$ as in $(b).$  Then, $\norm{v}_{\ell^\infty(\mathbb Z)}=1$ and $\norm{f}_{L^\infty(\mathbb R^2)}=1.$
By the same argument as in $(b)$, with $N=(J_0, 0)$ and $0<\alpha<1$,  we have
\begin{multline*}
\frac1{4r^2}\int_{[-r,r]}\int_{[-r,r]} \abs{T^* f(x,t)} dxdt\ge \frac1{4r^2}\int_{[-r,r]}\int_{[-r,r]} \abs{T_N^\alpha f(x,t)} dxdt \\
\ge  C_\alpha \frac 1{4r^2}\int_{[-r,r]} \int_{[-r,r]} \sum_{j = J_0}^{0} C_1 dxdt \ge  C_\alpha C_1 \cdot (-J_0)  \sim \log \frac 2{r}.
\end{multline*}
\end{proof}

\vspace{1em}

\noindent{\bf Acknowledgments.}   The author is grateful to Professors J. L. Torrea and T. Ma for their  helpful discussions.

\vspace{3em}

%%%%%%%%%%%%%%%%%%%%%%%%%%%%%%%%%%%%%%%%%%%%%%%%%%%%%%

\end{document}